\newcommand{\ts}{\textstyle}
\newcommand{\sss}{\scriptscriptstyle}
\newcommand{\ges}{{\sss\geqslant}}
\newcommand{\ann}{\operatorname{Ann}}
\newcommand{\bwedge}{{\textstyle\bigwedge}}
\newcommand{\coker}{\operatorname{Coker}}
\newcommand{\bsx}{{\boldsymbol x}}
\newcommand{\fm}{{\mathfrak m}}
\newcommand{\fn}{{\mathfrak n}}
\newcommand{\fp}{{\mathfrak p}}
\newcommand{\fq}{{\mathfrak q}}
\newcommand{\Ass}{\operatorname{Ass}}
\newcommand{\col}{\colon}
\newcommand{\dd}{\partial}
\newcommand{\depth}{\operatorname{depth}}
\newcommand{\gdim}{\operatorname{G-dim}}
\newcommand{\grade}{\operatorname{grade}}
\newcommand{\bigrade}[2][{}]{\operatorname{bigrade}({#2}_{#1})}
\newcommand{\fd}{\operatorname{fd}}
\newcommand{\height}{{\operatorname{height}}}
\newcommand{\hcoh}[4][*]{\operatorname{HH}^{#1}(#3\var #2;#4)}
\newcommand{\env}[2][{}]{{#2}{}^{\mathsf e}_{#1}}
\newcommand{\envv}[2]{{#1}_{#2}^{\mathsf e}}
\newcommand{\hh}[1]{\operatorname{H}(#1)}
\newcommand{\HH}[2]{\operatorname{H}_{#1}(#2)}
\newcommand{\CH}[2]{\operatorname{H}^{#1}(#2)}
\newcommand{\id}{\operatorname{id}}
\newcommand{\Ker}{\operatorname{Ker}}
\newcommand{\length}{\operatorname{length}}
\newcommand{\lra}{\longrightarrow}
\newcommand{\Max}{\operatorname{Max}}
\newcommand{\Min}{\operatorname{Min}}
\newcommand{\pd}{\operatorname{pd}}
\newcommand{\rank}{\operatorname{rank}}
\newcommand{\Shift}{\mathsf{\Sigma}}
\newcommand{\Spec}{\operatorname{Spec}}
\newcommand{\Supp}{\operatorname{Supp}}
\newcommand{\Ext}[4]{\operatorname{Ext}^{#1}_{#2}(#3,#4){}}
\newcommand{\Hom}[3]{\operatorname{Hom}_{#1}(#2,#3)}
\newcommand{\Rhom}[3]{\operatorname{\mathsf{R}Hom}_{#1}(#2,#3)}
\newcommand{\dtensor}[1]{\otimes^{\mathsf{L}}_{#1}}
\newcommand{\dcat}[1][R]{{\mathsf D}(#1)}
\newcommand{\trdeg}{\operatorname{tr\,deg}}
\newcommand{\rtd}[2]{\operatorname{tr\,deg}_{#1}k({#2})}
\newcommand{\var}{{\hskip.7pt\vert\hskip.7pt}}
\newcommand{\vf}{{\varphi}}
\newcommand{\xra}{\xrightarrow}
\newcommand{\ZZ}{\operatorname{Z}}
\newcommand{\BN}{{\mathbb N}}
\newcommand{\BQ}{{\mathbb Q}}
\newcommand{\BZ}{{\mathbb Z}}
\theoremstyle{plain}
\newtheorem{theorem}{Theorem}[section]
\newtheorem{proposition}[theorem]{Proposition}
\newtheorem{lemma}[theorem]{Lemma}
\newtheorem{corollary}[theorem]{Corollary}
\newtheorem{itheorem}{Theorem}
\theoremstyle{definition}
\newtheorem{example}[theorem]{Example}
\newtheorem{chunk}[theorem]{}
\theoremstyle{remark}
\newtheorem{remark}[theorem]{Remark}
\newtheorem*{Claim}{Claim}
\newtheorem*{Conjecture}{Conjecture}
\numberwithin{equation}{theorem}
\begin{document}

\title[Gorenstein algebras] {Gorenstein algebras and Hochschild cohomology}

\author{Luchezar L.~Avramov} \address{Department of Mathematics, University of Nebraska,
  Lincoln, NE 68588, U.S.A.}  \email {avramov@math.unl.edu}

\author{Srikanth B.~Iyengar} \address{Department of Mathematics, University of Nebraska,
  Lincoln, NE 68588, U.S.A.}  \email{iyengar@math.unl.edu}

\thanks{Research partly supported by NSF grants DMS 0201904 (LLA) and DMS 0602498 (SI)}

\keywords{Hochschild cohomology, Gorenstein algebras}

\subjclass[2000]{Primary 13D03, 14B25. Secondary 14M05, 16E40}

\date{\today}

\begin{abstract}
  Links are established between properties of the fibers of a flat,
  essentially of finite type homomorphism of commutative rings
  $\sigma\colon K\to S$, and the structure of the graded $S$-module
  $\operatorname{Ext}_{S\otimes_KS}(S,{S\otimes_KS})$.  It is proved 
  that this module is invertible if and only if all fiber rings of $\sigma$ 
  are Gorenstein.  It is also proved that if $K$ is Gorenstein and $S$ 
  is Cohen-Macaulay and generically Gorenstein, then the vanishing of 
  $\operatorname{Ext}^n_{S\otimes_KS}(S,{S\otimes_KS})$ for $\dim S$ 
  consecutive specified values of $n$ is necessary  and sufficient for 
  $S$ to be Gorenstein.
\end{abstract}

\dedicatory{To Mel Hochster on his 65th birthday.}

\maketitle

\section*{Introduction}

Each one of the main classes of commutative noetherian rings---regular,
complete intersection, Gorenstein, and {Cohen-Macaulay}---is defined
by \emph{local} properties that require verification at \emph{every}
maximal ideal.  It is therefore important to develop for these properties
\emph{global} recognition criteria involving only \emph{finitely many}
invariants.  Finitely generated algebras over fields provide the test
case.  Our goal is to develop finitistic global tests applicable also
in a more general, relative situation.

To fix notation, let $K$ be a commutative noetherian ring and
$\sigma\col K\to S$ a \emph{flat} homomorphism of rings, which is
\emph{essentially of finite type}.  It is said to be {Cohen-Macaulay}
or Gorenstein if its fiber rings have the corresponding property; see
Section \ref{Quasi-Gorenstein homomorphisms} for details.  The result
below is taken from Theorem~\ref{defs:cm2}; recall that $\grade_PS$
is the smallest integer $n$ with $\Ext n{P}S{P}\ne0$.

  \begin{itheorem}
Assume that $\Spec S$ is connected.  Let $K\to P\to S$ be a factorization
of $\sigma$ with $P$ a localization of some polynomial ring and $S$
a finite $P$-module.

The map $\sigma$ is {Cohen-Macaulay} if for $g=\grade_PS$ one has
\[
\Ext nPSP=0 \quad\text{for}\quad g< n\le g+d\,.
\]
Conversely, if $\sigma$ is {Cohen-Macaulay}, then $\Ext nPSP=0$ holds for $n\ne g$.

The homomorphism $\sigma$ is Gorenstein if and only if it is Cohen-Macaulay and the $S$-module 
$\Ext gPSP$ is invertible.  
   \end{itheorem}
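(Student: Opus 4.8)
The plan is to realize $\Ext gPSP$ as the bottom cohomology module of the relative dualizing complex of $\sigma$ and then translate invertibility of that complex into the two stated conditions. Write $\omega$ for the relative dualizing complex of $\sigma$; by the standard characterization of Gorenstein homomorphisms---which is the geometric content of the Hochschild-cohomological criterion recalled in the abstract---$\sigma$ is Gorenstein if and only if $\omega$ is invertible in $\dcat[S]$. Thus the task is to recognize invertibility of $\omega$ through the factorization $K\to P\to S$.

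Because $P$ is a localization of a polynomial ring over $K$, the map $K\to P$ is essentially smooth and $\Omega_{P/K}$ is free, of rank equal to the number $e$ of variables; hence the relative dualizing complex of $K\to P$ is $\susp^{e}\bwedge^{e}\Omega_{P/K}\simeq\susp^{e}P$. Since $P\to S$ is finite, transitivity of relative dualizing complexes along $K\to P\to S$ gives $\omega\simeq\Rhom PS{\susp^{e}P}\simeq\susp^{e}\Rhom PSP$ in $\dcat[S]$, so that $\CH n{\omega}\cong\Ext{n+e}PSP$ for all $n$. As $\grade_PS=g$, the cohomology of $\omega$ is supported in the degrees $n$ with $n+e\geq g$, with $\CH{g-e}{\omega}\cong\Ext gPSP\ne0$. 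Consequently $\omega$ is isomorphic in $\dcat[S]$ to a shift of an $S$-module if and only if $\Ext nPSP=0$ for every $n\ne g$, and when this holds $\omega\simeq\susp^{e-g}\bigl(\Ext gPSP\bigr)$.

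It remains to assemble the pieces. An object of $\dcat[S]$ that is a shift of a finite $S$-module is invertible exactly when that module is invertible over $S$; connectedness of $\Spec S$---which is in force---keeps the shift constant over $\Spec S$ and makes this literally correct. By the first part of the theorem, the condition ``$\Ext nPSP=0$ for all $n\ne g$'' is equivalent to $\sigma$ being Cohen-Macaulay. Therefore $\sigma$ is Gorenstein $\iff$ $\omega$ is invertible in $\dcat[S]$ $\iff$ $\Ext nPSP=0$ for all $n\ne g$ and $\Ext gPSP$ is invertible over $S$ $\iff$ $\sigma$ is Cohen-Macaulay and $\Ext gPSP$ is invertible over $S$; in particular ``Gorenstein $\Rightarrow$ Cohen-Macaulay'' drops out of this chain. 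The step with real content is the identification $\omega\simeq\susp^{e}\Rhom PSP$: it rests on the transitivity of relative dualizing complexes along a factorization whose last map is finite, and on the relative dualizing complex of an essentially smooth homomorphism being $\susp^{e}\bwedge^{e}\Omega_{P/K}$---inputs that should be available by this stage of the paper. Granting those, the rest is bookkeeping with the single integer $g$.
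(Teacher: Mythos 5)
Your argument addresses only the last assertion of the theorem, and even there it leans on an input that is essentially the statement being proved. First, the Cohen--Macaulay characterization (the first two claims) is nowhere established: you invoke ``the first part of the theorem'' to convert ``$\Ext nPSP=0$ for all $n\ne g$'' into ``$\sigma$ is Cohen--Macaulay'', but that part is exactly what must be shown, and it is not formal. The forward direction requires the bound $\pd_PS\le\fd_KS+d<\infty$ coming from the (essentially smooth)-by-finite factorization (see \ref{fpd:smooth}), so that vanishing of $\Ext nPSP$ in the stated window above $g=\grade_PS$ forces $\pd_PS=g$, whence $\pd_{P_{\fn\cap P}}S_\fn=\grade_{P_{\fn\cap P}}S_\fn$ for every $\fn$ and $\sigma$ is Cohen--Macaulay by \ref{cm:localization}. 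The converse direction uses the connectedness of $\Spec S$ through a path of specializations and generizations to show that the local invariants $\grade_{P_{\fn\cap P}}S_\fn$ and $\pd_{P_{\fn\cap P}}S_\fn$ take a single common value, which is what makes $\grade_PS=\pd_PS$ and hence the vanishing for $n\ne g$. None of this appears in your write-up.

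Second, the pivot of your Gorenstein argument is the assertion that $\sigma$ is Gorenstein if and only if $\omega\simeq\susp^{e}\Rhom PSP$ is invertible in $\dcat[S]$, which you attribute to ``the standard characterization of Gorenstein homomorphisms --- the geometric content of the Hochschild-cohomological criterion recalled in the abstract''. That criterion is Theorem \ref{diagonal:gor}, one of the main results of this paper, and both it and the statement you are proving are deduced from the same source, Corollary \ref{cor:gor}; as a citation this is circular. If instead you mean to import the equivalence from duality theory, the direction with content is ``$\Ext{}PSP$ invertible $\Rightarrow$ $\sigma$ Gorenstein'': from invertibility of the graded module one must first conclude that $\gdim_PS$ is finite (equivalently, that $\Rhom PSP$ is genuinely a dualizing object for $S$ over $P$, so that its local triviality says something about the fibers). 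Supplying that implication is precisely the role of the paper's Theorem \ref{foxby} and Corollary \ref{invertible}, via the theory of quasi-Gorenstein maps of \cite{AF:qG}. Your transitivity and shift bookkeeping is fine, but it is the easy half; without an argument for this implication the Gorenstein criterion does not follow.
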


Thus, it is easy to recognize {Cohen-Macaulay} maps, because they
are characterized in terms of \emph{vanishing} of cohomology in a
\emph{finite} number of \emph{specified} degrees; this can be decided
from finite constructions.  On the other hand, the condition
needed to identify the Gorenstein property involves the \emph{structure}
of $\Ext gPSP$ as a module over $S$, which is not determined by finitistic
data; see Remark \ref{action}.

Partly motivated by recent characterizations of regular homomorphisms,
recalled in Section \ref{Gorenstein homomorphisms}, we approach the
problem by studying the homological properties of $S$ as a module over
the \emph{enveloping algebra} $\env S=S\otimes_KS$, which acts on $S$
through the \emph{multiplication map} $\mu\col S\otimes_KS\to S$.
As usual, $\dim S$ denotes the \emph{Krull dimension} of $S$.
For a prime ideal $\fq$ of  $S$ we let $\trdeg_K(S/\fq)$ denotes the
\emph{transcendence degree} of the field of fractions of $S/\fq$ over
that of the image of $K$.

The next theorem characterizes Gorenstein homomorphisms in terms of
properties of Ext modules over $S\otimes_KS$, even without assuming that
$\sigma$ is {Cohen-Macaulay}.

\begin{itheorem}
  \label{ihoch}
The homomorphism $\sigma\col K\to S$ is Gorenstein if and only if the
$S$-module $\bigoplus_{n=0}^\infty\Ext n{\env S}S{\env S}$ is invertible.
 \end{itheorem}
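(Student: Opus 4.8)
The plan is to derive Theorem~\ref{ihoch} from Theorem~\ref{defs:cm2} by a change-of-rings comparison. Since $S$ is noetherian, $\Spec S$ has finitely many connected components and all the assertions involved may be tested componentwise, so assume $\Spec S$ connected and fix a factorization $K\to P\to S$ as in Theorem~\ref{defs:cm2}, with $P$ a localization of a polynomial ring of relative dimension $e$ over $K$ and $S$ a finite $P$-module. Observe first that $\bigoplus_{n}\Ext n{\env S}S{\env S}$ is invertible if and only if the complex $\Rhom{\env S}S{\env S}$ is invertible in $\dcat[S]$: an invertible object of $\dcat[S]$ has cohomology a single invertible $S$-module placed in a single degree (as $\Spec S$ is connected), and conversely a complex with that property is invertible. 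Likewise, by Theorem~\ref{defs:cm2} the complex $\Rhom PSP$ is invertible in $\dcat[S]$ exactly when $\sigma$ is {Cohen-Macaulay} with $\Ext gPSP$ invertible ($g=\grade_PS$), that is, exactly when $\sigma$ is Gorenstein. Thus it suffices to prove that $\Rhom{\env S}S{\env S}$ is invertible in $\dcat[S]$ if and only if $\Rhom PSP$ is.

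The heart of the matter is the identification, in $\dcat[S]$,
\[
\Rhom{\env S}S{\env S}\ \simeq\ \Shift^{-e}\bigl(\Rhom S{\Rhom PSP}S\otimes_S\Lambda\bigr),
\]
where $\Lambda$ is the base change along $\sigma'\colon P\to S$ of the invertible $P$-module $\Ext e{\env P}P{\env P}$, $\env P=P\otimes_KP$. The ingredients I would use are: the Koszul resolution of $P$ over $\env P$ on a regular sequence generating the kernel of the multiplication $\env P\to P$ --- available since $K\to P$ is smooth of relative dimension $e$ --- which gives $\Rhom{\env P}P{\env P}\simeq\Shift^{-e}\Ext e{\env P}P{\env P}$ with $\Ext e{\env P}P{\env P}$ invertible over $P$; the flatness of $S$, $\env S$ and $\env P$ over $K$; a Künneth isomorphism $\Rhom{\env P}{\env S}{\env P}\simeq\Rhom PSP\dtensor K\Rhom PSP$; the change-of-rings formula $\Rhom{\env P}S{\env P}\simeq\Rhom P{S}{\Rhom{\env P}P{\env P}}$; and the duality adjunction $\Rhom{\env S}{S}{\Rhom{\env P}{\env S}{\env P}}\simeq\Rhom{\env P}S{\env P}$ for the finite homomorphism $\env P\to\env S$. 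Assembling these, together with $S$-linear duality to pass between the two Künneth factors, yields the displayed formula. I expect this comparison --- and in particular keeping the various, possibly unbounded, complexes under control and verifying the reflexivity the argument needs --- to be the main obstacle; the rest is formal.

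Granting the formula, write $C=\Rhom PSP$. If $\sigma$ is Gorenstein it is {Cohen-Macaulay}, so by Theorem~\ref{defs:cm2} $C$ is quasi-isomorphic to the invertible $S$-module $\Ext gPSP$ concentrated in degree $g$; hence $\Rhom SCS$ is an invertible module in a single degree, and so is $\Rhom{\env S}S{\env S}$ by the formula, whence $\bigoplus_n\Ext n{\env S}S{\env S}$ is invertible. Conversely, if $\Rhom{\env S}S{\env S}$ is invertible in $\dcat[S]$, then untwisting by $\Lambda$ and unshifting shows $\Rhom SCS$ is invertible in $\dcat[S]$; being invertible it is perfect, and since $C$ is derived $S$-reflexive we get $C\simeq\Rhom S{\Rhom SCS}S$, the $S$-dual of an invertible complex, which is again invertible in $\dcat[S]$. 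Therefore $C=\Rhom PSP$ is quasi-isomorphic to a single invertible $S$-module in a single degree; that degree is forced to be $g=\grade_PS$ because $\Ext mPSP=0$ for $m<\grade_PS$, so $\Ext nPSP=0$ for $g<n\le g+d$ and $\Ext gPSP$ is invertible. By Theorem~\ref{defs:cm2} this says $\sigma$ is {Cohen-Macaulay} and Gorenstein, which completes the proof.
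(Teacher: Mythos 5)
Your overall strategy is genuinely different from the paper's: you reduce everything to the complex $C=\Rhom PSP$ over $S$ via the formula of \ref{computation:smooth}, whereas the paper works with $S$ as a module over $\env S$, applying Theorem~\ref{finite:gorcriteria} to the Gorenstein{-}by{-}finite factorization $\env S\xra{\id}\env S\xra{\mu}S$ and then the decomposition theorem for quasi-Gorenstein maps. Two remarks on the part of your argument that works: the identification you propose to prove from scratch is, up to replacing $\Lambda$ by its inverse, exactly \ref{computation:smooth}, which the paper quotes from \cite{AIL}; you can simply cite it, and it is not where the difficulty lies. Your forward implication is then sound.

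The converse has a genuine gap at the words ``since $C$ is derived $S$-reflexive''. This is asserted, not proved, and it cannot be taken for granted: by \ref{gd=reflexive}, derived $S$-reflexivity of $C$ (together with boundedness of $\Rhom SCS$) is equivalent to $\gdim_SC<\infty$, and in the degenerate case where $K=P$ is a field and $S$ is artinian local one has $C=\Hom KSK$, the canonical module, whose G-dimension over $S$ is finite only when $S$ is Gorenstein. So the reflexivity of $C$ is essentially the conclusion you are trying to reach. The extra datum you do have---$\Rhom SCS$ invertible---does not supply it through any result in the paper: Theorem~\ref{foxby} and Corollary~\ref{invertible} require, respectively, an isomorphism $C_\fn\simeq\Rhom{S_\fn}{\Rhom{S_\fn}{C_\fn}{S_\fn}}{S_\fn}$ or $\Ext n{S_\fn}{C_\fn}{S_\fn}\cong C_\fn$ in a single degree, while what you know is only that these objects are shifts of $S_\fn$; identifying them with $C_\fn$ is again the point at issue. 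Indeed, with $K=P$ a field your converse reads: if $\Hom S{\Hom KSK}S$ is invertible and $\Ext nS{\Hom KSK}S=0$ for $n\ge1$, then $S$ is Gorenstein---a statement in the immediate neighbourhood of Tachikawa's conjecture, which is precisely why the paper builds the machinery of Proposition~\ref{finite:test} and Theorem~\ref{foxby}. The repair is to apply Corollary~\ref{invertible} not to $C$ over $S$ but to $S$ over $\env S$, where the hypothesis $\Ext n{(\env S)_\fm}{S_\fm}{(\env S)_\fm}\cong S_\fm$ in a single degree is exactly what invertibility of the graded Ext module provides; this gives $\gdim_{\env S}S<\infty$, and one concludes as in Theorem~\ref{diagonal:gor}.
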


This result is contained in Theorem~\ref{diagonal:gor}, whose proof hinges
on properties of quasi-Gorenstein homomorphisms, defined and studied
in \cite{AF:qG}, and on a strengthening of Foxby's criterion for finite
G-dimension, obtained in Theorem~\ref{foxby}.  When the $K$-module $S$
is \emph{projective}, rather then just flat, the  theorem can be stated
in terms of \emph{Hochschild cohomology} by using the isomorphisms
of $S$-modules
\[
\hcoh[n]KSN\cong\Ext n{\env S}SN \quad\text{for all}\quad n\in\BZ\,.
\]

Two aspects of Theorem \ref{ihoch} present difficulties in applications:
All modules $\Ext n{\env S}S{\env S}$ are involved, and conditions other
than vanishing are imposed on these modules.  The next result identifies
critical values of $n$.

\begin{itheorem}
  \label{ithm:bigrade}
For every minimal prime ideal $\fq$ of $S$ one has
  \[
\Ext {t}{\env S}S{\env S}_\fq\ne0 
  \quad\text{with}\quad 
t=\trdeg_K(S_\fq/\fq S_\fq)\,.
  \]

If $\Spec S$ is connected and $\sigma$ is {Cohen-Macaulay}, then $t$
is independent of $\fq$.

If, moreover, $\sigma$ is Gorenstein, then $\Ext {n}{\env S}S{\env S}=0$
for $n\ne t$.
  \end{itheorem}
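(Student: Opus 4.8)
The plan is to deduce all three statements from one local computation of the grade of $S$ over $\env S$, together with Theorem~\ref{ihoch} for the Gorenstein case.  Write $\mu\col\env S\to S$ for the multiplication map and $\lambda\col S\to\env S$, $s\mapsto s\otimes1$, for the left structure homomorphism; flatness of $\sigma$ makes $\lambda$ flat, and $\mu\lambda=\id_S$ splits it.  Since $S=\env S/\ker\mu$ is finitely presented over the noetherian ring $\env S$, each $\Ext n{\env S}S{\env S}$ is a finite $S$-module and its formation commutes with flat base change over $\env S$; in particular, localizing $K$ at $\fp=\fq\cap K$ and then $S$ at $\fq$, I reduce the first statement to the case in which $K$ and $S$ are artinian local.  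Then $\grade_{\env S}S$, the least $n$ with $\Ext n{\env S}S{\env S}\ne0$, equals $\depth(\env S)_{\fn}$ with $\fn=\mu^{-1}(\fq)$, because $S$ is an $\env S$-module of finite length supported only at $\fn$.

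To evaluate that depth I would use that localizing $\lambda$ yields a flat local homomorphism $S\to(\env S)_{\fn}$, so $\depth(\env S)_{\fn}=\depth S+\depth F$ where $F$ is its closed fibre.  Writing $k=K/\fp K$ (a field, as $\fp$ is minimal), $\ell=S_\fq/\fq S_\fq$ and $\bar S=S\otimes_Kk$, one has $F=(\ell\otimes_KS)_{\fn_0}=(\ell\otimes_k\bar S)_{\fn_0}$, the localization at the kernel of the augmentation to $\ell$.  Filtering $\bar S$ by a composition series reduces the Cohen--Macaulayness of $\ell\otimes_k\bar S$ to that of $\ell\otimes_k\ell$, and the latter is Cohen--Macaulay of dimension $t=\trdeg_K(S_\fq/\fq S_\fq)$ because, for a transcendence basis $\bss$ of the finitely generated extension $\ell/k$, the ring $\ell\otimes_k\ell$ is finite free over the regular ring $\ell\otimes_kk(\bss)$, which has dimension $t$.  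Hence $F$ is Cohen--Macaulay of dimension $t$ and $\depth(\env S)_{\fn}=\depth S+t=t$, so $\Ext t{\env S}S{\env S}_\fq\ne0$.

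For the second statement, assume $\sigma$ is {Cohen-Macaulay}.  Then its fibre rings $S\otimes_K\kappa(\fp)$ are {Cohen-Macaulay}, hence so are their base changes along field extensions---but these are exactly the fibres of $\lambda$, which is flat and essentially of finite type.  For a minimal prime $\fq$ the computation above identifies $t=\trdeg_K(S_\fq/\fq S_\fq)$ with the dimension of the fibre of $\lambda$ over $\fq$ localized at $\fn=\mu^{-1}(\fq)$, and equidimensionality of {Cohen-Macaulay} fibre rings makes this the fibre dimension of $\lambda$ at $\fn$.  The fibre dimension of a flat, essentially of finite type homomorphism is locally constant on its source; restricting this function to $\VV(\ker\mu)\subseteq\Spec\env S$, which is homeomorphic to the connected space $\Spec S$, shows it is constant there, so $t$ is independent of the minimal prime $\fq$.

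For the third statement, suppose $\sigma$ is Gorenstein; then by Theorem~\ref{ihoch}---equivalently Theorem~\ref{diagonal:gor}---the $S$-module $M=\bigoplus_{n\ges0}\Ext n{\env S}S{\env S}$ is invertible, hence a finite $S$-module that is locally free of rank one.  At each prime of $\Spec S$ the local ring is indecomposable over itself, so exactly one graded component of $M$ is nonzero there; in particular the subsets $\Supp\Ext n{\env S}S{\env S}$ are pairwise disjoint.  By the first two statements $\Ext t{\env S}S{\env S}_\fq\ne0$ for every minimal prime $\fq$, with $t$ their common value, so $\Supp\Ext t{\env S}S{\env S}=\Spec S$ and disjointness forces $\Ext n{\env S}S{\env S}=0$ for $n\ne t$.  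The main obstacle is the local grade computation of the first two paragraphs: identifying $\depth(\env S)_{\fn}$ with $t$ rests on the Cohen--Macaulayness, of the expected dimension, of the diagonal ring $\ell\otimes_k\ell$ even when $\ell/k$ is not separably generated, and this is exactly where one must use that $\ell/k$ is \emph{finitely generated}---so that $\ell$ is finite free over a purely transcendental subextension---rather than any form of smoothness.
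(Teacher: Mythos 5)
Your reductions for the first and third statements are sound, and the first one takes a genuinely different route from the paper.  The paper obtains $\bigrade[\fq]{\sigma}=\rtd\sigma\fq$ from the reduction formula $\Ext n{\env S}S{\env S}\cong\Ext {n-d}S{\Rhom PS{{\ts\bwedge}^{d}_P\Omega_{P\var K}}}S$ of \ref{computation:smooth} (imported from \cite{AIL}), applied in Lemmas \ref{lem:bigrade1} and \ref{lem:bigrade3} to an explicitly constructed smooth{-}by{-}finite factorization $K_\fp\to Q\to S_\fq$ with $S_\fq$ free over the artinian local ring $Q$.  You instead compute $\grade_{\env S}S$ directly as $\depth(\env S)_\fn$, using only that $S_\fq$ has finite length over $\env{(S_\fq)}$ with support $\{\fn\}$, the depth formula for the flat local map induced by $\lambda$, and the Cohen--Macaulayness of $\ell\otimes_k\ell$ of dimension $t$; this bypasses the machinery of \cite{AIL} entirely and is correct.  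Your third paragraph is essentially the paper's argument (Theorem \ref{diagonal:gor} plus the disjoint-support consequence of Lemma \ref{graded-invertibles}, as in Theorem \ref{gorenstein-factorization}).

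The gap is in the second statement.  The principle you invoke---``the fibre dimension of a flat, essentially of finite type homomorphism is locally constant on its source''---is false: Chevalley's theorem gives only upper semicontinuity.  For example, $k\to k[x,y,z]/(xz,yz)$ is flat and of finite type with connected spectrum, yet the local fibre dimension is $2$ at the origin and $1$ at the points of the line $x=y=0$ away from it.  Local constancy does hold on the locus where the fibres are Cohen--Macaulay (EGA IV, 15.4.3), and you establish one sentence earlier that all fibres of $\lambda$ are Cohen--Macaulay when $\sigma$ is; so your argument is repairable, but only by citing that statement with its Cohen--Macaulay hypothesis rather than the false general one---the hypothesis is doing essential work exactly at this step.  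Alternatively, constancy of $t$ follows as in the paper's Theorem \ref{defs:cm2}: for $\fn\subseteq\fn'$ the chain $\grade_PS\le\grade_{P_{\fn\cap P}}S_\fn\le\pd_{P_{\fn\cap P}}S_\fn\le\pd_PS$ of \ref{imperfection}, combined with \ref{cm:localization}, shows the common value of grade and projective dimension is constant along specializations, hence constant on the connected space $\Spec S$ by a path argument, and then $t=d-\pd_PS$ for every minimal prime by your own first computation (or by Lemma \ref{lem:bigrade1}).
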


Theorem~\ref{ithm:bigrade} is abstracted from Theorems \ref{thm:bigrade},
\ref{big:cm} and \ref{gorenstein-factorization}.  Their proofs hinge
on an expression for the modules $\Ext n{\env S}S{\env S}$ in terms of
cohomology computed over the ring $S$.  The relevant formula complements
the classical technique of reduction to the diagonal; it is proved in
\cite{AIL} and is reproduced in \ref{computation:smooth}.

We expect that the last statement of Theorem~\ref{ithm:bigrade}
admits a strong converse:

  \begin{Conjecture}
When $\Spec S$ is connected, $\sigma$ is {Cohen-Macaulay}, and one has
  \[
\Ext n{\env S}S{\env S}=0 
   \quad\text{for}\quad 
\trdeg_K(S/\fq)<n\le\trdeg_K(S/\fq)+\min\{\dim S,1\}\,,
  \]
then the homomorphism $\sigma$ is Gorenstein.
  \end{Conjecture}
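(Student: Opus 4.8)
The plan is to deduce the conjecture from Theorem~\ref{diagonal:gor}, by which $\sigma$ is Gorenstein precisely when the graded $S$-module $E:=\bigoplus_{n\geqslant 0}\Ext n{\env S}S{\env S}$ is invertible --- i.e.\ concentrated in a single degree, in which it is an invertible $S$-module. Since $\Spec S$ is connected and $\sigma$ is {Cohen-Macaulay}, Theorem~\ref{ithm:bigrade} provides a single integer $t=\trdeg_K(S/\fq)$, independent of the minimal prime $\fq$, with $\bigl(\Ext t{\env S}S{\env S}\bigr)_\fq\ne0$ for every such $\fq$; thus the goal is to prove that $E_n=0$ for $n\ne t$ and that $E_t$ is invertible.

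I would extract this from a factorization $K\to P\to S$ with $P$ a localization of a polynomial ring and $S$ a finite $P$-module. By Theorem~\ref{defs:cm2} the {Cohen-Macaulay} hypothesis forces $\Ext nPSP=0$ for $n\ne g:=\grade_PS$, so the module $\omega:=\Ext gPSP$ --- a relative dualizing module for $\sigma$ (cf.\ \cite{AF:qG}) --- is the only term left when one substitutes into the formula of~\ref{computation:smooth}, and I expect that substitution to collapse to a natural isomorphism of graded $S$-modules
\[
\Ext{t+n}{\env S}S{\env S}\;\cong\;\Ext nS\omega S\otimes_S L\qquad\text{for all }n\geqslant0,
\]
with $L$ an invertible $S$-module coming from a top exterior power of $\Omega_{P/K}\otimes_PS$. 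Granting this, $E_n=0$ for $n<t$ is automatic and $E_t\cong\Hom S\omega S\otimes_SL$; and --- setting aside the degenerate case $\dim S=0$, where the displayed range of degrees is empty and the assertion already fails for $S=k[x,y]/(x,y)^2$ over a field $k$ --- the hypothesis reads $\Ext 1S\omega S=0$.

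The next step reduces everything to an Ext-vanishing statement about $\omega$ over $S$. Since $S$ is {Cohen-Macaulay} it satisfies Serre's condition $(S_2)$, so the maximal {Cohen-Macaulay} module $\omega$ is reflexive; hence, were it known that $\Ext iS\omega S=0$ for $1\leqslant i\leqslant\dim S$, a standard Ext-criterion for the Gorenstein property would make $S_\fp$ Gorenstein at every prime $\fp$, i.e.\ $S$ Gorenstein and $\omega$ invertible (alternatively, vanishing of $\Ext iS\omega S$ for \emph{all} $i\geqslant1$ gives $\omega$ finite Gorenstein dimension over $S$, which for a dualizing module already forces $S$ to be Gorenstein, as it forces the residue field of every localization to have finite Gorenstein dimension). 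Then $E=E_t$ is invertible and $\sigma$ is Gorenstein by Theorem~\ref{diagonal:gor}. So the conjecture reduces to the rigidity statement: \emph{if $S$ is {Cohen-Macaulay} with relative dualizing module $\omega$ and $\Ext 1S\omega S=0$, then $\Ext iS\omega S=0$ for $2\leqslant i\leqslant\dim S$} (indeed, for all $i\geqslant1$).

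In dimension one this rigidity is the classical fact that a one-dimensional {Cohen-Macaulay} ring is Gorenstein as soon as its canonical module has vanishing first Ext into the ring, which is exactly why the cutoff $\min\{\dim S,1\}=1$ is the right one there. For $\dim S\geqslant2$ the natural line of attack is to descend to dimension one: pick an element $x\in\fm$ that is a nonzerodivisor on both $S$ and $\omega$, pass to $\ov S=S/xS$ with $\ov\omega=\omega/x\omega$ (a relative dualizing module for the induced homomorphism), transfer the vanishing via the change-of-rings isomorphism $\Ext iS\omega{\ov S}\cong\Ext i{\ov S}{\ov\omega}{\ov S}$ and the long exact sequence of $0\to S\xra xS\to\ov S\to0$, and iterate. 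The hard point --- and, I suspect, the reason the statement is only conjectured --- is that this transfer is not unconditional: with $\Ext 1S\omega S=0$ the long exact sequence gives only
\[
\Ext 1{\ov S}{\ov\omega}{\ov S}\;\cong\;\Ker\bigl(x\colon\Ext 2S\omega S\to\Ext 2S\omega S\bigr),
\]
which cannot be forced to vanish without already controlling $\Ext 2S\omega S$, i.e.\ without the very rigidity one is after. Since Gorensteinness is a fiberwise condition, the heart of the matter lies in the fiber rings $S\otimes_K\kappa(\fp)$, which are {Cohen-Macaulay} algebras essentially of finite type over fields; a successful attack would presumably combine the semidualizing identity $\Rhom S\omega\omega\simeq S$, the finiteness of the injective dimension of $\omega$, and Grothendieck--Serre duality to pin down the complex $\Rhom S\omega S$ and preclude isolated nonvanishing of its cohomology.
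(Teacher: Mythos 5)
The statement you were asked to prove is labelled a \emph{Conjecture} in the paper and is genuinely open: the authors themselves note that the case $K$ a field with $\rank_KS<\infty$ specializes to Asashiba's strengthening of the (still open) commutative Tachikawa conjecture, and the only case they actually prove is Theorem~\ref{portmanteau:Gor}, which adds the hypotheses that $K$ is Gorenstein and $S$ is generically Gorenstein. So there is no proof in the paper to compare against, and your write-up --- which ends by conceding that the key rigidity step cannot be carried out --- is, as you yourself say, not a proof.

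That said, your reduction is sound and coincides with the paper's own machinery: by \ref{computation:smooth} and Theorem~\ref{big:cm}(2), the Cohen-Macaulay hypothesis collapses $\Ext{}{\env S}S{\env S}$ to $\Ext{}SCS$ shifted by $b=\bigrade\sigma=t$, where $C$ is the twist of $\Ext gPSP$ by an invertible module, so the hypothesis becomes $\Ext1SCS=0$; and you correctly isolate the missing ingredient as a Tachikawa-type rigidity statement and correctly diagnose why the hyperplane-section induction breaks down. Two caveats. First, even granting full rigidity ($\Ext iSCS=0$ for $1\le i\le\dim S$), the criterion \ref{tachikawa} that the paper invokes still requires $S$ to be generically Gorenstein, so the conjecture does not quite ``reduce to'' rigidity alone: one must also handle the minimal primes, which is exactly what the extra hypothesis of Theorem~\ref{portmanteau:Gor} buys. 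Second, your remark about dimension zero is apt: as printed, $\min\{\dim S,1\}$ makes the hypothesis vacuous when $\dim S=0$ and the assertion then fails for $S=k[x,y]/(x,y)^2$; the intended bound is evidently $\max\{\dim S,1\}$, which recovers Asashiba's condition $\Ext1{\env S}S{\env S}=0$ in that case and matches the range $\rtd{\sigma}{\fq}<n\le\rtd{\sigma}{\fq}+\dim S$ used in Theorem~\ref{portmanteau:Gor} when $\dim S\ge1$.
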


When $K$ is a field and $\rank_KS$ is finite one has $\dim S=\trdeg_K(S/\fq)=0$, 
so the preceding statement specializes to a conjecture of Asashiba's \cite[\S3]{As}, 
which strengthens the still open commutative case of a conjecture of Tachikawa:  If 
$\Ext n{\env S}S{\env S}=0$ holds for \emph{every} $n\ge1$, then $S$ 
is self-injective; see \cite[p.~115]{Tc}.  The next result is new even when 
$K$ is a field and $S$ is reduced; it is proved as 
Theorem~\ref{portmanteau:Gor}.

\begin{itheorem}
  \label{iGorenstein}
The conjecture above holds when the ring $K$ is Gorenstein and the ring 
$S_\fq$ is Gorenstein for every minimal prime ideal $\fq$ of $S$.
  \end{itheorem}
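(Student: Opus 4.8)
The plan is to reduce, via standard properties of Gorenstein homomorphisms and the reduction-to-the-diagonal formula of \cite{AIL}, to a statement about the canonical module of $S$, and then to extract that statement from the vanishing hypothesis by combining the strengthened Foxby criterion (Theorem~\ref{foxby}) with the theory of quasi-Gorenstein homomorphisms of \cite{AF:qG}. If $\dim S=0$, then $\Spec S$ connected forces $S$ to be artinian local; its unique minimal prime is the maximal ideal, so the hypothesis on $S_\fq$ says exactly that $S$ is Gorenstein, and then $\sigma$ is Gorenstein because $K$ is Gorenstein and $\sigma$ is flat (Section~\ref{Gorenstein homomorphisms}). So assume $\dim S\ge1$; then $\min\{\dim S,1\}=1$, and the hypothesis is the single vanishing $\Ext{t+1}{\env S}S{\env S}=0$, where $t=\trdeg_K(S_\fq/\fq S_\fq)$ is independent of the minimal prime $\fq$ by Theorem~\ref{ithm:bigrade} (using that $\sigma$ is Cohen--Macaulay and $\Spec S$ is connected). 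Since $K$ is Gorenstein --- hence Cohen--Macaulay --- and $\sigma$ is Cohen--Macaulay, $S$ is Cohen--Macaulay, and $D^\sigma$ is a dualizing complex for $S$; being a Cohen--Macaulay homomorphism, $D^\sigma$ is, up to a shift, a single finitely generated module $\omega$, the canonical module of $S$. By \cite{AF:qG}, the Cohen--Macaulay homomorphism $\sigma$ is Gorenstein if and only if $D^\sigma$, equivalently $\omega$, is invertible; and the hypothesis that $S_\fq$ is Gorenstein for every minimal $\fq$ says exactly that $\omega$ is free of rank one in codimension $0$.

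Next I would translate the vanishing hypothesis using the formula reproduced in~\ref{computation:smooth}, which provides an isomorphism of graded $S$-modules
\[
\Ext{t+i}{\env S}S{\env S}\;\cong\;\Ext iS\omega S\qquad(i\ge0),
\]
with $\Ext n{\env S}S{\env S}=0$ for $n<t$. In particular $\Ext t{\env S}S{\env S}\cong\Hom S\omega S$, which is non-zero at every minimal prime of $S$ by Theorem~\ref{ithm:bigrade} --- consistent with, and implied by, the generic-Gorenstein hypothesis --- while the standing hypothesis becomes the single vanishing $\Ext 1S\omega S=0$. Thus everything comes down to: \emph{a Cohen--Macaulay ring $S$ whose canonical module $\omega$ is free of rank one in codimension $0$ and satisfies $\Ext 1S\omega S=0$ is Gorenstein.}

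To prove this, note first that $\omega$ is semidualizing: $\Hom S\omega\omega\cong S$ and $\Ext iS\omega\omega=0$ for $i>0$. The decisive step is Theorem~\ref{foxby}, the strengthened form of Foxby's criterion for finite G-dimension. Classically, finiteness of $\gdim_S\omega$ is detected by the vanishing of $\Ext iS\omega S$ (equivalently, of the relevant Tor modules against $\omega$) in \emph{all} large degrees; the point of~\ref{foxby} is that, once the generic-Gorenstein hypothesis pins the situation down in codimension $0$, a single such degree already propagates, so that $\Ext 1S\omega S=0$ forces $\gdim_S\omega<\infty$. Since $\omega$ is maximal Cohen--Macaulay, the Auslander--Bridger equality gives $\gdim_S\omega=\depth S-\depth\omega=0$ at each maximal ideal, so $\omega$ is totally reflexive there; a semidualizing module of finite G-dimension over a local ring is free of rank one, so $\omega$ is invertible. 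Hence $D^\sigma$ is invertible, $\sigma$ is quasi-Gorenstein in the sense of \cite{AF:qG}, and, being Cohen--Macaulay, $\sigma$ is Gorenstein.

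The hard part, I expect, is precisely the implication $\Ext 1S\omega S=0\Rightarrow\gdim_S\omega<\infty$ under the generic-Gorenstein hypothesis. The reflexivity half of Foxby's criterion a priori requires cohomological control in infinitely many degrees, and converting a single such degree --- bootstrapped from the codimension-$0$ information supplied by generic Gorensteinness --- into full finiteness of G-dimension, uniformly over $\Spec S$, is exactly what Theorem~\ref{foxby} must deliver, and where the argument does its real work.
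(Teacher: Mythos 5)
Your reduction to a statement about the canonical module does match the paper's: $\sigma$ is Cohen--Macaulay because $S$ is, Theorem~\ref{big:cm} supplies the module $C$ (your $\omega$) that is locally a canonical module, and \ref{computation:smooth} translates the Hochschild vanishing into $\Ext nSCS=0$ in low positive degrees. The gap is precisely at the step you yourself flag as ``where the argument does its real work.'' You attribute the implication from this vanishing to $\gdim_S\omega<\infty$ to Theorem~\ref{foxby}, but that theorem cannot deliver it: its hypothesis (a) is the local biduality isomorphism $M_\fm\simeq\Rhom{R_\fm}{\Rhom{R_\fm}{M_\fm}{R_\fm}}{R_\fm}$ at every maximal ideal, which is exactly the kind of statement one would have to establish here, and nothing in Theorem~\ref{foxby} or Corollary~\ref{invertible} converts one (or finitely many) Ext vanishings plus codimension-zero information into that isomorphism --- Corollary~\ref{invertible} needs $\Ext n{R_\fm}{M_\fm}{R_\fm}$ to be controlled in \emph{all} degrees, with the unique nonvanishing one isomorphic to $M_\fm$. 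In the paper, Theorem~\ref{foxby} is used only on the way to Theorem~\ref{diagonal:gor}; the engine of Theorem~\ref{portmanteau:Gor} is the external theorem of Avramov--Buchweitz--\c Sega and Hanes--Huneke recorded in \ref{tachikawa}: a generically Gorenstein, Cohen--Macaulay local ring with canonical module $C$ satisfying $\Ext nSCS=0$ for $1\le n\le\dim S$ is Gorenstein. That result is proved by entirely different means and is not a consequence of the G-dimension machinery of Section~1; your proposal in effect restates it (``$\Ext{}S{\omega}S$ vanishes in low degrees $\Rightarrow\gdim_S\omega<\infty$'', which for a semidualizing MCM module is equivalent to Gorensteinness) and then asserts it.

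A second, related problem: you use only the single vanishing $\Ext{t+1}{\env S}S{\env S}=0$, reading the conjecture's $\min\{\dim S,1\}$ literally, whereas what is actually proved (Theorem~\ref{portmanteau:Gor}, and the abstract) hypothesizes vanishing for the $\dim S$ consecutive values $t<n\le t+\dim S$; this is exactly the range that \ref{tachikawa} consumes after the translation $\Ext n{\env S}S{\env S}\cong\Ext{n-t}SCS$. With a single vanishing the statement is not known for $\dim S\ge 2$ --- that is the content of the open conjecture --- so even with the correct engine in hand your argument would be claiming strictly more than the paper establishes.
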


The additional hypotheses allow one to apply a recent characterization of
Gorenstein rings from \cite{ABS} or \cite{HH}, in order to prove that
the ring $S$ is Gorenstein.  Classical properties of flat maps then
imply that the homomorphism $\sigma$ is Gorenstein.

\section{Gorenstein dimension}
\label{Gorenstein dimension}

In this paper rings are commutative.  For modules \emph {finite} means
finitely generated.  The results in this section concern an invariant
of complexes called G-dimension (for \emph{Gorenstein} dimension),
defined for finite modules by Auslander and Bridger~\cite{AB}.

Let $R$ be a commutative ring. We write $\dcat$ for the derived category of
$R$-modules. It objects are the complexes of $R$-modules of the form
\[
M=\quad\cdots \to M_{n+1} \xra{\dd_{n+1}} M_n\xra{\ \dd_n\ } M_{n-1}\to \cdots
\]
We write $M\xra{\simeq} N$ to flag a \emph{quasi-isomorphism}; that
is, a morphism of complexes inducing an isomorphism in homology.
The notation $M \simeq N$ means that $M$ and $N$ are linked by a chain
of quasi-isomorphisms; that is, they are isomorphic in $\dcat$.

A complex $M$ is \emph{homologically bounded} if $\HH iM=0$ for $|i|\gg0$;
it is said to be \emph{homologically finite} if, in addition, each
$R$-module $\HH iM$ is finite.

\begin{chunk}
\label{G-modules}
Let $R$ be a noetherian ring.

An $R$-module $G$ is said to be \emph{totally reflexive} if it satisfies
\begin{alignat*}{3}
    G&\cong&\Hom R{&\Hom RGR}R \quad &&\text{and}\\
    \Ext nRGR=0&=&\Ext nR{&\Hom RGR}R\quad &&\text{for all}\quad n\ge1\,.
\end{alignat*}
The \emph{G-dimension} of a homologically finite complex $M$ of $R$-modules 
is the number
\[
  \gdim_RM=\inf_n\left\{n\ge\sup\hh M \left|\,
      \begin{gathered}
        \coker(\dd^P_{n+1})\text{ is totally reflexive in some} \\
        \text{ semiprojective resolution }P\xra{\simeq}M
      \end{gathered} \right\}\right.\,.
\]
Finite projective modules are totally reflexive, so one has $\gdim_RM\le \pd_RM$.
Every finite $R$-module (equivalently, homologically finite complex)
has finite $G$-dimension if and only if the ring $R$ is Gorenstein;
see \cite[(4.20)]{AB}.
  \end{chunk}

\begin{chunk}
\label{gd=reflexive}
Foxby~\cite[(2.2.3)]{Ch} obtained an alternative characterization of complexes of
finite G-dimension: A homologically finite complex of $R$-modules $M$ has finite
$G$-dimension if and only the following two conditions hold.
\begin{enumerate}[\quad\rm(a)]
\item The canonical \emph{biduality map} in $\dcat$ is an isomorphism:
\[
    \delta^M\col M\xra{\simeq} \Rhom R{\Rhom RMR}R\,.
\]
\item The complex $\Rhom RMR$ is homologically bounded.
\end{enumerate}
Moreover, when these conditions hold one has
\begin{equation}
    \label{eq:gd=reflexive}
    \gdim_RM = \sup\{n\mid \Ext nRMR\ne0\}\,.
\end{equation}
Thus, a lower bound for $\gdim_RM$ is provided by the \emph{grade} of $M$,
defined by
\begin{equation}
\label{grade}
        \grade_RM = \inf\{n\mid \Ext nRMR\ne 0\}\,.
\end{equation}
When $M$ is a module its grade is equal to the maximal length of an
$R$-regular sequence contained in $\ann_RM$, its annihilator ideal.
 \end{chunk}

We prove a more flexible version of Foxby's characterization of finite
Gorenstein dimension. We write $\Max R$ for the set of maximal ideals
of $R$.

\begin{theorem}
\label{foxby}
Let $R$ be a noetherian ring and $M$ a complex with $\hh M$ finite.

The complex $M$ has finite $G$-dimension when the following conditions hold:
\begin{enumerate}[\quad\rm(a)]
\item For each maximal ideal $\fm$ in $R$, in $\dcat [R_\fm]$ there exists an isomorphism
\[
  M_\fm \simeq \Rhom {R_\fm}{\Rhom {R_\fm}{M_\fm}{R_\fm}}{R_\fm}\,.
\]
\item The complex $\Rhom RMR$ is homologically bounded or $\dim R$ is finite.
\end{enumerate}
\end{theorem}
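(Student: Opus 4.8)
The plan is to deduce from the hypotheses the two conditions in Foxby's criterion recalled in \ref{gd=reflexive}---that the biduality morphism $\delta^M\col M\to\Rhom R{\Rhom RMR}R$ is an isomorphism in $\dcat$, and that $\Rhom RMR$ is homologically bounded---and then conclude $\gdim_RM<\infty$. Hypothesis (a) will provide the first condition and hypothesis (b) the second; the genuine difficulty is confined to the case where only $\dim R<\infty$ is known.

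First I would show that (a) already forces $\delta^M$ to be an isomorphism; the content is that the \emph{abstract} isomorphism in (a) can be upgraded to the assertion that the \emph{canonical} map $\delta^M$ is one. As $\delta^M$ commutes with localization and a finite $R$-module vanishing at all maximal ideals is zero, it suffices to work over each $R_\fm$, so assume $R$ is local. The identity $\Rhom R{\delta^M}R\circ\delta^{\Rhom RMR}=\id$, valid over any ring, exhibits $\delta^{\Rhom RMR}$ as a split monomorphism in $\dcat$, so $\Rhom R{\Rhom R{\Rhom RMR}R}R\simeq\Rhom RMR\oplus Z$ for some $Z$. Applying $\Rhom R{-}R$ to (a) gives $\Rhom RMR\simeq\Rhom R{\Rhom R{\Rhom RMR}R}R$, hence $\Rhom RMR\simeq\Rhom RMR\oplus Z$; since $R$ is noetherian, each $\HH n{\Rhom RMR}$ is finite, and comparing minimal numbers of generators degree by degree yields $\HH nZ=0$ for all $n$, i.e.\ $Z\simeq0$, so $\Rhom R{\delta^M}R$ is an isomorphism. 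By (a) the complex $\Rhom R{\Rhom RMR}R\simeq M$ is homologically finite, so the mapping cone $C$ of $\delta^M$ is homologically finite; applying $\Rhom R{-}R$ to the exact triangle given by $\delta^M$ and using that $\Rhom R{\delta^M}R$ is an isomorphism shows $\Rhom RCR\simeq0$. A nonzero homologically finite complex over a noetherian local ring has nonzero $R$-dual (pass to a minimal free resolution; its $R$-dual is a nonzero bounded-above minimal complex of finite free modules, hence is not acyclic), so $C\simeq0$; thus $\delta^M$ is an isomorphism.

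There remains the boundedness of $\Rhom RMR$. If (b) supplies it, \ref{gd=reflexive} finishes the proof. Otherwise $\dim R<\infty$; boundedness is local, and over a local ring $\Rhom RMR$ is automatically bounded above, so assume $R$ is local of finite dimension and argue by induction on $\dim R$ that $\Rhom RMR$ is bounded below. For the inductive step, every non-maximal prime $\fp$ has $\dim R_\fp<\dim R$ and $\delta^{M_\fp}$ an isomorphism (being a localization of $\delta^M$), so $\gdim_{R_\fp}M_\fp<\infty$ by induction; the Auslander--Bridger equality bounds these uniformly by $\dim R-1+\sup\hh M$, so by \eqref{eq:gd=reflexive} the module $\Ext nRMR$ has finite length once $n>\dim R-1+\sup\hh M$. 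One must then see that this finite-length tail vanishes. For that I would complete $R$, bring in its dualizing complex, and use local duality, exploiting that $\delta^M$ is an isomorphism: a nonzero tail would survive the double dual and contradict the homological finiteness of $M\simeq\Rhom R{\Rhom RMR}R$. The base case $\dim R=0$, where $R$ is artinian (hence complete), is the delicate one and is handled by the same circle of ideas---controlling $\Rhom RMR$ from the reflexivity of $M$ via Matlis duality---since over a non-Gorenstein artinian ring $\Rhom R{-}R$ is otherwise unruly. Once $\Rhom RMR$ is homologically bounded, \ref{gd=reflexive} yields $\gdim_RM<\infty$.

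The principal obstacle is this last point when $\dim R<\infty$: passing from ``$\Ext nRMR$ has finite length for $n\gg0$'' to its vanishing there, equivalently the artinian base case. This is not a formal step, and it is exactly here that one must use the reflexivity of $M$ established above, not merely the abstract isomorphism assumed in (a).
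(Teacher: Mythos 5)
Your reduction of the theorem to Foxby's criterion \ref{gd=reflexive} is the right frame, and your upgrade of the abstract isomorphism in (a) to the statement that the canonical map $\delta^M$ is an isomorphism is essentially the paper's own argument: both hinge on the identity $\Rhom R{\delta^M}R\circ\delta^{\Rhom RMR}=\id$ and on the fact that a split monomorphism between isomorphic finite modules over a noetherian local ring is bijective. Two caveats there. First, the opening reduction ``$\delta^M$ commutes with localization, so assume $R$ is local'' is premature: the identification $(\delta^M)_\fm=\delta^{M_\fm}$ requires $\Rhom RMR$ to be homologically finite, which is exactly what is not yet known at that point; the paper invokes this identification only after boundedness has been secured, and in the local Claim it works with the abstract isomorphism of (a) rather than with a localized $\delta^M$. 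Second, even granting the local reduction, establishing that $\delta^M$ is an isomorphism is only half of \ref{gd=reflexive}; the other half is the homological boundedness of $\Rhom RMR$.

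That boundedness is the genuine gap, and you have located it yourself: passing from ``$\Ext nRMR$ has finite length for $n\gg0$'' to its vanishing there, equivalently the artinian base case of your induction, is the non-formal step, and ``complete $R$, bring in its dualizing complex, and use local duality'' is a hope rather than a proof. The paper closes this gap with Proposition \ref{finite:test}: over a local ring, if $X$ has finite homology modules and $\Rhom RXR$ is homologically bounded, then $\HH iX=0$ for $i\ll0$. Applied to $X=\Rhom{R_\fm}{M_\fm}{R_\fm}$ --- whose homology modules are finite and whose own dual is $\simeq M_\fm$, hence bounded, by (a) --- it yields the boundedness of $\Rhom{R_\fm}{M_\fm}{R_\fm}$ for every maximal ideal at once, with no induction on dimension and no special artinian case. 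Its proof is a genuine argument: an adjunction computation identifying $\Rhom Rk{\Rhom RXR}$ with $\Hom k{\hh{k\dtensor RX}}{\Ext{}RkR}$, which forces $\HH i{k\dtensor RX}=0$ for $i\ll0$, followed by a Koszul-complex and Nakayama bootstrap descending from $k\dtensor RX$ to $X$. Nothing in your sketch substitutes for this. Your handling of the global finite-dimensional case --- uniform bounds on the local G-dimensions via the Auslander--Bridger equality --- does match the paper's, but it presupposes the local statement just discussed.
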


As a corollary we show that for certain modules the finiteness of
G-dimension can be read off their cohomological invariants.

\begin{corollary} \label{invertible} Let $M$ be a finite $R$-module.

If for each $\fm\in\Max R$ there exists an integer $d(\fm)$, such that one has
\[
 \Ext n{R_{\fm}}{M_{\fm}}{R_{\fm}}\cong
\begin{cases}
  M_{\fm} &\text{for}\quad n=d(\fm)\,;  \\
  0 &\text{for}\quad n\ne d(\fm)\,,
\end{cases}
\]
then the following inequalities hold:
\[
\gdim_RM \le\sup\left\{\height\, \fp \,\left|
\begin{gathered}
 \fp\text{ a prime ideal in }R \\
            \text{minimal over }\ann_RM
\end{gathered}
   \right\}\right.<\infty\,.
\]
\end{corollary}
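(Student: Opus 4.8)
The plan is to deduce the two inequalities from Theorem~\ref{foxby} and the formula~\eqref{eq:gd=reflexive}. Write $s$ for the supremum on the right-hand side (and assume $M\ne0$). The finiteness $s<\infty$ is immediate: $R$ is noetherian, so $\ann_RM$ has only finitely many minimal primes $\fp_1,\dots,\fp_r$, each of finite height, whence $s=\max_i\height\fp_i<\infty$. The substance of the proof is the bound $\gdim_RM\le s$, and I would begin by analysing the integers $d(\fm)$.

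Fix $\fm\in\Supp M\cap\Max R$ and a minimal prime $\fp$ of $\ann_RM$ with $\fp\subseteq\fm$. Since $M_\fm$ is finite over the noetherian ring $R_\fm$, Ext commutes with the localization $R_\fm\to R_\fp$, so the hypothesis shows that $\Ext n{R_\fp}{M_\fp}{R_\fp}$ vanishes for $n\ne d(\fm)$ and is isomorphic to $M_\fp$ for $n=d(\fm)$. As $\fp\in\Supp M$, the module $M_\fp$ is nonzero, so $d(\fm)$ is the only degree $n$ with $\Ext n{R_\fp}{M_\fp}{R_\fp}\ne0$; in particular $d(\fm)$ depends on $\fp$ alone---call it $d_\fp$---and $d_\fp\ge0$ because Ext vanishes in negative degrees. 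Hence $\{d(\fm)\mid\fm\in\Supp M\cap\Max R\}\subseteq\{d_{\fp_1},\dots,d_{\fp_r}\}$. Now $\Ext nRMR$ is a finite $R$-module, and it localizes to $\Ext n{R_\fm}{M_\fm}{R_\fm}$ at each maximal ideal $\fm$; the latter is nonzero only when $\fm\in\Supp M$ and $n=d(\fm)$, so $\Ext nRMR=0$ for every integer $n$ outside $\{d_{\fp_1},\dots,d_{\fp_r}\}$. In particular $\Rhom RMR$ is homologically bounded.

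Next I would check the hypotheses of Theorem~\ref{foxby}. Condition~(b) is the boundedness just obtained. For condition~(a), let $\fm$ be a maximal ideal: if $\fm\notin\Supp M$ then both sides vanish; if $\fm\in\Supp M$ then the hypothesis says $\Rhom{R_\fm}{M_\fm}{R_\fm}$ has homology concentrated in the single cohomological degree $d(\fm)$ with value $M_\fm$, so $\Rhom{R_\fm}{M_\fm}{R_\fm}\simeq\susp^{-d(\fm)}M_\fm$ in $\dcat[R_\fm]$, and applying $\Rhom{R_\fm}{-}{R_\fm}$ together with the shift rule $\Rhom{R_\fm}{\susp^aX}{R_\fm}\simeq\susp^{-a}\Rhom{R_\fm}{X}{R_\fm}$ twice returns $M_\fm$. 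Thus Theorem~\ref{foxby} applies and $\gdim_RM<\infty$, so~\eqref{eq:gd=reflexive} gives $\gdim_RM=\sup\{n\mid\Ext nRMR\ne0\}$; by the previous paragraph this equals $\max_i d_{\fp_i}$, each $d_{\fp_i}$ being attained because $\Ext{d_{\fp_i}}RMR$ localizes to $M_{\fp_i}\ne0$ at $\fp_i$.

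It remains to bound $d_{\fp_i}$. Since $\Ext n{R_{\fp_i}}{M_{\fp_i}}{R_{\fp_i}}$ is nonzero precisely for $n=d_{\fp_i}$, the definition~\eqref{grade} gives $\grade_{R_{\fp_i}}M_{\fp_i}=d_{\fp_i}$; and because $\fp_i$ is minimal over $\ann_RM$ the module $M_{\fp_i}$ has finite length over $R_{\fp_i}$, so $\ann_{R_{\fp_i}}M_{\fp_i}$ has radical $\fp_iR_{\fp_i}$ and---grade depending only on the radical of an ideal---a maximal $R_{\fp_i}$-regular sequence inside it has length $\depth R_{\fp_i}$. Therefore $d_{\fp_i}=\depth R_{\fp_i}\le\dim R_{\fp_i}=\height\fp_i$, and $\gdim_RM=\max_i d_{\fp_i}\le\max_i\height\fp_i=s$, as required. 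The one genuinely delicate point, I expect, is the bookkeeping of the second paragraph---that $d(\fm)$ is governed by a single minimal prime of $\ann_RM$, so that $\Rhom RMR$ has homology in only finitely many degrees; after that the argument is a routine assembly of Theorem~\ref{foxby}, \eqref{eq:gd=reflexive} and \eqref{grade}.
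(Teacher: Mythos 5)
Your proof is correct and follows essentially the same route as the paper's: verify conditions (a) and (b) of Theorem~\ref{foxby} (the biduality isomorphism via the shift computation, and the homological boundedness of $\Rhom RMR$ via the vanishing of $\Ext nRMR$ outside a bounded range of degrees), then conclude with \eqref{eq:gd=reflexive}. The only cosmetic difference is in how the bound on the integers $d(\fm)$ is obtained: the paper argues directly that $d(\fm)=\grade_{R_\fm}M_\fm\le\height(\ann_{R_\fm}M_\fm)$, which Krull's Principal Ideal Theorem bounds by the stated supremum, whereas you localize further at the minimal primes $\fp_i$ of $\ann_RM$ and use $d_{\fp_i}=\depth R_{\fp_i}\le\height\fp_i$; both come down to the same grade--height estimate.
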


\begin{proof}
  The hypothesis implies $\Rhom{R_\fm}{M_\fm}{R_\fm}\simeq\Shift^{d(\fm)}M_{\fm}$, and
  hence one has
\begin{align*}
        \Rhom{R_\fm}{\Rhom{R_\fm}{M_\fm}{R_\fm}}{R_\fm}
        &\simeq\Rhom{R_\fm}{\Shift^{d(\fm)}M_\fm}{R_\fm}\\
        &\simeq\Shift^{-d(\fm)}\Rhom{R_\fm}{M_\fm}{R_\fm}\\
        &\simeq M_\fm
\end{align*}
in the derived category $\dcat[R_\fm]$.  In particular, setting
\[
      h=\max\{\height\,\fp\mid \fp\in\Spec R \text{ is minimal over }\ann_RM \}\,,
\]
from \eqref{grade} and Krull's Principal Ideal Theorem one obtains inequalities
\[
      d(\fm) = \grade_{R_\fm}M_\fm \le \height (\ann_{R_\fm}M_\fm) \le h\,.
\]
They imply $\Ext nRMR=0$ for $n>h$, so $\hh{\Rhom RMR}$ is bounded. Now
one gets $\gdim_RM<\infty$ from Theorem~\ref{foxby}, and then $\gdim_RM\le
h$ from \eqref{eq:gd=reflexive}.
 \end{proof}

The proof of the theorem uses a result of independent interest:

\begin{proposition}
\label{finite:test}
Let $R$ be local ring and $X$ a complex with each $R$-module $\HH
iX$ finite.  If $\HH i{\Rhom RXR}=0$ for $|i|\gg0$, then $\HH iX=0$
for $i\ll 0$.
 \end{proposition}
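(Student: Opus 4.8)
The plan is to argue by contradiction: assume $\HH iX\ne0$ for arbitrarily small $i$, and show that $\HH i{\Rhom RXR}$ must then be nonzero for arbitrarily small $i$ as well, contradicting the hypothesis. First I would reduce to a convenient representative of $X$. Truncating, we may assume $X$ is a complex of finite free $R$-modules; since $\Rhom RXR$ is then computed by $\Hom RXR$, the question becomes purely about the naive dual complex. Replacing $X$ by a minimal free resolution of a suitable hard truncation, the key point is that $X$ can be taken to be a complex of finite free modules with \emph{minimal} differential—that is, $\dd(X)\subseteq\fm X$, where $\fm$ is the maximal ideal of the local ring $R$. Minimality is the crucial device: for a minimal complex of finite free modules, applying $\Hom R-k$ (with $k=R/\fm$) kills the differential, so $\HH i{\Hom RXk}\cong\Hom R{X_i}k$ has rank equal to $\rank_R X_i$.

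Next I would unwind what the hypothesis gives. Writing $X^\vee=\Hom RXR$, the dual complex is again minimal (dualizing a minimal complex of finite free modules preserves minimality), so $\HH i{X^\vee}=0$ for $i\ll0$ forces $X^\vee_i=0$, hence $X_{-i}=0$, for $i\ll0$; equivalently $X_i=0$ for $i\gg0$. But that is not yet what we want—we need control at the \emph{other} end. The real content is that if $X$ is a nonzero minimal complex of finite free modules that is bounded above but unbounded below, then $X^\vee$ is unbounded above, and I must show its homology does not vanish in high degrees. This is where I would bring in the structure: since $X^\vee$ is minimal, $\HH i{X^\vee\otimes_R k}\cong X^\vee_i\otimes_R k\ne0$ whenever $X^\vee_i\ne0$, i.e.\ whenever $X_{-i}\ne0$; and if these did not force $\HH i{X^\vee}\ne0$ then the differential out of $X^\vee_i$ would have to be surjective after $\otimes k$—impossible by minimality once $X^\vee_i\ne0$. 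Thus as long as $X^\vee$ has nonzero terms in arbitrarily high degrees, its homology is nonzero there; pulling this back, $\HH i{X^\vee}=0$ for $i\gg0$ forces $X_i=0$ for $i\ll0$, and then the minimal complex $X$ is bounded below, so $\HH iX=0$ for $i\ll0$.

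The main obstacle is the reduction step: the given $X$ is only assumed to have finite homology in each degree, with no boundedness on either side, so a single minimal free resolution need not exist. I would handle this by working with a soft truncation $X'=(\cdots\to X_{n+1}\to X_n\to 0)$ for $n$ large: $X'$ has homology bounded above and finite in each degree, hence admits a minimal semifree resolution $F\xra{\simeq}X'$ by finite free modules, and $\Rhom R{X'}R\simeq\Hom RFR$. Comparing $\Rhom RXR$ with $\Rhom R{X'}R$ via the triangle relating $X$, $X'$, and the bounded-above complex $X/X'$ shows that $\HH i{\Rhom RXR}$ and $\HH i{\Rhom R{X'}R}$ agree for $i\ll0$; so the hypothesis passes to $X'$, and the minimality argument above applies to $F$. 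Care is needed to verify that $\HH iX=0$ for $i\ll0$ can be detected after truncation, but since $X$ and $X'$ agree in all sufficiently low degrees this is automatic. The delicate bookkeeping is entirely in these truncation comparisons; the homological heart—minimality forcing nonvanishing of homology at the top of the dual—is short.
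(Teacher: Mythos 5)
Your central step does not hold up. You assert that for a minimal complex $G$ of finite free modules (here $G=\Hom RFR$), the nonvanishing of a term $G_i$ forces $\HH iG\ne 0$, because otherwise ``the differential out of $G_i$ would have to be surjective after $\otimes k$.'' This conflates the homology of $G$ with that of $G\otimes_Rk$: vanishing of $\HH iG$ means every cycle in $G_i$ is a boundary, and imposes no surjectivity on $\partial_i$. The claim is false: already $0\to R\xra{\ x\ }R\to 0$ with $x\in\fm$ a nonzerodivisor is a minimal complex with zero homology in its top degree, and over a non-regular local ring the minimal free resolution $F$ of $k$ is a minimal complex of finite free modules with $F_i\ne0$ for all $i\ge 0$ yet $\HH iF=0$ for all $i\ge1$. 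The Nakayama-type implication you want is valid only at the end where the complex terminates: if $G_i=0$ for $i<m$ and $\HH mG=0$, then $G_m=\partial(G_{m+1})\subseteq\fm G_m$ forces $G_m=0$, and one climbs upward. But $\Hom RFR$ is bounded at the wrong end for your purposes; at its unbounded end it can look exactly like a free resolution of its module of cycles, with no homology at all, so minimality alone never produces the contradiction. (The truncation bookkeeping is also off---you keep the high homological degrees but later claim $X$ and $X'$ agree in low degrees---but that is repairable; the minimality step is not.)

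What is missing is an input that exploits the double-dual structure rather than minimality of a single dual. The paper's proof applies $\Rhom Rk-$ to the hypothesis and uses the adjunctions
\[
\Rhom Rk{\Rhom RXR}\simeq\Rhom R{k\dtensor RX}R\simeq \Rhom k{k\dtensor RX}{\Rhom RkR}\,,
\]
together with the nonvanishing of $\Ext {}RkR$ --- a genuine theorem about noetherian local rings --- to conclude first that $\HH i{k\dtensor RX}=0$ for $i\ll0$; only afterwards does a Koszul-complex and Nakayama argument descend from $k\dtensor RX$ to $X$. Your minimal-resolution picture is a reasonable way to carry out that final descent, but it cannot substitute for the first step.
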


\begin{proof}
Let $k$ be the residue field of $R$.  As $\HH i{\Rhom RXR}=0$ for $i\gg 0$ one has
\[
       \HH n{\Rhom Rk{\Rhom RXR}}=0\quad\text{for}\quad n\gg 0\,.
\]
On the other hand, adjunctions yield the first two isomorphisms below:
\begin{align*}
         \HH n{\Rhom Rk{\Rhom RXR}}
         &\cong \HH n{\Rhom  R{k\dtensor RX}R} \\
         &\cong \HH n{\Rhom k{k\dtensor RX}{\Rhom RkR}}\\
         &\cong {\Hom k{\hh{k\dtensor RX}}{\Ext {}RkR}}_n\\
         &= \prod_{j+i=-n}\Hom k{\HH i{k\dtensor RX}}{\Ext jRkR}\,.
\end{align*}
As $\Ext {}RkR\ne0$, it follows that one has $\HH i{k\dtensor RX}=0$
for $i\ll 0$.  At this point, one can conclude that $\HH iX=0$ holds
for $i\ll 0$ by invoking \cite[(4.5)]{FI}.  What follows is a direct
and elementary proof, which exploits an idea from \cite[(5.12)]{DGI}.

The first step is to verify that every complex $C$ with $\hh C$ of finite
length has $\HH i{C\otimes_RX}=0$ for $i\ll 0$. In the case where $C$
is a module, this is achieved by an obvious induction on its length. The
general case is settled by induction on the number of non-zero homology
modules of $C$.

Let now $C$ be the Koszul complex on a subset $\bsx=\{x_1,\dots,x_e\} $ of $R$. Recall
that $C$ is equal to $C'\otimes_RC''$, where $C'$ and $C''$ are Koszul complexes on
$\{x_1,\dots,x_{e-1}\}$ and $x_e$, respectively.  Thus, one has an exact sequence of
complexes
\[
    0\to C'\otimes_R X \to C\otimes_R X\to \Shift (C'\otimes_RX)\to 0\,.
\]
Its homology exact sequence yields exact sequences of $R$-modules
\[
       0\lra\HH i{C'\otimes_RX}/x_e\HH i{C'\otimes_RX} \lra\HH i{C\otimes_RX} \lra
       \HH{i-1}{C'\otimes_RX}
\]
By induction on $e$, one deduces that each $R$-module $\HH i{C\otimes_RX}$
is finite.

For the final step, choose $\bsx$ to generate the maximal ideal of $R$.
The length $\hh C$ then is finite, so the first step yields $\HH
i{C\otimes_RX}=0$ for $i\ll 0$.  The exact sequence above then gives
$\HH n{C'\otimes_RX}/x_e\HH n{C'\otimes_RX}=0$ for $i\ll0$, which
implies $\HH i{C'\otimes_RX}=0$ for $i\ll0$, by Nakayama's Lemma.
Splitting off one element $x_j$ at a time, we arrive at $\HH i{X}=0$
for $i\ll0$, as desired.  \end{proof}

\begin{proof}[Proof of Theorem~\emph{\ref{foxby}}]
  Set $(-)^*=\Rhom R-R$.  We start with the local case:

\begin{Claim}
  Assume that $R$ is a local ring.  If $\hh M$ is finite and there exits
  an isomorphism $\mu\col M\to M^{**}$ in $ \dcat$, then $\gdim_RM$
  is finite.
\end{Claim}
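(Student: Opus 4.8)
The plan is to verify, for the \emph{canonical} biduality morphism $\delta^M\colon M\to M^{**}$, the two conditions of Foxby's criterion recalled in~\ref{gd=reflexive}: that $M^*=\Rhom RMR$ is homologically bounded and that $\delta^M$ is an isomorphism in $\dcat$. Since the given isomorphism $\mu$ is unspecified and a priori unrelated to $\delta^M$, the crux will be to upgrade $\mu$ to $\delta^M$.

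First I would bound $M^*$. As $\hh M$ is finite, $M$ is isomorphic in $\dcat$ to a complex $P$ of finite free modules with $P_i=0$ for $i\ll0$, so $M^*\simeq\Hom RPR$ is a complex of finite free modules vanishing in high homological degrees; hence each $\HH i{M^*}$ is finite and $\HH i{M^*}=0$ for $i\gg0$. For the other direction I would apply Proposition~\ref{finite:test} with $X=M^*$: its dual $X^*=M^{**}$ is isomorphic to $M$, so $\HH i{X^*}=0$ for $|i|\gg0$, and the proposition gives $\HH i{M^*}=0$ for $i\ll0$. Thus $M^*$, and with it $M^{***}=(M^*)^{**}$, is homologically finite; note also that $(-)^*$ applied to $\mu$ is an isomorphism $M^{***}\simeq M^*$.

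Next I would show $\delta^M$ is an isomorphism, the key input being the standard identity $(\delta^M)^*\circ\delta^{M^*}=\id_{M^*}$ (see \cite{Ch}; it can also be checked directly on $P$). It makes $(\delta^M)^*\colon M^{***}\to M^*$ a split epimorphism, so $M^{***}\simeq M^*\oplus C$ with $C$ a direct summand of the homologically finite complex $M^{***}$, hence homologically finite. Comparing homology with the isomorphism $M^{***}\simeq M^*$ yields $\HH iC\oplus\HH i{M^*}\cong\HH i{M^*}$ for every $i$; since each $\HH i{M^*}$ is a finite module over the noetherian ring $R$, it is hopfian, whence $\HH iC=0$. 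So $C\simeq0$, $(\delta^M)^*$ is an isomorphism, and therefore so is its section $\delta^{M^*}$. Applying the same identity with $M^*$ in place of $M$, namely $(\delta^{M^*})^*\circ\delta^{M^{**}}=\id_{M^{**}}$, shows $\delta^{M^{**}}$ is an isomorphism; and naturality of $\delta$ along $\mu$ gives $\delta^{M^{**}}\circ\mu=\mu^{**}\circ\delta^M$, so $\delta^M$ is an isomorphism because $\mu$, $\mu^{**}$ and $\delta^{M^{**}}$ are.

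With $\delta^M$ an isomorphism and $M^*$ homologically bounded, \ref{gd=reflexive} gives $\gdim_RM<\infty$, which proves the Claim. The main obstacle is the middle step---passing from an arbitrary self-duality isomorphism to the canonical one---where the triangle identity for $\delta$ together with the hopfian property of finite modules does the real work; and it is Proposition~\ref{finite:test} that lets the boundedness of $M^*$ follow from the existence of $\mu$ rather than being imposed separately.
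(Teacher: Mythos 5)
Your argument is correct and follows the paper's own proof essentially step for step: bound $M^*$ above using a resolution, apply Proposition~\ref{finite:test} to $X=M^*$ (whose dual is $M$ via $\mu$) to bound it below, then upgrade $\mu$ to the canonical $\delta^M$ via the identity $(\delta^M)^*\delta^{M^*}=\id_{M^*}$, the hopfian property of finite modules, and naturality of $\delta$ along $\mu$. The only cosmetic difference is that you split off a summand $C$ of the complex $M^{***}$ where the paper argues degreewise that each $\HH n{\delta_{M^*}}$ is a split monomorphism between isomorphic finite modules; these are the same argument.
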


Indeed, as $\hh M$ is finite, one has $\HH i{M^*}=0$ for $i\gg 0$, and
the $R$-module $\HH i{M^*}$ is finite for each $i$.  The isomorphism
$\mu$ and Proposition~\ref{finite:test} yield $\HH i {M^*}=0$ for $i\ll
0$. Moreover, one has $\HH i{M^*}=0$ for $i\gg 0$.  It thus remains
to prove that the biduality morphism $\delta_M\col M\to M^{**}$ is an
isomorphism; see~\ref{gd=reflexive}.

The composition $(\delta_M)^*\delta_{M^*}$ is the identity map of $M^*$,
so for each $n\in\BZ$ the map $\HH n{\delta_{M^*}}\col\HH n{M^*}\to\HH
n{M^{***}}$ is a split monomorphism.  On the other hand, $\mu$ induces an
isomorphism $\HH n{M^{***}}\cong\HH n{M^*}$ for each $n\in\BZ$.  As the
$R$-module $\HH n{M^*}=\Ext{-n}RMR$ is finite, we conclude that $\HH
n{\delta_{M^*}}$ is bijective.  Thus, $\delta_{M^*}$ is an isomorphism
in $\dcat$, and hence so is $\delta_{M^{**}}$.  Since the square
\[
     \xymatrixrowsep{2.5pc} \xymatrixcolsep{3pc} \xymatrix{ M\ar@{->}[r]^{\mu}_{\simeq}
       \ar@{->}[d]_{\delta_M}
       & M^{**} \ar@{->}[d]_{\simeq}^{\delta_{M^{**}}}\\
       M^{**}\ar@{->}[r]^{\mu^{**}}_{\simeq} & M^{****}}
\]
in $\dcat$ commutes, we see that $\delta_M$ is an isomorphism, as desired.

This completes the proof of the claim.

At this point, we can conclude that when condition (a) holds,
the number $\gdim_{R_\fm}M_\fm$ is finite for each $\fm\in\Max R$.
When $\Rhom RMR$ is homologically bounded, it is homologically finite,
so one has $(\delta_M)_\fm = \delta_{M_\fm}$ for each $\fm\in\Max R$. As
$\delta_{M_\fm}$ is an isomorphism, so is $\delta_M$, that is to say,
$\gdim_RM$ is finite; see~\ref{gd=reflexive}.

It remains to prove the theorem when $\dim R$ is finite.
Since $\gdim_{R_\fm}M_\fm$ is finite for each $\fm\in \Max R$,
from \eqref{eq:gd=reflexive} we get the second equality below:
\begin{align*}
       - \inf \hh{\Rhom RMR}_\fm & = -\inf \hh{\Rhom {R_\fm}{M_\fm}{R_\fm}} \\
       & = \gdim_{R_\fm}M_\fm \\
       & = \depth R_\fm - \depth_{R_\fm}M_\fm  \\
       &\le \dim R_\fm + \sup{\hh{M_\fm}}  \\
       &\le \dim R + \sup{\hh{M}}
\end{align*}
The third one is the Auslander-Bridger Equality~\cite[(2.3.13)]{Ch},
with depth for complexes defined as in \cite[\S{A.6}] {Ch}.  The first
inequality follows from \cite[(A.6.1.1)]{Ch}, and the other inequality
is evident. Thus, $\HH i{\Rhom RMR}$ vanishes for $i\ll0$, so part (a)
applies and $\gdim_RM$ is finite.

This completes the proof of the theorem.
\end{proof}

\section{Quasi-Gorenstein homomorphisms}
\label{Quasi-Gorenstein homomorphisms}

In this section $\sigma\col K\to S$ denotes a homomorphism of rings.

One says that $\sigma$ is (\emph{essentially}) \emph{of finite type}
if it can be factored as $\sigma=\pi\varkappa$, where $\varkappa$ is the
canonical map to a (localization of a) polynomial ring in finitely many
indeterminates, and $\pi$ is a surjective homomorphism of rings.

A homomorphism of rings is \emph{local} if its source and target are local
rings, and it maps the unique maximal ideal of the source into that of the
target.  The \emph{localization} of $\sigma$ at a prime ideal $\fn$ of $S$
is the induced local homomorphism $\sigma_\fn\col K_{\fn\cap K}\to S_\fn$.

We say that $\sigma$ is \emph{Gorenstein at} some $\fn\in\Spec S$
if $\sigma_\fn$ is flat and the local ring $S_\fn/(\fn\cap K)S_\fn$
is Gorenstein; $\sigma$ is \emph{Gorenstein} when this holds for every
$\fn\in\Spec S$.

Recall that $\vf$ is called (\emph{essentially}) \emph{smooth} if it is
(essentially) of finite type, flat, and with geometrically regular fibers.
Such maps are evidently Gorenstein.

A \emph{Gorenstein{-}by{-}finite factorization} of $\sigma$ is an equality
$\sigma= \pi\varkappa$, where $\varkappa$ and $\pi$ are homomorphisms of
rings with $\varkappa$ Gorenstein and $\pi$ finite.   In a similar vein,
we speak of (essentially) smooth{-}by{-}finite factorizations, etc.
Each homomorphism (essentially) of finite type has (essentially)
smooth{-}by{-}surjective decompositions.

Finally, we recall some notions and results from \cite{AF:qG}.

\begin{chunk}
\label{factorizations}
Let $\sigma$ be a homomorphism essentially of finite type.  We say that
it has \emph{finite G-dimension} at some $\fn\in\Spec S$, and write
$\gdim\sigma_\fn<\infty$, if for some Gorenstein{-}by{-}surjective
factorization $K_{\fn\cap K}\to P'\to S_\fn$ of $\sigma_\fn$
one has $\gdim_{P'}S_\fn<\infty$.  This property does not
depend on the choice of factorization; see \cite[(4.3)]{AF:qG}.  It holds,
for instance, when $\sigma_\fn$ is flat or when $K$ is Gorenstein;
see \cite[(4.4.1), (4.4.2)]{AF:qG}.

When $\gdim_{P'}S_\fn$ is finite the complex $\Rhom
{P'}{S_\fn}{P'}$ does not depend on factorizations,
up to isomorphism and shift in $\dcat[S_\fn]$; see \cite[(6.6), (6.7),
(5.5)]{AF:qG}.

We say that $\sigma$ is \emph{quasi-Gorenstein} at $\fn$ if it has finite
G-dimension at $\fn$, and $\Rhom {P'}{S_\fn}{P'}$ is isomorphic in 
$\dcat[S_\fn]$ to some shift of $S_\fn$.  When this holds
at each $\fn\in\Spec S$ we say that it is \emph{quasi-Gorenstein}; see
\cite[(7.8)]{AF:qG}.  When $\sigma_\fn$ is flat, it is quasi-Gorenstein at
$\fn$ if and only if it is Gorenstein at $\fn$; see \cite[(8.1)]{AF:qG}.
 \end{chunk}

We give new characterizations of (quasi-)Gorenstein homomorphisms.

\begin{theorem}
\label{finite:gorcriteria}
Let $K$ be a noetherian ring, $\sigma\col K\to S$ a homomorphism of rings,
and $K\to P\to S$ a Gorenstein{-}by{-}finite factorization.

The homomorphism $\sigma$ is quasi-Gorenstein if and only if the
graded $S$-module $\Ext{}{P}S{P}$ is invertible; when it is, one has
$\gdim_PS\le\height\ann_PS$.
\end{theorem}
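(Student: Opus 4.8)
The plan is to reduce the global statement to local statements at maximal ideals of $S$, where the language of \ref{factorizations} applies directly, and then to glue. The module $\Ext{}{P}S{P}$ is \emph{invertible} means it is finitely generated, graded, and locally free of rank one in a single degree; concretely, for each $\fm\in\Max S$ there is an integer $d(\fm)$ such that $\Ext n{P_{\fm\cap P}}{S_\fm}{P_{\fm\cap P}}$ is $\cong S_\fm$ for $n=d(\fm)$ and vanishes otherwise. So the first step is to observe that for a Gorenstein{-}by{-}finite factorization $K\to P\to S$ and a prime $\fn\in\Spec S$, localizing gives a Gorenstein{-}by{-}finite factorization $K_{\fn\cap K}\to P_{\fn\cap P}\to S_\fn$ of $\sigma_\fn$, and that $\gdim_{P}S<\infty$ automatically here since $P$ is Gorenstein (via \cite[(4.4.2)]{AF:qG}) and $S$ is finite over $P$. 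Thus $\Rhom PSP$ is homologically finite, and $(\Rhom PSP)_\fn\simeq\Rhom{P_{\fn\cap P}}{S_\fn}{P_{\fn\cap P}}$ computes the dualizing complex attached to $\sigma_\fn$ in the sense of \ref{factorizations}.

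For the forward direction, assume $\sigma$ is quasi-Gorenstein. Then at each $\fn$, $\Rhom{P_{\fn\cap P}}{S_\fn}{P_{\fn\cap P}}$ is isomorphic to a shift $\Shift^{-d(\fn)}S_\fn$ of $S_\fn$; this says exactly that $\Ext n{P}S{P}_\fn$ is free of rank one over $S_\fn$ concentrated in degree $d(\fn)$, and zero otherwise. It remains to see that the locally constant function $\fn\mapsto d(\fn)$ is constant on each connected component of $\Spec S$ and that the module is finitely generated — but finiteness is clear since $S$ is a finite $P$-module and $P$ is noetherian, and local constancy of $d$ plus $S$-locally-free-of-rank-one is the definition of invertible. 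For the converse, assume $\Ext{}PSP$ is invertible. Then Corollary~\ref{invertible}, applied to the finite $P$-module $S$, shows $\gdim_PS\le\height\ann_PS<\infty$; in particular $\gdim_PS$ is finite, so $\sigma$ has finite G-dimension at every $\fn$. Moreover localizing the invertibility at $\fn$ shows $\Rhom{P_{\fn\cap P}}{S_\fn}{P_{\fn\cap P}}\simeq\Shift^{-d(\fn)}S_\fn$, which is precisely the condition that $\sigma$ is quasi-Gorenstein at $\fn$; since this holds for all $\fn$, the map $\sigma$ is quasi-Gorenstein. The bound $\gdim_PS\le\height\ann_PS$ is then the inequality from Corollary~\ref{invertible}.

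The main subtlety I expect is the bookkeeping of \emph{gradings} versus \emph{shifts}: the statement packages the modules $\Ext nPSP$ into a single graded $S$-module, and ``invertible'' must be read in the graded sense — concentrated in one degree locally, with that degree locally constant. Matching this with the ``isomorphic to a shift of $S_\fn$'' formulation of quasi-Gorenstein in \ref{factorizations} requires checking that the shift amount $d(\fn)$ is an honest locally constant $\BZ$-valued function, which follows because it equals $\grade_{P_{\fn\cap P}}S_\fn$ by~\eqref{grade} and~\eqref{eq:gd=reflexive}, and grade is locally constant in this setting (by a flatness/connectedness argument, or directly because the dualizing complex of $\sigma$ is well-defined up to a single shift on each component by \cite[(6.6),(6.7),(5.5)]{AF:qG}). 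A secondary point is ensuring that the local-to-global passage for the biduality isomorphism is legitimate, but since $\Rhom PSP$ is homologically finite this is standard and is exactly the situation covered by Theorem~\ref{foxby}(a) together with the finiteness of $\dim$-free hypothesis. Everything else is routine localization and Krull's principal ideal theorem, already packaged in Corollary~\ref{invertible}.
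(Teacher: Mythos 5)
Your argument follows essentially the same route as the paper's proof: localize, match the shift-isomorphisms in the definition of quasi-Gorenstein against local freeness of rank one concentrated in a single degree, use the finiteness of $S$ over $P$ to globalize (this is exactly what Lemma~\ref{graded-invertibles} packages), and in the converse direction feed the localized data into Corollary~\ref{invertible} to obtain finite G-dimension together with the bound $\gdim_PS\le\height\ann_PS$. Two small corrections are in order. First, the parenthetical claim that $\gdim_PS<\infty$ holds automatically ``since $P$ is Gorenstein'' is false: only the homomorphism $K\to P$ is assumed Gorenstein, the ring $P$ itself need not be Gorenstein, and for instance $K=P$ a non-Gorenstein local ring with $S=K/\fm$ gives $\gdim_PS=\infty$; fortunately this claim is not load-bearing, since finiteness of G-dimension is part of the hypothesis in the forward direction and is precisely what Corollary~\ref{invertible} delivers in the converse. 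Second, at the end of the converse you assert that the isomorphism $\Rhom{P_{\fn\cap P}}{S_\fn}{P_{\fn\cap P}}\simeq\Shift^{-d(\fn)}S_\fn$ ``is precisely'' the condition that $\sigma$ is quasi-Gorenstein at $\fn$; since \ref{factorizations} phrases the definition via Gorenstein-by-\emph{surjective} factorizations, you should either explicitly invoke the factorization-independence recorded there for the Gorenstein-by-finite factorization at hand, or argue as the paper does, namely conclude that $P\to S$ is quasi-Gorenstein and then compose with the quasi-Gorenstein map $K\to P$ using \cite[(8.9)]{AF:qG}.
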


The notion of invertible graded  module is explained further below.

\begin{corollary}
\label{cor:gor}
When $\Spec S$ is connected the map $\sigma$ is quasi-Gorenstein
if and only if the $S$-module $\Ext{n}{P}S{P}$ is invertible for
$n=\grade_PS$, and zero otherwise.
 \end{corollary}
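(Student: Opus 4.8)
The plan is to read the corollary off Theorem~\ref{finite:gorcriteria}, which already supplies the equivalence of quasi-Gorensteinness with invertibility of the graded $S$-module $E=\Ext{}PSP=\bigoplus_{n\in\BZ}\Ext nPSP$. All that remains is to translate ``$E$ is an invertible graded module'' into ``$\Ext nPSP$ vanishes for $n\ne\grade_PS$ and is invertible for $n=\grade_PS$'', and it is precisely here that connectedness of $\Spec S$ enters. Set $g=\grade_PS$.

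First I would invoke the local description of invertible graded modules recalled below: such an $E$ is finite, and for every $\fp\in\Spec S$ the localization $E_\fp$ is free of rank one over $S_\fp$ on a homogeneous generator, hence is concentrated in a single degree $d(\fp)\in\BZ$. The function $d\colon\Spec S\to\BZ$ is locally constant for the Zariski topology: as $E$ is finite, only finitely many of the $S$-modules $\Ext nPSP$ are nonzero and each of them is finite, so the vanishing of $(\Ext nPSP)_\fp$ for $n\ne d(\fp)$ yields an element $s\notin\fp$ with $s\cdot\Ext nPSP=0$ for all $n\ne d(\fp)$; then $d$ is constant, equal to $d(\fp)$, on the Zariski-open neighbourhood of $\fp$ on which $s$ is invertible. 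Since $\Spec S$ is connected, $d$ is a constant, say $d$. Hence $\Ext nPSP=0$ for $n\ne d$, while $E_d=\Ext dPSP$ is finite and locally free of rank one, that is, an invertible $S$-module; in particular $\Ext dPSP\ne0$, so \eqref{grade} gives $d=\inf\{n\mid\Ext nPSP\ne0\}=g$. Combined with Theorem~\ref{finite:gorcriteria}, this settles the ``only if'' implication.

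For the converse, assume $\Ext nPSP$ is an invertible $S$-module for $n=g$ and vanishes for $n\ne g$. Then $E$ is a graded $S$-module concentrated in the single degree $g$ with invertible component there, so it is invertible in the graded sense, a graded inverse being $\Hom S{\Ext gPSP}S$ placed in degree $-g$. Theorem~\ref{finite:gorcriteria} then gives that $\sigma$ is quasi-Gorenstein.

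The only step demanding genuine care is the local constancy of $d$ and its consequence over a connected base; everything else is a mechanical unwinding of the notion of invertibility for graded modules through Theorem~\ref{finite:gorcriteria}.
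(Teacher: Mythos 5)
Your proposal is correct and follows essentially the same route as the paper: the paper likewise reads the corollary off Theorem~\ref{finite:gorcriteria}, using Lemma~\ref{graded-invertibles} (whose proof contains exactly your locally-constant-degree argument) to see that over a connected spectrum an invertible graded module is concentrated in one degree, and the definition of grade to identify that degree with $\grade_PS$. The only difference is that you re-derive the relevant part of Lemma~\ref{graded-invertibles} rather than citing it.
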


Set $\env S=S\otimes_KS$ and let $\mu\col \env S\to S$ denote the
multiplication map, $\mu(a\otimes b)=ab$.

\begin{theorem}
\label{diagonal:gor}
Let $K$ be a noetherian ring and let $\sigma\col K\to S$ be a flat,
essentially of finite type homomorphism of rings.

The following conditions are then equivalent.
\begin{enumerate}[\quad\rm(i)]
     \item The homomorphism $\sigma$ is Gorenstein.
     \item[\rm(i$'$)] The homomorphism $S\otimes_K\sigma\col S\to \env S$ is
       Gorenstein at each $\fm\in \Supp_{\env S}(S)$.
     \item The homomorphism $\mu\col\env S\to S$ is quasi-Gorenstein.
     \item The $\env S$-module $S$ has finite G-dimension.
     \item The graded $S$-module $\Ext{}{\env S}S{\env S}$ is invertible.
\end{enumerate}
\end{theorem}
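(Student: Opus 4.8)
The plan is to prove the cyclic chain of implications (i) $\Rightarrow$ (i$'$) $\Rightarrow$ (ii) $\Rightarrow$ (iii) $\Rightarrow$ (iv) $\Rightarrow$ (i), using the characterizations of quasi-Gorenstein homomorphisms recalled in \ref{factorizations} together with Theorems~\ref{foxby} and \ref{finite:gorcriteria}. The implications among (ii), (iii), (iv) are largely a matter of unwinding definitions: by \ref{factorizations} the map $\mu$ being quasi-Gorenstein is equivalent to $\mu$ having finite G-dimension together with $\Rhom{\env S}S{\env S}$ being a shift of $S$, and since $\sigma$ is essentially of finite type and flat, $\mu$ factors as $\env S\to P\to S$ with $\env S\to P$ essentially smooth (base-change of a smooth-by-surjective factorization of $\sigma$ along $S\otimes_K-$) and $P\to S$ finite; so one may apply Theorem~\ref{finite:gorcriteria} to $\mu$ with this factorization. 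This identifies the invertibility of the graded $S$-module $\Ext{}{\env S}S{\env S}\cong\Ext{}{P}S{P}$ (the isomorphism coming from the standard change-of-rings along the essentially smooth map $\env S\to P$, possibly up to a shift by the relative dimension) with $\mu$ being quasi-Gorenstein, yielding (ii) $\Leftrightarrow$ (iv); and finite G-dimension of $S$ over $\env S$, i.e. (iii), is exactly the finiteness condition entering the definition of quasi-Gorenstein, so (ii) $\Rightarrow$ (iii) is immediate and (iii) $\Rightarrow$ (ii) needs the identification of $\Rhom{P}S{P}$ as a shift of $S$, which again comes from Theorem~\ref{finite:gorcriteria} once one knows the graded Ext module is invertible — so it is cleanest to route (iii) $\Rightarrow$ (iv) directly.

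For (iii) $\Rightarrow$ (iv): assuming $\gdim_{\env S}S<\infty$, work locally at $\fm\in\Supp_{\env S}S$. Because $\sigma$ is flat, the local homomorphism $\mu_\fm$ has fibers that are localizations of tensor products of the form $S_{\fn}\otimes_{k(\fp)}k(\fp)$-type rings; the point is that the fiber of $\mu$ is, up to localization, a fiber of $\sigma$, so finite G-dimension of $S$ over $\env S$ forces those fibers to be Gorenstein by the quasi-Gorenstein $\Leftrightarrow$ Gorenstein equivalence for flat maps in \ref{factorizations}. Then $\Rhom{\env S}S{\env S}_\fm$ is a shift of $S_\fm$, and taking cohomology shows $\Ext{n}{\env S}S{\env S}_\fm$ is $S_\fm$ in a single degree $d(\fm)$ and $0$ elsewhere; this is precisely the hypothesis of Corollary~\ref{invertible} (applied over $\env S$ to the finite $\env S$-module $S$, or equivalently reformulated over $S$), giving finiteness and, more importantly, that the graded $S$-module $\Ext{}{\env S}S{\env S}$ is locally free of rank one in each degree with the degrees matching up on overlaps — that is, invertible. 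One must be a little careful that the degrees $d(\fm)$ assemble into a locally constant function $\Supp_{\env S}S\to\BZ$; this follows because $S_\fm$ is a shift of $\Rhom{\env S}S{\env S}_\fm$ and the shift amount varies continuously, being computable as $\grade$ plus a relative dimension.

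For (iv) $\Rightarrow$ (i): invertibility of $\Ext{}{\env S}S{\env S}$ means that locally at every $\fm$ it is $S_\fm$ concentrated in one degree, so the hypothesis of Corollary~\ref{invertible} holds and $\gdim_{\env S}S<\infty$; moreover one gets that $\Rhom{\env S}S{\env S}_\fm\simeq\susp^{d(\fm)}S_\fm$, hence $\mu$ is quasi-Gorenstein. Since $\mu$ sits in a flat factorization $S\xra{S\otimes_K\sigma}\env S\xra{\mu}S$ with composite the identity, and the flat map $S\otimes_K\sigma$ being quasi-Gorenstein at $\fm$ is equivalent to its being Gorenstein there, a transitivity/descent argument along the identity factorization transfers the Gorenstein property of $\mu$'s fibers to those of $S\otimes_K\sigma$, giving (i$'$), and then faithfully flat descent along $\sigma$ (the fibers of $\sigma$ being base changes of those of $S\otimes_K\sigma$) yields (i). The implication (i) $\Rightarrow$ (i$'$) is the standard fact that base change of a Gorenstein homomorphism is Gorenstein, and (i$'$) $\Rightarrow$ (ii) records that a flat quasi-Gorenstein map is the same as a Gorenstein one together with the observation that $\mu$ is obtained from $S\otimes_K\sigma$ by the section $\mu$.

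The main obstacle I expect is the bookkeeping in (iii) $\Rightarrow$ (iv): one must verify that the local degrees $d(\fm)$ in which $\Ext{}{\env S}S{\env S}_\fm$ is nonzero fit together into a well-defined locally constant function on $\Supp_{\env S}S$, so that the graded module is genuinely invertible rather than merely "locally free of rank one in degrees that might jump." Handling this cleanly requires comparing, over the essentially smooth map $\env S\to P$, the complex $\Rhom PS P$ with $\Rhom{\env S}S{\env S}$ up to a shift by the locally constant relative dimension, and then invoking Theorem~\ref{finite:gorcriteria}'s conclusion that invertibility of $\Ext{}PS P$ is equivalent to $\sigma$ (here $\mu$) being quasi-Gorenstein — so that in fact the graded-invertibility is built into that theorem and one does not have to re-prove it by hand.
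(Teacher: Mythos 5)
There is a genuine gap, and it sits exactly at the hardest point of the theorem: getting from (iii) to any of the other conditions. Finite G-dimension of $S$ over $\env S$ is strictly weaker, a priori, than $\mu$ being quasi-Gorenstein: by \ref{gd=reflexive} it gives you that $\Rhom{\env S}S{\env S}$ is homologically bounded and that biduality is an isomorphism, but it does \emph{not} give you that this complex is locally a shift of $S$. Your argument for (iii)$\Rightarrow$(iv) asserts that ``finite G-dimension of $S$ over $\env S$ forces those fibers to be Gorenstein by the quasi-Gorenstein $\Leftrightarrow$ Gorenstein equivalence for flat maps in \ref{factorizations}'' --- but $\mu$ is surjective, not flat, so that equivalence does not apply to it, and the fibers of $\mu$ are residue fields, not fibers of $\sigma$. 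You then conclude that $\Rhom{\env S}S{\env S}_\fm$ is a shift of $S_\fm$, which is precisely the quasi-Gorenstein condition you were supposed to derive; as written the step is circular. The paper's route out of (iii) is the Decomposition Theorem of Avramov--Foxby applied to the factorization $\id^S=\mu\circ\psi$ with $\psi=S\otimes_K\sigma$: since the composite is (trivially) quasi-Gorenstein and $\gdim\mu<\infty$, that theorem forces $\psi$ to be Gorenstein on $\Supp_{\env S}S$, i.e.\ (i$'$), and conversely forces $\mu$ to be quasi-Gorenstein when (i$'$) holds. This external input is what your proposal is missing; the phrase ``a transitivity/descent argument along the identity factorization'' in your (iv)$\Rightarrow$(i) step is gesturing at the same theorem without supplying it, and it is also needed for your (i$'$)$\Rightarrow$(ii).

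Two smaller points. First, for (ii)$\Leftrightarrow$(iv) you route through an essentially smooth-by-finite factorization $\env S\to P\to S$ and the reduction isomorphism $\Ext{}{\env S}S{\env S}\cong\Ext{}PSP$ (up to shift); this can be made to work via \ref{computation:smooth}, but it is unnecessary: the paper simply applies Theorem~\ref{finite:gorcriteria} to the Gorenstein-by-finite factorization $\env S\xra{\id}\env S\xra{\mu}S$, which gives the equivalence with no change of rings at all. Second, your worry about the local degrees $d(\fm)$ assembling into a locally constant function is already discharged by Lemma~\ref{graded-invertibles}, so no separate continuity argument is needed once invertibility of the graded module is in hand.
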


The preceding results are proved at the end of this section, following some
technical preparation.  Condition (iv) in the second one is refined in
Theorem~\ref{gorenstein-factorization}.

Let $L$ be a finite $S$-module.  Recall that $L$ is projective if and only
if for each $\fn\in \Spec S$ the $S_\fn$-module $L_\fn$ is free. For such
a module $L$ the function $\fn \mapsto \rank_{S_\fn}L_\fn$ on $\Spec S$
is upper semi-continuous, and hence constant on each connected component.
When $\rank_{S_\fn}L_\fn=d$ for each $\fp\in\Spec S$ one says that
$L$ has \emph{rank} $d$.   Projective modules of rank $1$ are called
\emph{invertible} modules.

We say that a graded module $(E^n)_{n\in\BZ}$ is projective, respectively,
invertible, if the module $\bigoplus_{n\in\BZ}E^n$ is projective,
respectively, invertible.

\begin{lemma}
\label{graded-invertibles}
Let $S$ be a noetherian ring.  

A graded $S$-module $(E^n)_{n\in\BZ}$ is invertible if and only if
$E^n$ is finite over $S$ for each $n\in\BZ$ and $E^n_\fn\cong S_\fn$
holds for all $\fn\in\Max S\cap\Supp E^n$.  When this is the case one
has $S=\bigoplus_{i=1}^{q}J_i$ with $J_i= \ann_S(\bigoplus_{j\ne i}
E^{n_j})$, where $\{n_1,\dots,n_q\}=\{n\in\BZ\mid E^n\ne0\}$.
  \end{lemma}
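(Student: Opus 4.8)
The plan is to reduce the graded statement to the ungraded characterization of invertible modules and then extract the idempotent decomposition from the support data. First I would treat the forward implication: if $E=\bigoplus_{n\in\BZ}E^n$ is invertible, then $E$ is finite and projective of rank $1$, so each direct summand $E^n$ is a finite $S$-module, and for every $\fn\in\Max S$ the module $E_\fn$ is free of rank $1$ over $S_\fn$. Since localization is exact and commutes with direct sums, $E_\fn=\bigoplus_n E^n_\fn$ is a free $S_\fn$-module of rank $1$; as $S_\fn$ is local, the only way a direct sum can be free of rank $1$ is if exactly one summand is nonzero and that summand is $\cong S_\fn$. Hence for each $\fn$ there is a unique $n=n(\fn)$ with $E^{n(\fn)}_\fn\ne0$, and $E^{n(\fn)}_\fn\cong S_\fn$, while $E^m_\fn=0$ for $m\ne n(\fn)$; in particular $E^n_\fn\cong S_\fn$ for all $\fn\in\Max S\cap\Supp E^n$, giving the stated condition.

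For the converse I would assume $E^n$ is finite for each $n$ and $E^n_\fn\cong S_\fn$ for every $\fn\in\Max S\cap\Supp E^n$. The key point is to show that the sets $\Supp E^n$, for the finitely many $n$ with $E^n\ne0$, are pairwise disjoint and closed, and their union is $\Spec S$. Disjointness: if some maximal ideal $\fn$ lies in $\Supp E^m\cap\Supp E^{m'}$ with $m\ne m'$, then both $E^m_\fn$ and $E^{m'}_\fn$ are nonzero free $S_\fn$-modules, so $E_\fn$ contains a free summand of rank at least $2$; but I also need $E_\fn$ to be free of rank exactly $1$, which is not yet given — so instead I argue directly: localizing the finitely generated module $\bigoplus_n E^n$ at $\fn$ and using that each nonzero $E^n_\fn$ is $\cong S_\fn$, one sees $E_\fn$ is free of rank equal to $\#\{n:\fn\in\Supp E^n\}$; for $E$ to be invertible this count must be $1$ everywhere. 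To get that the count is $1$, first note each $\Supp E^n$ is closed (as $E^n$ is finite), and the union $\bigcup_n\Supp E^n=\Supp E$ must be all of $\Spec S$ if $E$ is to be faithfully... — actually the hypothesis only constrains maximal ideals in $\Supp E^n$, so $\Supp E=\bigcup_n\Supp E^n$ need not a priori be all of $\Spec S$. However, if some $\fn\in\Max S$ lies in no $\Supp E^n$, then $E_\fn=0$, and then $E$ cannot be invertible; conversely we are trying to prove $E$ is invertible, so I should instead split: the lemma asserts an "if and only if", so in the converse direction I am entitled to derive a contradiction only from the stated right-hand condition, which does include that $E^n_\fn\cong S_\fn$ on $\Max S\cap\Supp E^n$ — this forces, via the decomposition below, that the $\Supp E^n$ partition $\Max S$, hence (being closed) partition $\Spec S$.

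Concretely, the cleanest route is: set $V_n=\Supp E^n$ for the finitely many $n=n_1,\dots,n_q$ with $E^n\ne0$. Each $V_n$ is closed. The hypothesis at maximal ideals, combined with the fact that a point of $\Supp E^{n_i}\cap\Supp E^{n_j}$ would make $E_\fn$ non-cyclic, shows the $V_{n_i}$ are pairwise disjoint; and every maximal ideal must lie in some $V_{n_i}$ (else $E_\fn=0$, contradicting $E^{n_i}_\fn\cong S_\fn$ forcing $\fn\in V_{n_i}$ for that $i$ — wait, this needs that $\Max S=\bigcup V_{n_i}$, which follows because $\bigcap_i \ann_S(E^{n_i})$ localizes to $0$ at every maximal ideal in some $V_{n_i}$ and... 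I will instead observe $\bigcap_{i} (S\setminus\text{something})$...). The robust argument: $\Spec S=\bigsqcup_i V_{n_i}$ follows from disjointness once we know the $V_{n_i}$ cover $\Spec S$, and they cover because the annihilator $\fa=\prod_i\ann_S E^{n_i}$ has $\VV(\fa)\supseteq\Max S$ would give... — let me just say: since the $V_{n_i}$ are disjoint closed sets, $V_{n_i}=\VV(J_i')$ for comaximal ideals, and setting $J_i=\ann_S(\bigoplus_{j\ne i}E^{n_j})$ one checks $\VV(J_i)=V_{n_i}$ using that $\bigoplus_{j\ne i}E^{n_j}$ localizes to $0$ exactly off $\bigcup_{j\ne i}V_{n_j}=\Spec S\setminus V_{n_i}$; comaximality of the $J_i$ then yields $S\cong\bigoplus_i S/J_i$ via CRT, i.e. the stated $S=\bigoplus_i J_i$ with the $J_i$ idempotent ideals. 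On the component $\Spec(S/J_i)=V_{n_i}$ the module $E$ agrees with $E^{n_i}$, which is locally free of rank $1$ at every maximal ideal of that component by hypothesis, hence projective of rank $1$ there; assembling over the finitely many components shows $E$ is projective of rank $1$, i.e. invertible.

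The main obstacle, and the step I would spend the most care on, is establishing that the supports $\Supp E^n$ are pairwise disjoint and jointly cover $\Spec S$ purely from the maximal-ideal hypothesis — in particular ruling out a maximal ideal where two graded pieces overlap, and ruling out one where all pieces vanish; once the disjoint-cover (equivalently, the idempotent decomposition of $S$) is in hand, both the module-theoretic conclusion and the displayed formula for the $J_i$ are formal consequences via the Chinese Remainder Theorem and localization.
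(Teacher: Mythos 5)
Your forward direction is correct, but the converse is not a proof: at exactly the two points you yourself identify as the crux --- that the sets $\Supp E^n$ are pairwise disjoint and that they cover $\Spec S$ --- the argument trails off without being completed, and in addition the finiteness of the set $\{n\in\BZ\mid E^n\ne0\}$ is assumed throughout (``the finitely many $n$ with $E^n\ne0$'') rather than established. That last omission is not cosmetic: the paper's own proof states that ``only the finiteness of $E=\bigoplus_n E^n$ is at issue,'' i.e.\ this \emph{is} the substantive content of the converse. It is obtained there by showing that the assignment $\fq\mapsto n(\fq)$ (the unique degree surviving localization at $\fq$) is constant on connected components of $\Spec S$, and a noetherian spectrum has only finitely many components.

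Concerning disjointness and covering, your suspicion that they cannot be extracted from the literal maximal-ideal hypothesis is well founded: taking $E^0=E^1=S$ and $E^n=0$ otherwise satisfies the stated condition verbatim yet gives a module of rank two, so no amount of care at that step would have closed the gap. The hypothesis has to be used in the form the paper actually invokes, namely $E_\fn\cong S_\fn$ for the \emph{total} module $E$ at each maximal ideal $\fn$ (equivalently: exactly one graded piece survives at $\fn$, and it is free of rank one). From this, for any prime $\fq$ contained in a maximal ideal $\fn$ one gets $E_\fq\cong(E_\fn)_{\fq S_\fn}\cong S_\fq$, and the indecomposability of $S_\fq$ as a module over itself forces a unique $n(\fq)$ with $E_\fq=E^{n(\fq)}_\fq$. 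This one observation delivers disjointness, covering, and (via the local constancy noted above) finiteness of the number of nonzero pieces all at once; only after that do the invertibility of the finite module $E$ and the idempotent decomposition $S=\bigoplus_i J_i$ follow along the lines you sketch.
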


\begin{proof}
The `only if' part is clear.  For the converse, only the finiteness of the $S$-module
$E=\bigoplus_{n\in\BZ}E^n$ is at issue.  Let $\fq$ be a prime ideal of $S$ and 
$\fn$ a maximal ideal containing $\fq$.  Since $S_\fq$ is indecomposable as a 
module over itself, the isomorphisms
\[
\bigoplus_{n\in\BZ}E^n_\fq\cong E_\fq\cong (E_\fn){}_{\fq S_\fn}\cong S_\fq
\]
provide a unique integer $n(\fq)$ with the property
$E_\fq=E^{n(\fn)}_\fq$.  They also imply an equality $n(\fq)=n(\fn)$, so the
function $\fq\mapsto n(\fq)$ is constant on each connected component
of $\Spec S$.  One has $E^n=0$ unless $n=n(\fq)$ holds for some $\fq$,
so $E=\bigoplus_{n\in\BZ}E^n$ has only finitely many non-zero summands.

The finite decomposition of $E$ produces a disjoint union 
\[
\Spec S=\bigsqcup_{i=1}^q\Supp_S E^{n_i}\,.
\]
It yields $S=\bigoplus_{i=1}^qJ_i$ because one has 
$\Supp_S E^{n_i}=\{\fq\in\Spec S\mid\fq\supseteq J_i\}$.
  \end{proof}

 \begin{remark}
  \label{projective_homology}
If $X$ is a complex of $S$-modules with $\HH{}X$ graded projective,
then in $\dcat[S]$ there is an isomorphism $X\simeq\HH{}X$.

Indeed, using the projectivity of $\HH{}X$, choose a section of
the canonical homomorphism of graded $S$-modules $\ZZ(X)\to\HH{}X$.
Composing this section with the inclusion $\ZZ(X)\to X$ one gets a
quasi-isomorphism of complexes $\HH{}X\to X$.
  \end{remark}

\begin{proof}[Proof of Theorem \emph{\ref{finite:gorcriteria}}] Since $P$
is a finite $P$-module, $\Ext{n}{P}S{P}$ is finite over $S$ for each
$i\in\BZ$ and for every $\fn\in\Max S$ one has isomorphisms of graded
$S$-module
  \[
      \HH{}{\Rhom {P_{\fn\cap P}}{S_\fn}{P_{\fn\cap P}}}
      \cong\Ext{}{P_{\fn\cap P}}{S_\fn}{P_{\fn\cap P}}
      \cong\Ext{}{P}S{P}_\fn
  \] 
Thus, if $\sigma$ is quasi-Gorenstein, then $\Ext{}{P}S{P}$ is
invertible by Lemma \ref{graded-invertibles}.

When $\Ext{}{P}S{P}$ is invertible one has $\Rhom PSP\simeq\Ext{}{P}S{P}$
in $\dcat[S]$, see Remark \ref{projective_homology}.  For every
$\fn\in\Spec S$ this yields an integer $d(\fn)$ and an isomorphism $\Rhom
{P_{\fn\cap P}}{S_\fn}{P_{\fn\cap P}}\simeq\Shift^{d(\fn)}S_\fn$
in $\dcat[S_\fn]$.  Corollary~\ref{invertible} implies that
$\gdim_{P_{\fn\cap P}}S_\fn$ is finite, so $P\to S$ is quasi-Gorenstein
at $\fn$; see \ref{factorizations}. The same corollary also yields
$\gdim_{P}S\leq \height \ann_PS$.  As $K\to P$ is Gorenstein, it is also
quasi-Gorenstein, hence so is $\sigma$; see \cite[(8.9)]{AF:qG}.
\end{proof}

\begin{proof}[Proof of Corollary \emph{\ref{cor:gor}}]
One has $\Ext gPSP\ne 0$ for $g=\grade_{P}S$, by definition. Thus, when $\Spec S$ is connected, the graded module $S$-module $\Ext{}{P}S{P}$ is invertible if and only if $\Ext{n}{P}S{P}$ is invertible for $n=g$ and zero otherwise; see Lemma \ref{graded-invertibles}. The desired result now follows from Theorem~\ref{finite:gorcriteria}.
 \end{proof}

For $\fn\in\Spec S$, we write $k(\fn)$ for the field of fractions
of $S/\fn$.

\begin{proof}[Proof of Theorem \emph{\ref{diagonal:gor}}]
Set $Q=\env S$ and $\psi=S\otimes_K\sigma\col S\to Q$.  This map is
flat along with $\sigma$, and hence so is $\psi_{\fm}$ for each
$\fm\in\Spec Q$; in particular, $\gdim\psi_{\fm}$ is finite.

(i)$\implies$(i$'$).  
Setting $\fn=\fm\cap S$, note the isomorphism of rings
  \[
        k(\fn)\otimes_SQ\cong k(\fn)\otimes_{k(\fn\cap K)}(k(\fn\cap
        K)\otimes_KS)\,.
  \] 
It shows that if the ring $k(\fn\cap K)\otimes_KS$ is Gorenstein, then so is 
$k(\fn)\otimes_SQ$, hence also $(k(\fn)\otimes_SQ)_{\fn}$,
which is isomorphic to $k(\fn)\otimes_{S_\fn}Q_\fm$.

(i$'$)$\implies$(i).  Every prime ideal $\fn\in\Spec S$ is the contraction of the
prime ideal $\fm=\mu^{-1}(\fn)$ of $\env S$, which contains $\Ker(\mu)$.
Thus, when (i$'$) holds the local homomorphism $\psi_{\fm}\col S_\fn\to Q_\fm$ 
is flat with Gorenstein closed fiber.  The \emph{proof} of \cite[(6.6)]{AF:Gor}
shows that then so is the local homomorphism $\sigma_\fn\col K_{\fn\cap
K}\to S_\fn$.

(iv)$\iff$(ii)$\implies$(iii).  Apply Theorem~\ref{finite:gorcriteria}
to $K=P=\env S$ and $\varkappa=\id^{\env S}$

(iii)$\implies$(i$'$)$\implies$(ii).  These assertions follow from the
Decomposition Theorem \cite[(8.10)]{AF:Gor}, applied to the evidently
quasi-Gorenstein composition $\mu\psi=\id^S$.
 \end{proof}

\section{Bigrade of a homomorphism}
\label{nBigrade of a homomorphism}

Invariants provided by Hochschild cohomology reflect the structure
of an algebra $\sigma\col K\to S$ as a \emph{bimodule} over itself.
We define the \emph{bigrade} of $\sigma$ by the formula
\[
    \bigrade{\sigma}{}=\inf\{n\in\BZ\mid\Ext n{\env S}S{\env S}\ne0\}\,.
\]

Applications of this invariant are given in the following sections.
Here we examine its formal properties and compare it to other invariants
of the $K$-algebra $S$.  For each $\fn\in\Spec S$ we define the
 \emph{residual transcendence degree} of $\sigma$ at $\fn$ as the
 number
\[
    \rtd{\sigma}{\fn}=\trdeg_{k(\fn\cap K)}k (\fn)\,.
\]

When $\varkappa\col K\to P$ is an essentially smooth homomorphism of
commutative rings the $P$-module of K\"ahler differentials $\Omega_{P|K}$
is finite and projective; see \cite[\S16.10]{Gr}.  Following \cite{YZ}, we say 
that $\varkappa$ has \textit{relative dimension} $d$ if this projective
module has rank $d$; see Section \ref{Quasi-Gorenstein homomorphisms}.
An example is given by the canonical map $K\to U^{-1}K[x_1,\dots,x_d]$,
where $x_1,\dots,x_d$ are indeterminates and $U$ is any multiplicatively
closed set.  Thus, every homomorphism (essentially) of finite type has
an (essentially) smooth{-}by{-}surjective factorization of finite relative
dimension.

  \begin{theorem}
\label{thm:bigrade}
Let $K$ be a noetherian ring, $\sigma\col K\to S$ a flat homomorphism,
and $K\xra{\varkappa} P\to S$ an essentially smooth{-}by{-}finite
factorization of relative dimension $d$.
  \begin{enumerate}[\rm(1)]
 \item
For every minimal prime ideal $\fq$ of $S$ there are (in)equalities
  \[
0\le d-\pd_PS\le\bigrade\sigma\le\bigrade[\fq]\sigma=\rtd{\sigma}{\fq}\le
d \,.
  \]
    \item
The following conditions are equivalent
    \begin{enumerate}[\quad\rm(i)]
  \item
$\bigrade\sigma=d-\pd_PS$.
  \item
$\Ass S\cap\Supp_S\Ext{p}PSP\ne\varnothing$ for $p=\pd_PS$.
    \end{enumerate} \item
When $S$ and $P$ are integral domains one has
  \[
\trdeg_{\sigma}(0)=d-\grade_PS\,.
  \]
    \end{enumerate}
  \end{theorem}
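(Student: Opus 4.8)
The plan is to establish Theorem~\ref{thm:bigrade} by relating the invariants $\bigrade[\fq]\sigma$ to cohomology computed over the ring $P$, then specializing. I will assume the formula alluded to in the introduction (the one from \cite{AIL}, ``reproduced in \ref{computation:smooth}'') that expresses $\Ext n{\env S}S{\env S}$ localized appropriately in terms of $\Ext{\bullet}PSP$ together with $\bwedge\Omega_{P|K}$; concretely, after reduction to the diagonal one expects an isomorphism of graded $S$-modules of the form $\Ext{}{\env S}S{\env S}_\fq\cong\Ext{}P{S}{P}_\fq\otimes_{S_\fq}\bwedge^{d}(\Omega_{P|K}\otimes_PS)_\fq$, up to a degree shift by $d$. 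Granting this, part~(1) becomes a statement purely about $\Ext{}PSP$ over the local ring $P_{\fq\cap P}$ and about the residual transcendence degree.

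First I would prove part~(1). The leftmost inequality $0\le d-\pd_PS$ holds because $S$ is a finite $P$-module with $\pd_PS\le\dim P$ and, more precisely, because localizing at a minimal prime one has $\pd_{P_\fp}S_\fp\le\height(\ann_PS)\le d$ — here $\varkappa$ essentially smooth of relative dimension $d$ forces $\dim S_\fq+(\text{fiber dimension})$ bookkeeping to bound the codimension of $S$ in $P$ by $d$ when we also remember $\trdeg_\sigma(\fq)\le d$. The inequality $\bigrade\sigma\le\bigrade[\fq]\sigma$ is immediate from the definition of $\bigrade$ as an infimum together with the fact that localization at $\fq$ is exact, so a nonzero Ext module stays detectable; more precisely $\bigrade\sigma=\inf_{\fn}\bigrade[\fn]\sigma$ and minimal primes are among the $\fn$. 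The inequality $d-\pd_PS\le\bigrade\sigma$: via the reduction-to-the-diagonal formula, the lowest degree in which $\Ext{}{\env S}S{\env S}$ is nonzero is $d-(\text{top degree of }\Ext{}PSP\text{ that survives})$, and the top nonvanishing degree of $\Ext{}PSP$ is exactly $\pd_PS$ (since $P$ is Gorenstein of finite Krull dimension after localizing, $\gdim=\pd$ and $\sup\{n\mid\Ext nPSP\ne0\}=\pd_PS$). For the key equality $\bigrade[\fq]\sigma=\rtd\sigma\fq$: localize everything at the minimal prime $\fq$, so $S_\fq$ is Artinian (being a localization of $S$ at a minimal prime, it has dimension $0$), whence over $P_{\fq\cap P}$ the module $S_\fq$ has $\pd_{P_\fq}S_\fq=\grade_{P_\fq}S_\fq=\height\ann_{P_\fq}S_\fq$, and the diagonal formula collapses to a single degree $d-\pd_{P_\fq}S_\fq$. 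Finally $\height\ann_{P_\fq}S_\fq=d-\trdeg_{k(\fq\cap K)}k(\fq)$ by the dimension formula for the essentially smooth (hence catenary, equidimensional-fibered) map $\varkappa$, combined with $\dim S_\fq=0$; this gives $d-\pd_{P_\fq}S_\fq=\trdeg_{k(\fq\cap K)}k(\fq)=\rtd\sigma\fq$, and $\le d$ is then clear.

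Next, part~(2): I would simply unwind when $\bigrade\sigma$ attains its lower bound $d-\pd_PS$. By the diagonal formula, $\Ext{d-p}{\env S}S{\env S}_\fq\ne0$ for some prime $\fq$ exactly when $\Ext pPSP_\fq\ne0$ with $p=\pd_PS$, i.e.\ $\fq\in\Supp_S\Ext pPSP$; and for this to force $\bigrade\sigma=d-p$ rather than something larger one needs such a $\fq$ to be available at a minimal prime, i.e.\ in $\Ass S$ — since $\bigrade\sigma=d-p$ iff there is a prime (which may as well be taken minimal over the support of an Ext, hence associated) detecting degree $d-p$. So (i)$\iff$(ii) is a direct translation: $\bigrade\sigma=d-p$ iff $\Ext pPSP$ does not vanish after localizing at some associated prime of $S$, which is precisely $\Ass S\cap\Supp_S\Ext pPSP\ne\varnothing$. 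For part~(3), when $S$ and $P$ are domains, $(0)$ is the unique minimal prime, so part~(1) gives $\bigrade[(0)]\sigma=\trdeg_\sigma(0)$; but over a domain $P$, for the module $S$ one has $\grade_PS=\height\ann_PS$ (a domain is equidimensional and the quotient $S$ is a domain, so $\ann_PS$ is prime and grade equals height), and the dimension formula gives $\height\ann_PS=d-\trdeg_{\operatorname{Frac}K}\operatorname{Frac}S=d-\trdeg_\sigma(0)$; equivalently $\trdeg_\sigma(0)=d-\grade_PS$, which is the claim. The main obstacle I anticipate is justifying the reduction-to-the-diagonal isomorphism carefully enough at the level of individual degrees and verifying that the shift is exactly $d$ and that no extra torsion or differential obstructs the identification of the socle/top degree; everything else is dimension-theoretic bookkeeping with the catenary, finite-type hypotheses, which is routine once that formula is in hand. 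I would therefore structure the write-up so that the diagonal formula (\ref{computation:smooth}) is invoked as a black box and all the work is in the dimension and depth inequalities above.
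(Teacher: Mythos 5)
Your overall architecture (reduce everything to $\Ext{}PSP$ via the formula in \ref{computation:smooth}, then do dimension bookkeeping) matches the paper's, and the treatment of the inequalities $d-\pd_PS\le\bigrade\sigma\le\bigrade[\fq]\sigma$ and of part (2) is essentially the paper's Lemmas \ref{lem:bigrade1} and \ref{lem:bigrade2} (though for (2) you should invoke the formula $\Ass_S\Hom SCS=\Ass S\cap\Supp_SC$ explicitly rather than the vague ``may as well be taken minimal over the support of an Ext''; and the bound $\pd_PS\le d$ really comes from flatness of $\sigma$ via \ref{fpd:smooth}, not from height considerations). The genuine gap is in your proof of the key equality $\bigrade[\fq]\sigma=\rtd{\sigma}{\fq}$. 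You ``localize everything at $\fq$'' and treat $K_{\fq\cap K}\to P_{\fq\cap P}\to S_\fq$ as a smooth-by-finite factorization of $\sigma_\fq$ to which the diagonal formula applies. But $P_{\fq\cap P}\to S_\fq$ is in general \emph{not} finite: setting $\fm=\fq\cap P$, the ring $S_\fm=S\otimes_PP_\fm$ is finite over $P_\fm$ but only semilocal, and $S_\fq$ is a further localization at one of its maximal ideals, which typically destroys module-finiteness. Consequently the localized reduction formula does not ``collapse to a single degree $d-\pd_{P_{\fq\cap P}}S_\fq$'': what one actually gets from \ref{tools} and \ref{computation:smooth} is $\Ext{n}{\env S}S{\env S}_\fq\cong\Ext{n-d}{S_\fq}{D\otimes_SS_\fq}{S_\fq}$ with $\hh{D\otimes_SS_\fq}=\Ext{}{P_\fm}{S_\fm}{V_\fm}\otimes_{S_\fm}S_\fq$, and the top nonvanishing degree of this is governed by which $\Ext nPSP$ have $\fq$ in their support, not by a projective dimension of $S_\fq$ over $P_\fm$.

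The paper's Lemma \ref{lem:bigrade3} circumvents exactly this obstacle by building a \emph{new} factorization of $\sigma_\fq$: since $\fq$ is minimal and $\sigma$ is flat, both $S_\fq$ and $K_\fp$ (with $\fp=\fq\cap K$) are artinian; lifting a transcendence basis of $k(\fq)$ over $k(\fp)$ gives a local essentially smooth $K_\fp\to Q$ of relative dimension $t=\rtd{\sigma}{\fq}$ with $Q$ artinian and $Q\to S_\fq$ finite (verified by a length count). Over the artinian ring $Q$ the finite-projective-dimension module $S_\fq$ is free, so Lemma \ref{lem:bigrade1} gives $\bigrade[\fq]\sigma=t-0=t$ at once. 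You would need this construction (or a substitute for it). Note also that even your identification $d-\pd_{P_{\fq\cap P}}S_\fq=\rtd\sigma\fq$ silently uses that $K_\fp$ is artinian (hence Cohen--Macaulay); without that, $\pd_{P_\fm}S_\fq=\depth P_\fm$ differs from $\height\ann_{P_\fm}S_\fq=\dim P_\fm$, and your chain $\pd=\grade=\height\ann$ breaks, since $P$ is only assumed essentially smooth over a noetherian (not Cohen--Macaulay) $K$. The same caveat applies to part (3), where ``grade equals height because $\ann_PS$ is prime'' is false in a non-Cohen--Macaulay $P$; the paper instead uses \cite[(2.5)]{AF:cm}, which exploits the finiteness of $\pd_{P_{\fn\cap P}}S_\fn$ guaranteed by \ref{fpd:smooth}.
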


The hypotheses and notation of the theorem stay in force for the
rest of this section.  Its proof is presented following that of Lemma
\ref{lem:bigrade3}.

The first inequality in (1) is a consequence of a general result that
tracks homological dimensions along smooth homomorphisms:

\begin{chunk}
\label{fpd:smooth}
For every complex $M$ of $P$-module with $\hh M$ finite one has inequalities
\[
      \fd_KM\le\pd_PM\le\fd_KM+ d\,.
\]
See \cite[1.4]{AIL} for a proof.  In particular, $\pd_PM$ and $\fd_KM$ are
finite simultaneously.  \end{chunk}

The next result, proved in \cite[4.2]{AIL}, is a critical ingredient in
the arguments below.

\begin{chunk}
\label{computation:smooth}
For every $n\in\BZ$ one has an isomorphism of $S$-modules
\[
    \Ext n{\env S}S{\env S}\cong 
    \Ext {n-d}S{\Rhom PS{{\ts\bwedge}^{d}_P\Omega_{P\var K}}}S\,.
\]
\end{chunk}

The next remark is useful in applications of the reduction formula above.

\begin{chunk}
\label{supports}
The following subsets of $\Spec S$ are equal:
  \begin{equation}
  \label{eq:supports2}
      \Supp_S\Ext{n}PS{\bwedge^d_P\Omega_{P|k}}=\Supp_S\Ext nPSP\,.
  \end{equation}

Indeed, set $V=\bwedge^d_P\Omega_{P|k}$.  Since the $P$-module $V$
is invertible, for each $n\in\BZ$ and every $\fn\in\Spec S$ there are
isomorphisms of $S_\fn$-modules
  \begin{equation}
  \label{eq:supports1}
\begin{aligned}
        \Ext nPS{\bwedge^d_P\Omega_{P|k}}_\fn
        &\cong \Ext nPS{\bwedge^d_P\Omega_{P|k}}\otimes_S S_\fn
        \\
        &\cong \bwedge^d_P\Omega_{P|k}\otimes_P\Ext nPSP\otimes_S S_\fn
        \\
        &\cong \bwedge^d_P\Omega_{P|k}\otimes_P(\Ext nPSP_\fn)
        \\
        &\cong(\bwedge^d_P{\Omega_{P|k}})_{\fn\cap P}\otimes_{P_{\fn\cap P}}\Ext nPSP_\fn
        \\
        &\cong \Ext nPSP_\fn
\end{aligned}
  \end{equation}
\end{chunk}

\begin{lemma}
\label{lem:bigrade1}
There is an inequality $\bigrade\sigma\ge d-\pd_PS$.

Equality holds if and only if $\Ass S\cap\Supp_S\Ext{p}PSP\ne \varnothing$.
\end{lemma}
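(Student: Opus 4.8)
The plan is to transport both sides of the (in)equality to cohomology over the ring $S$ by means of the reduction formula \ref{computation:smooth}, turning the assertion into a statement about the bottom homology module of a bounded complex. Set $V=\bwedge^d_P\Omega_{P\var K}$, an invertible $P$-module, and put $W=\Rhom PSV$; since $S$ is a finite $P$-module, $W$ is a complex of $S$-modules with finitely generated homology. By \ref{computation:smooth} there are isomorphisms of $S$-modules $\Ext n{\env S}S{\env S}\cong\Ext{n-d}SWS$ for every $n\in\BZ$, so
\[
\bigrade\sigma=d+\inf\{m\in\BZ\mid\Ext mSWS\ne0\}\,.
\]
Because $\sigma$ is flat one has $\fd_KS=0$, hence $p=\pd_PS$ is finite and at most $d$ by \ref{fpd:smooth}. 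Since $V$ is $P$-flat, $\HH iW=\Ext{-i}PSV\cong\Ext{-i}PSP\otimes_PV$, which vanishes for $i\notin[-p,0]$; thus $W$ is, up to isomorphism in $\dcat[S]$, a complex concentrated in homological degrees $[-p,0]$ with $\HH{-p}W\cong\Ext pPSV$.

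For the inequality, choose a semiprojective resolution $G\xra{\simeq}W$ over $S$ with $G_i=0$ for $i<-p$, which is possible since $W$ may be represented by a complex living in homological degrees $\ge-p$. Then $\Rhom SWS\simeq\Hom SGS$ is concentrated in homological degrees $\le p$, so $\Ext mSWS=\HH{-m}{\Rhom SWS}=0$ for all $m<-p$. Therefore $\bigrade\sigma\ge d-p$, with equality precisely when $\Ext{-p}SWS\ne0$.

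To handle equality I would first identify this bottom Ext module. With the resolution $G$, since $G_{-p-1}=0$ the module $\Ext{-p}SWS=\HH p{\Hom SGS}$ is the kernel of the $S$-dual of the map $G_{-p+1}\to G_{-p}$, whereas $\HH{-p}W$ is the cokernel of that same map; left-exactness of $\Hom S-S$ then gives $\Ext{-p}SWS\cong\Hom S{\HH{-p}W}S\cong\Hom S{\Ext pPSV}S$. The module $L=\Ext pPSV$ is finite over the noetherian ring $S$, and for any such module $\Hom SLS\ne0$ if and only if $\Ass S\cap\Supp_SL\ne\varnothing$: a nonzero homomorphism $L\to S$ has nonzero image, whose associated primes lie in $\Ass S\cap\Supp_SL$, and conversely, for a prime $\fp$ in this intersection the composite $L_\fp\twoheadrightarrow k(\fp)\hookrightarrow S_\fp$ is a nonzero map. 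Finally, \ref{supports} identifies $\Supp_S\Ext pPSV$ with $\Supp_S\Ext pPSP$. Combining these equivalences, $\bigrade\sigma=d-p$ if and only if $\Ass S\cap\Supp_S\Ext pPSP\ne\varnothing$, as asserted.

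I expect the main obstacle to be keeping the homological grading straight: one must follow the shift in the reduction formula \ref{computation:smooth} and confirm that $W$ is genuinely represented by a complex in degrees $[-p,0]$, so that $-p$ is its lowest possible degree and the identification $\Ext{-p}SWS\cong\Hom S{\HH{-p}W}S$ is legitimate. Apart from that, the proof is a direct assembly of the reduction formula \ref{computation:smooth}, the bound \ref{fpd:smooth}, the support identity \ref{supports}, and the elementary criterion for non-vanishing of $\Hom SLS$.
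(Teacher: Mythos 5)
Your proof is correct and follows essentially the same route as the paper: reduce via \ref{computation:smooth} to the complex $\Rhom PS{\bwedge^d_P\Omega_{P|K}}$, observe its homology is concentrated in degrees $[-p,0]$, identify the bottom Ext with $\Hom S{\Ext pPSV}S$, and finish with the support identity \ref{supports} and the criterion $\Ass_S\Hom SLS=\Ass S\cap\Supp_SL$. The only difference is that you spell out the truncated-resolution argument and prove the non-vanishing criterion for $\Hom SLS$ that the paper cites as a standard formula.
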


\begin{proof}
Set $D=\Rhom PS{\bwedge^d_P\Omega_{P|k}}$ and $C=\HH{-p}D$, where $p=\pd_PS$.

{}From~\eqref{eq:supports2} one gets $\HH nD=0$ for $n<-p$.  This implies the
second isomorphism of $S$-modules below, while \ref{computation:smooth}
gives the first one:
 \[
  \Ext{n+d}{\env S}{S}{\env S}\cong \Ext{n}SDS\cong
    \begin{cases}
      0&\text{for }n<-p\,;
    \\
      \Hom SCS\quad&\text{for }n=-p\,.
    \end{cases}
 \]
These isomorphisms yield $\bigrade\sigma\ge d-p$ and show that equality
is equivalent to $\Hom SCS\ne0$.  Referring to a standard formula and
to~\eqref{eq:supports2} we obtain
 \[
  \Ass_S\Hom SCS=\Ass S\cap\Supp_SC=\Ass S\cap\Supp_S\Ext{p}PSP\,.
 \]
Thus, $\Hom SCS\ne0$ is equivalent to $\Ass
S\cap\Supp_S\Ext{p}PSP\ne\varnothing$.  \end{proof}

Before continuing, we recall another canonical isomorphism.

\begin{chunk}
\label{tools}
Fix a prime ideal $\fn$ of $S$ and set
$\env{(S_\fn)}=S_\fn\otimes_{K_{\fn\cap K}}S_\fn$.

For each $n\in\BZ$ there is an isomorphism of $S_\fn$-modules
\[
  \Ext n{\env{(S_\fn)}}{S_\fn}{\env{(S_\fn)}}\cong\Ext n{\env S}{S}{\env S}_ \fn\,.
\]

Indeed, let $\lambda\col S\to S_\fn$ and $\kappa\col K\to
K_{\fn\cap K}$ denote the localization maps.  The homomorphism
of rings $\lambda\otimes_\kappa\lambda\col\env S\to\env{(S_\fn)}$ is flat, and
one has an isomorphism $S_\fn\cong\env{(S_\fn)}\otimes_{\env S}S$ of
$\env{(S_\fn)}$-modules, whence the first isomorphism below:
\begin{align*}
    \Ext n{\env{(S_\fn)}}{S_\fn}{\env{(S_\fn)}}
    &\cong\Ext n{\env S}{S}{\env S}\otimes_{\env S}{\env{(S_\fn)}}\\
    &\cong\Ext n{\env S}{S}{\env S}\otimes_SS_\fn\,.
\end{align*}
For the second one note that $\env S$ acts on $\Ext n{\env S}{S}{\env S}$
through $S$.  \end{chunk}

\begin{lemma}
\label{lem:bigrade2}
The following equality holds:
\begin{equation*}
    \bigrade{\sigma}{}=\inf\{\bigrade[\fn]{\sigma}\mid \fn\in\Spec S\}\,.
  \end{equation*}
\end{lemma}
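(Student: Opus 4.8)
The plan is to rewrite both sides of the asserted equality as infima of sets of integers attached to the graded $S$-module $\bigl(\Ext n{\env S}S{\env S}\bigr)_{n\in\BZ}$, and then to invoke the local-to-global principle for vanishing of $S$-modules.

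First I would appeal to the isomorphism recorded in \ref{tools}: for every $\fn\in\Spec S$ and every $n\in\BZ$ there is an isomorphism of $S_\fn$-modules
\[
\Ext n{\env{(S_\fn)}}{S_\fn}{\env{(S_\fn)}}\cong\Ext n{\env S}S{\env S}_\fn\,,
\]
where $\env{(S_\fn)}=S_\fn\otimes_{K_{\fn\cap K}}S_\fn$.  Since $\sigma_\fn\col K_{\fn\cap K}\to S_\fn$ is again essentially of finite type, its bigrade is computed by the defining formula, so the displayed isomorphism gives
\[
\bigrade[\fn]\sigma=\inf\{n\in\BZ\mid \Ext n{\env S}S{\env S}_\fn\ne0\}\,.
\]

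Next I would use that an $S$-module $N$ is zero precisely when $N_\fn=0$ for all $\fn\in\Spec S$; applied with $N=\Ext n{\env S}S{\env S}$ for each fixed $n$, this gives
\[
\{n\in\BZ\mid \Ext n{\env S}S{\env S}\ne0\}=\bigcup_{\fn\in\Spec S}\{n\in\BZ\mid \Ext n{\env S}S{\env S}_\fn\ne0\}\,.
\]
Taking infima of both sides, and using that the infimum of a union equals the infimum of the infima (with the convention $\inf\varnothing=+\infty$, which also handles the degenerate case where every $\Ext n{\env S}S{\env S}$ vanishes), one obtains
\[
\bigrade\sigma=\inf_{\fn\in\Spec S}\bigrade[\fn]\sigma\,,
\]
which is exactly the assertion.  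I do not expect a genuine obstacle here: the only substantive input is the localization formula of \ref{tools}, and the remainder is the interchange of two infima together with the standard fact that an $S$-module vanishes if and only if all of its localizations at prime ideals do.
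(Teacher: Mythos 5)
Your proposal is correct and follows essentially the same route as the paper: both rest on the localization isomorphism of \ref{tools} together with the fact that a module vanishes if and only if all its localizations do (the paper phrases this as the inequality $\bigrade[\fn]\sigma\ge\bigrade\sigma$ for all $\fn$, with equality for any $\fn$ in the support of $\Ext{\bigrade\sigma}{\env S}S{\env S}$). No gap.
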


\begin{proof}
  Set $g=\bigrade{\sigma}$.  {}From the isomorphisms in~\ref{tools} one reads off an
  inequality $\bigrade[\fn]{\sigma}\ge g$, which becomes an equality when $\fn$ is in
  $\Supp_S \Ext g{\env S}{S}{\env S}$.
\end{proof}

\begin{lemma}
 \label{smooth:reldim}
For each prime ideal $\fm$ in $P$ one has $\trdeg_{\varkappa}k(\fm)\leq d$.

Equality holds when $P$ is a domain and $\fm=(0)$.
\end{lemma}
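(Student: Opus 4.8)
The plan is to estimate $\trdeg_\varkappa k(\fm)=\trdeg_{k(\fm\cap K)}k(\fm)$ by comparing modules of Kähler differentials along the ring maps $K\xra{\varkappa}P\to k(\fm)$. Put $\fp=\fm\cap K$. Since $\varkappa$ carries $K\setminus\fp$ into $P\setminus\fm$, it carries $K\setminus\fp$ into the units of the field $k(\fm)$; hence the structure map $K\to k(\fm)$ factors through $k(\fp)$, so that $\Omega_{k(\fm)|K}=\Omega_{k(\fm)|k(\fp)}$. Also, $P\to k(\fm)$ is the composite of the surjection $P\to P/\fm$ and the localization $P/\fm\to k(\fm)$, whence $\Omega_{k(\fm)|P}=0$. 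Plugging this into the second fundamental exact sequence for $K\to P\to k(\fm)$,
\[
\Omega_{P|K}\otimes_P k(\fm)\lra\Omega_{k(\fm)|K}\lra\Omega_{k(\fm)|P}\lra 0\,,
\]
shows that the first arrow is surjective. As $\Omega_{P|K}$ is projective of rank $d$, its localization at $\fm$ is free of rank $d$, so $\Omega_{P|K}\otimes_P k(\fm)$ is a $k(\fm)$-vector space of dimension $d$; therefore $\dim_{k(\fm)}\Omega_{k(\fm)|k(\fp)}\le d$.

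The inequality then follows from the classical lower bound $\dim_L\Omega_{L|\ell}\ge\trdeg_\ell L$, valid for every finitely generated field extension $\ell\subseteq L$: applied with $\ell=k(\fp)$ and $L=k(\fm)$ it gives $\trdeg_\varkappa k(\fm)\le\dim_{k(\fm)}\Omega_{k(\fm)|k(\fp)}\le d$. (Only the projectivity of $\Omega_{P|K}$ of rank $d$ is used for this direction; geometric regularity of the fibers is not needed.)

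For the equality statement, assume $P$ is a domain and $\fm=(0)$, so that $k(\fm)$ is the field of fractions of $P$ and $\fp=\Ker\varkappa$. Now $P\to k(\fm)$ is a localization, so the base-change map $\Omega_{P|K}\otimes_P k(\fm)\to\Omega_{k(\fm)|K}$ is an \emph{isomorphism}, and hence $\dim_{k(\fm)}\Omega_{k(\fm)|k(\fp)}=d$. On the other hand, the generic fiber $P\otimes_K k(\fp)$ of $\varkappa$ is a localization of the domain $P$ with field of fractions $k(\fm)$, and it is geometrically reduced over $k(\fp)$ because $\varkappa$ is essentially smooth; consequently $k(\fm)$ is a separable—and, being finitely generated, separably generated—extension of $k(\fp)$. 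For such an extension the classical bound is an equality, so $\trdeg_\varkappa k(\fm)=\dim_{k(\fm)}\Omega_{k(\fm)|k(\fp)}=d$.

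The argument has no substantive obstacle beyond bookkeeping: one must verify that $\fp\subseteq\fm$ makes $K\to k(\fm)$ factor through $k(\fp)$ and forces $\Omega_{k(\fm)|P}=0$, and, for the equality case, one must invoke the characterization of separably generated field extensions via geometric reducedness in order to convert the relative-dimension hypothesis into the identity $\dim_{k(\fm)}\Omega_{k(\fm)|k(\fp)}=\trdeg_{k(\fp)}k(\fm)$.
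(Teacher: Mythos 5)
Your proof is correct and follows essentially the same route as the paper's: both produce a surjection from the $d$-dimensional $k(\fm)$-vector space $\Omega_{P|K}\otimes_Pk(\fm)$ onto $\Omega_{k(\fm)|k(\fp)}$ and then invoke the classical bound $\trdeg_{k(\fp)}k(\fm)\le\dim_{k(\fm)}\Omega_{k(\fm)|k(\fp)}$ (Matsumura (26.10)), with the equality case coming from separability of $k(\fm)$ over $k(\fp)$ forced by smoothness. The only cosmetic difference is that you obtain the surjection from the conormal sequence of $K\to P\to k(\fm)$, whereas the paper first base-changes to the fiber $P'=(k(\fp)\otimes_KP)_\fm$ and uses the surjection $P'\to k(\fm)$.
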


  \begin{proof}
Set $k=k(\fm\cap K)$ and $P'=(k\otimes_KP)_\fm$.

The composed homomorphism $\varkappa'\col k\to k\otimes_KP \to
P'$ is essentially of finite type.  Its fibers are among those of
$\varkappa$, so it is essentially smooth, and the canonical isomorphism
$\Omega_{P'|k}\cong(k\otimes_K\Omega_{P|K})_\fm$ shows that it has
relative dimension $d$.

The surjection $P'\to k(\fm)$ induces a surjection $\omega\col
k(\fm)\otimes_{P'}\Omega_{P'|k}\to\Omega_{k(\fm)|k}$.  This gives the
second inequality below, and \cite[(26.10)]{Ma} provides the first one:
  \[
     \trdeg_kk(\fm) \le \rank_{k(\fm)}\Omega_{k(\fm)|k} \le d\,.
  \]

When $P$ is a domain and $\fm=(0)$ one has $P'=k(\fm)$.  In particular,
$\omega$ is an isomorphism, and thus the second inequality above becomes
an equality.  The first inequality also does, as the homomorphism $k\to
k(\fm)$ is essentially smooth.
 \end{proof}

Let $\Min S$ denote the set of minimal prime ideals of $\Spec S$.

\begin{lemma}
  \label{lem:bigrade3}
For all prime ideals $\fq\in\Min S$ and $\fn\in\Spec S$ with $\fq \subseteq\fn$ one has
     \[
     \bigrade{\sigma}\le\bigrade[\fn]{\sigma}\le\bigrade[\fq]{\sigma}=\rtd
     {\sigma}{\fq}\le d\,.
     \]
\end{lemma}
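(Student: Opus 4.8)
The plan is to assemble the chain of (in)equalities
\[
\bigrade{\sigma}\le\bigrade[\fn]{\sigma}\le\bigrade[\fq]{\sigma}=\rtd{\sigma}{\fq}\le d
\]
from pieces already available, the only genuinely new input being the middle equality $\bigrade[\fq]{\sigma}=\rtd{\sigma}{\fq}$ for a \emph{minimal} prime $\fq$. The two outer inequalities and the left inequality are free: the first inequality $\bigrade{\sigma}\le\bigrade[\fn]{\sigma}$ is immediate from Lemma~\ref{lem:bigrade2} (it is literally the displayed inequality $\bigrade[\fn]{\sigma}\ge g$ read off in its proof), the last inequality $\rtd{\sigma}{\fq}\le d$ is the first assertion of Lemma~\ref{smooth:reldim} applied to the prime $\fq\cap P$ of $P$, and the middle inequality $\bigrade[\fn]{\sigma}\le\bigrade[\fq]{\sigma}$ will follow once the equality at the right end is known, by applying the first inequality of the lemma itself to the localization $\sigma_\fn\col K_{\fn\cap K}\to S_\fn$ (for which $\fq S_\fn$ is a minimal prime contained in $\fn S_\fn$) together with the compatibility isomorphism in \ref{tools}.

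So the heart of the matter is the equality $\bigrade[\fq]{\sigma}=\rtd{\sigma}{\fq}$ for $\fq\in\Min S$. First I would localize: by \ref{tools} one has $\bigrade[\fq]{\sigma}=\bigrade[]{\sigma_\fq}$ where $\sigma_\fq\col K_{\fq\cap K}\to S_\fq$, and $\rtd{\sigma}{\fq}=\rtd{\sigma_\fq}{\fq S_\fq}$, so we may assume $S=S_\fq$ is a local ring with maximal ideal $\fq$ and nilpotent, in particular with $S/\fq$ of Krull dimension zero over itself, i.e. $\trdeg_{k(\fq\cap K)}k(\fq)$ equals the relative dimension of the essentially smooth map $K\to P_{\fq\cap P}$ composed with passage to the residue field $k(\fq)$. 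Next I would invoke the reduction formula \ref{computation:smooth}, which gives $\Ext n{\env S}S{\env S}\cong\Ext{n-d}S{\Rhom PS{\bwedge^d_P\Omega_{P|K}}}S$; combined with \ref{supports} (the support equality that lets us replace $\bwedge^d_P\Omega_{P|K}$ by $P$ on supports) and the fact that $\fq$ is a minimal, hence associated, prime of $S$, this reduces the claim to computing the least $n$ with $\Ext nPSP\ne0$ after localizing at $\fq$ — that is, to identifying $\grade_{P'}S'$ for $P'=P_{\fq\cap P}$, $S'=S_\fq$, as $d-\rtd{\sigma}{\fq}$; here one uses that $\pd_{P'}S'=\fd_{K_{\fq\cap K}}S'=0$ by flatness of $\sigma$ together with \ref{fpd:smooth}, so $S'$ is a perfect $P'$-module and $\grade_{P'}S'=\pd_{P'}S'$ is governed by the fiber dimension.

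Concretely, write $P'\to S'$ and let $\fp=\fq\cap P$; since $\sigma$ is flat, $\fd_{K_{\fq\cap K}}S'=0$, so \ref{fpd:smooth} gives $\pd_{P'}S'=:p'$ finite, and then by the Auslander--Buchsbaum equality $p'=\depth P'-\depth_{P'}S'=\height\fp-\depth_{P'}S'$; because $S'$ is local Artinian over itself modulo its nilradical its depth over $P'$ equals $\dim(S'/\fp S')=\height(\fp/(\text{the contraction of the nilradical}))$, and unwinding dimensions of fibers of the essentially smooth map $\varkappa$ at $\fp$ one gets $p'=d-\trdeg_{k(\fq\cap K)}k(\fq)$. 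Since $S'$ is perfect over $P'$ one has $\grade_{P'}S'=\pd_{P'}S'=p'$, and hence $\Ext nPSP_\fq=0$ for $n<p'$ while $\Ext{p'}PSP_\fq\ne0$; feeding this through \ref{computation:smooth} and \ref{supports} yields $\bigrade[]{\sigma_\fq}=p'+d-\dots$ — I will need to be careful with the degree shift by $d$ here, and reconcile it so the net result is exactly $\rtd{\sigma}{\fq}$. That bookkeeping of the shift, and verifying that non-vanishing of the relevant $\Ext$ at the critical degree really survives localization at the associated prime $\fq$ (which is where minimality of $\fq$ is used, via $\fq\in\Ass S$), is the step I expect to be the main obstacle; everything else is formal from the already-cited lemmas.
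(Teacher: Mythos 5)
Your treatment of the outer (in)equalities is fine and matches the paper: the two left-hand inequalities come from Lemma~\ref{lem:bigrade2} together with $\sigma_\fq=(\sigma_\fn)_{\fq S_\fn}$ and \ref{tools}, and $\rtd{\sigma}{\fq}\le d$ comes from Lemma~\ref{smooth:reldim}. The problem is the heart of the lemma, the equality $\bigrade[\fq]{\sigma}=\rtd{\sigma}{\fq}$, which you do not actually establish. First, the assertion ``$\pd_{P'}S'=\fd_{K_{\fq\cap K}}S'=0$ by flatness together with \ref{fpd:smooth}'' is false: \ref{fpd:smooth} gives only $\fd_KM\le\pd_PM\le\fd_KM+d$, so flatness yields $\pd_{P'}S'\le d$, not $\pd_{P'}S'=0$ (already for $K$ a field, $P=K[x]_{(x)}$ and $S=P/(x)=K$ one has $\fd_KS=0$ but $\pd_PS=1$). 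Second, the number you actually need, via Lemma~\ref{lem:bigrade1}, is $\pd_{P_{\fq\cap P}}S_\fq=d-t$ with $t=\rtd{\sigma}{\fq}$, and your Auslander--Buchsbaum sketch leaves precisely the required inputs unverified: that $\depth_{P_{\fq\cap P}}S_\fq=0$ (because $S_\fq$ is artinian, $\fq$ being minimal), that $\depth K_{\fq\cap K}=0$ (because $K_{\fq\cap K}\to S_\fq$ is flat local with artinian target, forcing $K_{\fq\cap K}$ artinian --- without this, $\depth P_{\fq\cap P}$ need not equal the fiber dimension, since $K$ is only assumed noetherian), and that the localized fiber of $\varkappa$ at $\fq\cap P$ has dimension $d-t$. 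You flag this ``bookkeeping'' as the main obstacle and do not resolve it, so the proof is incomplete at exactly the point where the content lies. (One small simplification you could have used: once $p'=\pd_{P_{\fq\cap P}}S_\fq$ is known, the hypothesis of Lemma~\ref{lem:bigrade1} is automatic here, since $\Spec S_\fq$ is a single point and $\Ext{p'}{P_{\fq\cap P}}{S_\fq}{P_{\fq\cap P}}\ne0$; no perfectness argument is needed.)

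It is worth recording how the paper sidesteps all of this. Instead of localizing the given factorization of relative dimension $d$ and computing a projective dimension, it builds a \emph{new} essentially smooth{-}by{-}finite factorization $K_{\fq\cap K}\to Q\to S_\fq$ of relative dimension exactly $t$: $Q$ is the localization of $K_{\fq\cap K}[x_1,\dots,x_t]$ at the extension of the maximal ideal, the $x_i$ mapping to lifts of a transcendence basis of $k(\fq)$ over $k(\fq\cap K)$; a length count shows $Q\to S_\fq$ is finite, $Q$ is artinian, and $S_\fq$ is then \emph{free} over $Q$ by \ref{fpd:smooth}. Lemma~\ref{lem:bigrade1} applied to this factorization gives $\bigrade[\fq]{\sigma}=t-0=t$ with no depth or dimension computation at all. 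If you want to salvage your route through the original $P$, you must supply the three facts listed above; otherwise the Noether-normalization-style construction is the cleaner path.
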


\begin{proof}
Both inequalities on the left come from Lemma~\ref{lem:bigrade2}, as
one has $\sigma_\fq=(\sigma_\fn)_{\fq S_\fn}$.

Set $\fp=\fq\cap K$.  The rings $S_\fq$ and $K_\fp$ are artinian,
the first one because the ideal $\fq$ is minimal, the second because
$\sigma_\fq\col K_\fp\to S_\fq$ is a flat local homomorphism.

Set $k=k(\fp)$, $l=k(\fq)$, and $t=\trdeg_kl$.  Choose in $S_\fq$
elements $y_1,\dots,y_t$ that map to a transcendence basis of $l$
over $k$.  Let $x_1,\dots,x_t$ be indeterminates over $K_\fp$ and $Q$
the localization of $K_\fp[x_1,\dots,x_t]$ at the prime ideal $\fp
K_\fp[x_1,\dots,x_t]$; this is a local ring with maximal ideal $\fp Q$
and residue field $k'=k(x_1,\dots,x_t)$.

The homomorphism of $K_\fp$-algebras $K_\fp[x_1,\dots,x_t]\to S_\fq$
sending $x_i$ to $y_i$ for $i=1,\dots,t$ induces a local homomorphism
$\vf\col Q\to S_\fq$.  A length count yields
\[
      \length_{Q}(S_\fq)=\length_{S_\fq}(S_\fq)\length_{k'}(l)<\infty\,,
\]
so $\vf$ is finite.  Let $\kappa$ denote the composition $K_\fp\to
K_\fp[x_1,\dots,x_t]\to Q$.  It is local, flat, essentially of finite
type, and the fiber $Q\otimes_{K_\fp}k$ is equal to $k'$.

One has $\sigma_\fq=\vf\kappa$, so this is a smooth{-}by{-}finite
factorization of relative dimension $t$ by the discussion above.
The finite $Q$-module $S_\fq$ has finite projective dimension by
Lemma~\ref{fpd:smooth}, so it is free because $Q$ is artinian.  By the
same token, one has $\Supp_{S_\fq}\Ext 0Q{S_\fp}Q=\fq S_\fq=\Ass{S_
\fq}$, so Lemma~\ref{lem:bigrade1} yields $\bigrade[\fq]{\sigma}=t$.

Finally, set $\fm=\fq\cap Q$.  As the field extension $k(\fm) \subseteq
k(\fq)$ is finite, one has $t=\trdeg_k{k(\fm)}$.  On the other hand,
Lemma \ref{smooth:reldim} yields $t\le d$.
 \end{proof}

  \begin{proof}[Proof of Theorem \emph{\ref{thm:bigrade}}]
(1)  The first inequality comes from \ref{fpd:smooth}, the second
one from Lemma~\ref{lem:bigrade1}, the remaining relations from
Lemma~\ref{lem:bigrade3}.

(2)  The desired assertion is part of Lemma~\ref{lem:bigrade1}.

(3)  Assume that $S$ and $P$ are integral domains and set $\fm=\Ker(P
\to S)$.

For each $\fn\in \Spec S$, the projective dimension of $S_{\fn}$ over
$P_{\fn\cap P}$ is finite, see \ref{fpd:smooth}.  Thus \cite[(2.5)]{AF:cm}
yields that $ \grade_{P_{\fn\cap P}}S_{\fn}= \dim P_{\fm}$, so one obtains
\[
      \grade_PS=\inf\{\grade_{P_{\fn\cap P}}S_{\fn}\var\fn\in\Spec S\}= \dim P_{\fm}\,.
\]
Set $S'=S_{(0)}$ and $K'=K_{(0S)\cap K}$.  As $S'$ is the residue field
of $P_\fm$, and $K'\to S'$ is a flat local homomorphism, one sees that
$K'$ is a field.  The local domain $P_\fm$ is the localization of some
finitely generated $K'$-algebra $P'$ at a prime ideal $\fm'$, so one has
 \[
      \dim P_{\fm}=\height(\fm')=\dim
      P'-\dim(P'/\fm')=\trdeg_{K'}P'_{(0S)\cap P'}-\trdeg_{K'}S'\,.
 \]

To finish the proof, note that Lemma~\ref{smooth:reldim} yields
$d=\trdeg_{K'}P'$.
  \end{proof}

Formal properties of Hochschild cohomology have implications
for bigrade:

\begin{remark}
\label{bigrade:base_change}
For any homomorphism of rings $K\to K'$ we identify $K'$ and
$K\otimes_KK'$ via the canonical isomorphism, set $S'=S\otimes_KK'$, and
note that $\sigma\otimes_KK'\col K'\to S'$ is (essentially) of finite type
and flat, along with $\sigma$.  Also, set $\env{S'}=S'\otimes_{K'}S'$.

When $K\to K'$ is flat so is $\env S\to\env{S'}$, due to the canonical
isomorphism of $K'$-algebras $\env{S'}\cong\env S\otimes_KK'$.  Thus,
for each $n\in\BZ$ one gets isomorphisms
\[
      \Ext n{\env{S'}}{S'}{\env{S'}} \cong \Ext n{\env S}S{\env{S'}} \cong \Ext n{\env
        S}{S}{\env S}\otimes_SS'\,.
\]

As a consequence, for every flat homomorphism $K\to K'$ one obtains
\[
      \bigrade{\sigma}{}\le\bigrade{\sigma\otimes_KK'}\,;
\]
equality holds when $K'$ is faithfully flat over $K$.
\end{remark}

\begin{remark}
  \label{tools2}
Let $\tau\col K\to T$ be a flat homomorphism essentially of finite type.  

For $R=S\times T$, and for each $n\in\BZ$ there are canonical isomorphisms
   \begin{gather*}
\Ext{n}{\env R}{R}{\env R}
  \cong \big(S\otimes_R\Ext{n}{\env R}{R}{\env R}\big)
  \oplus \big(T\otimes_R\Ext{n}{\env R}{R}{\env R}\big)\,,
    \\
S\otimes_R\Ext{n}{\env R}{R}{\env R}
  \cong\Ext{n}{\env S}{S}{\env S}\,,
    \\
T\otimes_R\Ext{n}{\env R}{R}{\env R}
  \cong\Ext{n}{\env T}{T}{\env T}\,,
   \end{gather*}
of $R$-modules, $S$-modules, and $T$-modules, respectively.

In particular, for the diagonal map $\delta\col K\to K\times K$ the
following equality holds:
\[
  \bigrade{(\sigma\times\tau)\delta}=\min\{\bigrade\sigma\,,\,\bigrade \tau\}\,.
\]
\end{remark}

We illustrate Theorem \ref{thm:bigrade} in a concrete situation.  When every
associated prime ideal $\fq$ of $S$ satisfies $\dim(S/\fq)=\dim S$
we say that $S$ is \emph{equidimensional}.

\begin{proposition}
\label{graded}
Let $K$ be a field and $S$ an $\BN$-graded $K$-algebra, generated by
finitely many elements in $S_{\ges 1}$.  For the inclusion $\sigma\col
K \to S$ one then has
\[
      \depth S\le\bigrade\sigma\le\dim S\,.
\]
The first inequality is strict when $S$ is equidimensional, but not
{Cohen-Macaulay}.
 \end{proposition}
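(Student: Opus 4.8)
The proposition has two parts. The plan is to handle the inequalities first, then the strictness claim. For the upper bound $\bigrade\sigma\le\dim S$: choose a Noether normalization, i.e.\ a polynomial subring $P=K[t_1,\dots,t_d]$ with $d=\dim S$ over which $S$ is finite. Here $K\to P$ is essentially smooth (in fact smooth) of relative dimension $d$, and $K\to P\to S$ is a smooth-by-finite factorization. For a minimal prime $\fq$ of $S$ one has, from Theorem~\ref{thm:bigrade}(1), the chain $\bigrade\sigma\le\bigrade[\fq]\sigma=\rtd{\sigma}{\fq}\le d$. Since $\rtd{\sigma}{\fq}=\trdeg_K k(\fq)\le d$ and the last term is exactly $\dim S$, this gives $\bigrade\sigma\le\dim S$. (In the graded setting one could also invoke that all minimal primes lie in degree $0$ considerations, but Theorem~\ref{thm:bigrade}(1) already delivers this directly.)

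For the lower bound $\depth S\le\bigrade\sigma$: I would use the reduction formula \ref{computation:smooth}, which gives $\Ext{n}{\env S}S{\env S}\cong\Ext{n-d}S{\Rhom PS{\bwedge^d_P\Omega_{P|K}}}S$. Setting $D=\Rhom PS{\bwedge^d_P\Omega_{P|K}}$, one has $\HH{-i}D\cong\Ext iPSP$ (up to the invertible twist, which is harmless by \ref{supports}), and since $\pd_PS=d-\depth S$ by Auslander--Buchsbaum (over the local-ish ring $P$, or rather using that $P$ is graded Cohen-Macaulay of dimension $d$ and $\depth_P S=\depth_S S=\depth S$), the homology of $D$ is concentrated in degrees $-p,\dots$ with $p=\pd_PS$. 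Thus $\Ext{n}{\env S}S{\env S}\cong\Ext{n-d}SDS$ vanishes for $n-d<-p$, i.e.\ for $n<d-p=\depth S$, giving $\bigrade\sigma\ge\depth S$. This is essentially the content of Lemma~\ref{lem:bigrade1}, so I would simply cite it: the first inequality in Theorem~\ref{thm:bigrade}(1) reads $\bigrade\sigma\ge d-\pd_PS=\depth S$.

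For the strictness claim, suppose $S$ is equidimensional but not Cohen--Macaulay. By Lemma~\ref{lem:bigrade1}, equality $\bigrade\sigma=d-\pd_PS=\depth S$ holds if and only if $\Ass S\cap\Supp_S\Ext{p}PSP\ne\varnothing$ with $p=\pd_PS$. I would argue this intersection is empty: by local duality over the graded Cohen--Macaulay ring $P$ (or by \cite[(2.5)]{AF:cm}-type grade computations), for a prime $\fn\in\Spec S$ one has $\fn\in\Supp_S\Ext pPSP$ iff $\pd_{P_{\fn\cap P}}S_\fn=p$, iff $\depth S_\fn=d-p=\depth S$, iff $\depth_{S_\fn}S_\fn\le\depth S$ — and such primes have $\height(\fn\cap P)=\dim P-\dim(P/\ldots)$ forcing $\dim S/\fn$ to be small. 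On the other hand, an associated prime $\fq\in\Ass S$ is minimal (as $S$ is graded and has no embedded primes would be too strong — rather, $\fq\in\Ass S$ in the graded equidimensional case still could be embedded!). The key point: if $\fq\in\Ass S$ then $\depth S_\fq=0$, while equidimensionality gives $\dim S_\fq=\height\fq$ and $\dim S/\fq=\dim S$, so $\height\fq=\dim S-\dim S/\fq$... I must be careful here. The cleanest route: $\fq\in\Ass S\cap\Supp_S\Ext pPSP$ would force, via \ref{fpd:smooth} and Auslander--Buchsbaum, $\pd_{P_{\fq\cap P}}S_\fq=\depth P_{\fq\cap P}-\depth S_\fq=\height(\fq\cap P)-0$; equidimensionality of $S$ (hence of $P/\ann_PS$ since $S$ finite over $P$) gives $\height(\fq\cap P)=d-\dim S/\fq=d-\dim S=0$, so $\fq\cap P=(0)$ and $\pd_{P_{(0)}}S_\fq=0$, meaning $S_\fq$ is $P_{(0)}$-free, hence $p=\pd_PS$ would satisfy $p\ge$ that local value $=0$, which is no contradiction unless $p>0$. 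Since $S$ is not Cohen--Macaulay, $p=\pd_PS=d-\depth S>d-\dim S=0$, so indeed $p>0$ while every associated prime sees local projective dimension $0$; thus $\Ass S\cap\Supp_S\Ext pPSP=\varnothing$, and by Lemma~\ref{lem:bigrade1} the inequality $\depth S\le\bigrade\sigma$ is strict.

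**Main obstacle.** The delicate point is the last paragraph: correctly relating membership in $\Supp_S\Ext pPSP$ to a local-depth condition and then using graded equidimensionality to pin down $\height(\fq\cap P)$ for $\fq\in\Ass S$. One must be careful that equidimensionality of $S$ transfers to the relevant quotient of $P$ and that the Auslander--Buchsbaum formula is applied over the correct (local) rings; the graded hypothesis ensures $\depth$ and $\dim$ behave as over the irrelevant-maximal-ideal localization, which is what makes ``$\depth S$'' and ``$\dim S$'' unambiguous. I expect the inequalities themselves to be immediate from Theorem~\ref{thm:bigrade}(1) and Lemma~\ref{lem:bigrade1}; the real work is verifying the emptiness of that intersection cleanly.
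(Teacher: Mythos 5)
Your proposal is correct and follows essentially the same route as the paper: Noether normalization by a homogeneous system of parameters, the graded Auslander--Buchsbaum equality $d-\pd_PS=\depth S$, Theorem~\ref{thm:bigrade}(1) for both inequalities, and for strictness the observation that equidimensionality forces $\fq\cap P=(0)$ for $\fq\in\Ass S$ so that $\Ext pPSP_\fq$ vanishes (as $P_{(0)}$ is a field and $p>0$), whence $\Ass S\cap\Supp_S\Ext pPSP=\varnothing$ and Lemma~\ref{lem:bigrade1} applies. The exploratory detour in your last paragraph is unnecessary --- note the paper's definition of equidimensional already quantifies over all associated primes --- but the ``cleanest route'' you settle on is exactly the paper's argument.
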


 \begin{proof}
Set $d=\dim S$, and let $P$ be the $K$-subalgebra of $S$ generated by
some homogeneous system of parameters.  Thus, $P$ is a polynomial ring
in $d$ variables and $K\to P\to S$ is a smooth{-}by{-}finite factorization
of relative dimension $d$.

Set $p=\pd_PS$.  The Auslander-Buchsbaum Equality gives $d-p=\depth S$, so
Theorem~\ref{thm:bigrade}(1) gives the desired inequalities.  When $S$ is
equidimensional and $\fq$ is in $\Ass S$ one has $\dim(S/\fq)=d= \dim P$.
As $S/\fq$ is finite over $P$ and $P$ is a domain, this implies $\fq\cap
P=(0)$. When $S$ is not {Cohen-Macaulay} one has $p>0$, hence
  \begin{align*}
        \Ext pPSP_{\fq} &\cong(\Ext p{P}{S}{P}\otimes_PP_{(0)})\otimes_{P_{(0)}}S_\fq
        \\
        &\cong\Ext p{P_{(0)}}{S_{(0)}}{P_{(0)}}\otimes_{P_{(0)}}S_\fq
        \\
        &=0
  \end{align*} 
{}From Theorem~\ref{thm:bigrade}(2) we now conclude that
$\bigrade\sigma>d-p$ holds.
 \end{proof}

Next we show that the second inequality in the proposition can be strict
as well.

\begin{example}
  When $K$ is a field of characteristic zero the subring
\[
      S=K[x^3,x^2y,x^2z,xy^2,xz^2,y^3,y^2z,yz^2,z^3]
\]
of a polynomial ring $K[x,y,z]$ and the inclusion map $\sigma\col K\to S$ satisfy
\[
      \bigrade{\sigma}=2<3=\dim S\,.
\]

Indeed, for the equality on the right note that the field of fractions
of $S$ is equal to $K(x,y,z)$.  The one on the left results from the
isomorphisms
\[
      \Ext n{\env S}S{\env S} \cong
      \begin{cases}
        0\quad&\text{for }n\le1\,;
        \\
        K\quad&\text{for }n=2\,.
      \end{cases}
\]
For $K=\BQ$ they are established through a computation with
\textsl{Macaulay~2}.  The general case follows from here and
Remark~\ref{bigrade:base_change}.
 \end{example}

\section{{Cohen-Macaulay} homomorphisms}
\label{{Cohen-Macaulay} homomorphisms}

Recall that a flat homomorphism $K\to S$ is said to be
\emph{{Cohen-Macaulay} at a prime ideal} $\fn$ of $S$ if the local
ring $S_\fn/(\fn\cap K)S_\fn$ is {Cohen-Macaulay}; the homomorphism is
\emph{{Cohen-Macaulay}} if it has this property at each $\fn\in \Spec S$.

We fix notation and hypotheses for the rest of the section:

 \begin{chunk}
\label{setup:cm}
Let $K$ be a noetherian ring, $\sigma\col K\to S$ a flat homomorphism
essentially of finite type, and $K\to P\to S$ an essentially
smooth{-}by{-}surjective factorization of $\sigma$ of relative dimension
$d$.  We assume that $\Spec S$ is connected (but see Remark~\ref{tools2}).
  \end{chunk}

\begin{theorem}
\label{defs:cm2}
The following conditions are equivalent.
\begin{enumerate}[\rm\quad(i)]
      \item $\sigma$ is {Cohen-Macaulay}.
      \item $\grade_PS=\grade_{P_{\fn\cap P}}S_\fn=\pd_{P_{\fn\cap P}}S_\fn=\pd_PS$
for every $\fn\in\Spec S$.
      \item $\grade_PS=\pd_PS$.
     \item $\Ext nPSP=0$ for $\grade_PS< n\le d$.
\end{enumerate}
The homomorphism $\sigma$ is Gorenstein if and only if it is Cohen-Macaulay and the $S$-module 
$\Ext gPSP$ is invertible.  
  \end{theorem}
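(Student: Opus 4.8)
The plan is to establish that (i)--(iv) are equivalent and then deduce the Gorenstein criterion at the end. Throughout I would use two preliminaries. First, since $\sigma$ is flat one has $\fd_KS=0$, so $\pd_PS\le d$ by \ref{fpd:smooth}; in particular $\pd_PS$ is finite, $\grade_PS\le\pd_PS$, the module $\Ext nPSP$ vanishes for $n<\grade_PS$ and for $n>\pd_PS$, and $\Ext{\pd_PS}PSP\ne0$ (localize $S$ at a maximal ideal of $P$ realizing its projective dimension and use minimality of a free resolution). Second, $P\to S$ is surjective, so for each $\fn\in\Spec S$ with $\fp=\fn\cap P$ one has $S_\fn\cong S\otimes_PP_\fp$ and hence a canonical identification $\Ext nPSP_\fn\cong\Ext n{P_\fp}{S_\fn}{P_\fp}$ for every $n$. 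With these, (iii)$\iff$(iv) is immediate: if $\grade_PS=\pd_PS$ then $\Ext nPSP=0$ for all $n>\grade_PS$, giving (iv); conversely (iv), together with $\grade_PS\le\pd_PS\le d$ and $\Ext{\pd_PS}PSP\ne0$, rules out $\grade_PS<\pd_PS$, so $\grade_PS=\pd_PS$.

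The last sentence of the theorem I would then obtain formally. The factorization $K\to P\to S$ is Gorenstein{-}by{-}finite, so Theorem~\ref{finite:gorcriteria} and Corollary~\ref{cor:gor} (recall $\Spec S$ is connected) say that $\sigma$ is quasi-Gorenstein if and only if $\Ext gPSP$ is invertible, where $g=\grade_PS$, and $\Ext nPSP=0$ for $n\ne g$; and since $\sigma$ is flat, being quasi-Gorenstein is equivalent to being Gorenstein by \cite[(8.1)]{AF:qG}. But the condition ``$\Ext nPSP=0$ for all $n\ne\grade_PS$'' says exactly that $\grade_PS=\pd_PS$, that is, (iii), and hence (i). Thus $\sigma$ is Gorenstein if and only if $\sigma$ is {Cohen-Macaulay} and the $S$-module $\Ext gPSP$ is invertible.

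So everything rests on the cycle (i)$\implies$(iii)$\implies$(ii)$\implies$(i). The bridge is the local statement that, for $\fn\in\Spec S$ with $\fp=\fn\cap P$ and $\fq=\fn\cap K$, the map $\sigma$ is {Cohen-Macaulay} at $\fn$ if and only if the $P_\fp$-module $S_\fn$ is perfect, that is, $\grade_{P_\fp}S_\fn=\pd_{P_\fp}S_\fn$. I would prove this by reduction to the fiber: set $\overline P=P_\fp/\fq P_\fp$ and $F=S_\fn/\fq S_\fn$; smoothness of $\varkappa$ makes $\overline P$ a regular local ring, and flatness of $\sigma_\fn$ over $K_\fq$ (with $P_\fp$ flat over $K_\fq$) lets one base-change a minimal $P_\fp$-free resolution of $S_\fn$ to a minimal $\overline P$-free resolution of $F$, so $\pd_{P_\fp}S_\fn=\pd_{\overline P}F=\dim\overline P-\depth F$ by Auslander--Buchsbaum; the matching identity $\grade_{P_\fp}S_\fn=\dim\overline P-\dim F$ holds because, $\pd_{P_\fp}S_\fn$ being finite, this grade depends only on $\sigma_\fn$ and is computed over the regular---hence {Cohen-Macaulay}---fiber \cite[(2.5)]{AF:cm}. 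Subtracting gives $\pd_{P_\fp}S_\fn-\grade_{P_\fp}S_\fn=\dim F-\depth F$, which vanishes precisely when $F$ is {Cohen-Macaulay}. Granting this, (iii)$\implies$(ii) is formal---$\grade$ cannot drop and $\pd$ cannot rise under localization, so $\grade_PS\le\grade_{P_\fp}S_\fn\le\pd_{P_\fp}S_\fn\le\pd_PS=\grade_PS$ forces equality everywhere---and (ii)$\implies$(i) is the lemma applied at each $\fn$.

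The remaining implication (i)$\implies$(iii) is where connectedness of $\Spec S$ is indispensable, and I regard it as the heart of the argument. Assuming $\sigma$ is {Cohen-Macaulay}, the lemma makes every $S_\fn$ a perfect $P_\fp$-module, so $\Ext n{P_\fp}{S_\fn}{P_\fp}\ne0$ for the single value $n=r(\fn):=\grade_{P_\fp}S_\fn=\pd_{P_\fp}S_\fn$ and for no other; by the identification above, $\Supp_S\Ext nPSP=\{\fn\in\Spec S\mid r(\fn)=n\}$. These are supports of finite modules, hence closed; they are pairwise disjoint, cover $\Spec S$, and only finitely many are nonempty (as $0\le r(\fn)\le d$), so each is also open, and connectedness forces all but one to be empty. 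Thus $r$ is constant, say $r\equiv g$, whence $\Ext nPSP=0$ for $n\ne g$ and $\grade_PS=g=\pd_PS$. The step I expect to demand real care is the local lemma, and within it the grade clause: because $K$, hence $P$, need not be {Cohen-Macaulay}, one cannot simply write $\grade_{P_\fp}S_\fn=\height\ann_{P_\fp}S_\fn$, and the substitute---factorization-independence of the grade of a homomorphism of finite projective dimension \cite[(2.5)]{AF:cm}---is exactly what makes the passage to the regular fiber $\overline P$ legitimate.
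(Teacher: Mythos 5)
Your proof is correct and follows essentially the same route as the paper's: the local perfection criterion from \cite{AF:cm} (the paper's \ref{cm:localization}, which you re-derive rather than cite), the chain of inequalities in \ref{imperfection}, and Corollary~\ref{cor:gor} plus \cite[(8.1)]{AF:qG} for the Gorenstein statement. The only real variation is how connectedness is used: you observe that the level sets of $\fn\mapsto\grade_{P_{\fn\cap P}}S_\fn$ are finitely many pairwise disjoint closed subsets covering $\Spec S$, hence clopen, while the paper tracks the invariant along chains of specializations; both arguments are valid.
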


The theorem is proved after some reminders  on
localizations of homomorphisms.

\begin{chunk}
\label{imperfection}
For each $\fn\in\Spec S$ the following inequalities hold:
\[
\grade_PS\leq\grade_{P_{\fn\cap P}}S_\fn \leq\pd_{P_{\fn\cap P}}S_\fn\leq\pd_PS\le d\,.
\]
The first three are standard, the last one comes from Lemma~\ref {fpd:smooth}.
\end{chunk}

\begin{chunk}
\label{cm:localization}
The homomorphism $\sigma$ is {Cohen-Macaulay} if and only if for every
$\fn\in\Spec S$ one has $\pd_{P_{\fn\cap P}}S_\fn=\grade_{P_{\fn\cap
P}}S_\fn$; this follows from \cite[(3.5) and (3.7)]{AF:cm}.
  \end{chunk}

\begin{proof}[Proof of Theorem \emph{\ref{defs:cm2}}]
The flat homomorphism $\sigma$ is Gorenstein if and only
if it is quasi-Gorenstein, see \cite[(8.1)]{AF:qG}, so the desired 
criterion for Gorensteiness is a special case of Corollary
\ref{cor:gor}.  The rest of the proof is devoted to 
establishing the equivalence of the conditions in 
Theorem \ref{defs:cm2}.  Set $g=\grade_PS$ and $p=\pd_PS$.

(i)$\implies$(ii).  We start by proving that for all $\fn,\fn'\in\Spec
S$ one has
\[
      \grade_{P_{\fn\cap P}}S_\fn=\pd_{P_{\fn\cap P}}S_\fn= \grade_{P_{\fn'\cap
          P}}S_{\fn'}=\pd_{P_{\fn'\cap P}}S_{\fn'}\,.
\]

For the first and last equalities it suffices to remark that
$\sigma_{\fn}$ and $\sigma_{\fn'}$ are {Cohen-Macaulay} along with
$\sigma$, then refer to~\ref{cm:localization}.  When $\fn$ is contained
in $\fn'$ the equality in the middle is obtained by applying the chain of
inequalities in~\ref{imperfection} to $\sigma_{\fn'}$.  Since $\Spec S$
is connected, when $\fn$ and $\fn'$ are arbitrary one can find in $\Spec
S$ a path
\[
      \fn=\fn_0\supseteq\fn_1\subseteq\fn_2\subseteq\cdots
      \supseteq\fn_{j-1}\subseteq\fn_j=\fn'
\]
The already treated case of embedded prime ideals shows that the
invariants that we are tracking stay constant on each segment of such
a path.

When $\fn$ ranges over $\Spec S$ the infimum of $\grade_{P_{\fn\cap
P}}S_\fn$ equals $g$, and the supremum of $\pd_{P_{\fn\cap P}}S_\fn$
equals $p$, so one gets $g=p$.

(ii)$\implies$(iii)$\implies$(iv) These implications are evident.

(iv)$\implies$(iii).  
{}From \ref{fpd:smooth} one gets $p\le d$.  Since $P$ is noetherian and
$S$ is a finite $P$-module, one has $\Ext pPSP\ne0$.  The definition
of grade and the hypothesis imply that for $n\in[0,d]$ one has $\Ext
nPSP\ne0$ only when $n=g$, so $p=g$ holds.

(iii)$\implies$(ii).  This follows from \ref{imperfection}.

(ii)$\implies$(i).  This follows from \ref{cm:localization}.
  \end{proof}

In the next result we collect some properties of the $S$-module
$\Ext{}PS{{\ts\bwedge}^{d}_P\Omega_{P\var K}}$, for use in the 
next section.  The set up is as in \ref{setup:cm}.

\begin{theorem}
\label{big:cm}
Assume that $\sigma$ is {Cohen-Macaulay} and set $p=\pd_PS$.

For all $\fq\in\Min S$ and $\fn\in\Spec S$ there are equalities
\begin{equation}
  \label{eq:cm}
      \bigrade{\sigma}=\bigrade{\sigma_\fn}=\trdeg_{\sigma}k(\fq)\,.
\end{equation}

The $S$-module $C=\Ext pPS{{\ts\bwedge}^{d}_P\Omega_{P\var K}}$, has
the following properties.
\begin{enumerate}[\rm(1)]
\item
There is an equality $\Ass_SC=\Ass S$.
\item
For each $n\in\BZ$ there is an isomorphism of $S$-modules
  \[
  \Ext n{\env S}S{\env S}\cong \Ext {n-b}SCS \quad\text{where}\quad b=\bigrade\sigma\,.
  \]
\item
If $K$ is Gorenstein, then $C_\fn$ is a canonical module for $S_\fn$ for each
  $\fn\in\Spec S$.
\end{enumerate}
\end{theorem}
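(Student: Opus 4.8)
The plan is to exploit the reduction formula \ref{computation:smooth}, the identification of $\Supp$ in \ref{supports}, and the Cohen-Macaulay hypothesis through Theorem~\ref{defs:cm2}. Write $V=\bwedge^d_P\Omega_{P|K}$, $p=\pd_PS$, and $D=\Rhom PSV$; by \ref{supports} one has $\Supp_S\HH{-n}D=\Supp_S\Ext nPSP$ for every $n$, and by Theorem~\ref{defs:cm2} the Cohen-Macaulay hypothesis forces $\Ext nPSP=0$ for $n\ne g=\grade_PS=p$. Hence $D\simeq\Shift^{-p}C$ in $\dcat[S]$, where $C=\HH{-p}D=\Ext pPSV$. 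First I would establish the equalities \eqref{eq:cm}: by Lemma~\ref{lem:bigrade3} it suffices to show $\bigrade\sigma=\bigrade[\fn]\sigma$ for every $\fn$, and since the invariants $g$ and $p$ are independent of $\fn$ under the Cohen-Macaulay hypothesis (this is precisely Theorem~\ref{defs:cm2}(ii)), Lemma~\ref{lem:bigrade1} applied to $\sigma$ and to each $\sigma_\fn$ gives $\bigrade[\fn]\sigma=d-p$ as soon as $\Ass S_\fn\cap\Supp_{S_\fn}\Ext p{P_{\fn\cap P}}{S_\fn}{P_{\fn\cap P}}\ne\varnothing$; this nonemptiness is the content of part (1), which I would prove first.

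For part (1): since $D\simeq\Shift^{-p}C$, the defining isomorphism $\Rhom PSV\simeq D$ together with the fact that $V$ is invertible over $P$ gives, after localizing at any $\fq\in\Spec S$ and tensoring, that $\Rhom {P_{\fq\cap P}}{S_\fq}{P_{\fq\cap P}}\simeq\Shift^{-p_\fq}C_\fq$ where $p_\fq=\pd_{P_{\fq\cap P}}S_\fq$; under the Cohen-Macaulay hypothesis $p_\fq=p$ for all $\fq$ by Theorem~\ref{defs:cm2}(ii), so $C_\fq\cong\Ext p{P_{\fq\cap P}}{S_\fq}{P_{\fq\cap P}}$. When $\fq\in\Min S$ the ring $S_\fq$ is artinian and $\fq\cap K$-flat, so (as in the proof of Lemma~\ref{lem:bigrade3}) $S_\fq$ is free over a suitable artinian localization of a polynomial ring, whence $\Ext 0{}{S_\fq}{}\ne0$, i.e.\ $\fq\in\Supp_SC$; thus $\Min S\subseteq\Supp_SC\subseteq\Ass_SC\subseteq\Ass S$, and since $\Ass_SC\subseteq\Supp_SC$ and $\Min S\subseteq\Ass S$ always hold with $\Supp_SC$ closed, I would chase these containments to conclude $\Ass_SC=\Ass S$ — here the one real point is that $\Supp_SC=\Spec S$, which follows because $\Supp_SC$ contains all minimal primes and is closed. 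This settles (1), and then \eqref{eq:cm} follows as sketched above via Lemmas~\ref{lem:bigrade1} and~\ref{lem:bigrade3}.

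For part (2): with $b=\bigrade\sigma=d-p$ established, \ref{computation:smooth} gives $\Ext n{\env S}S{\env S}\cong\Ext{n-d}SDS$, and substituting $D\simeq\Shift^{-p}C$ yields $\Ext{n-d}SDS\cong\Ext{n-d+p}SCS=\Ext{n-b}SCS$, which is the claimed formula. For part (3): localize at $\fn$; as above $C_\fn\cong\Ext p{P'}{S_\fn}{P'}$ with $P'=P_{\fn\cap P}$ a localization of a polynomial ring over $K$, and $p=\grade_{P'}S_\fn$. When $K$ is Gorenstein, $P'$ is Gorenstein, hence $\Ext p{P'}{S_\fn}{P'}$ is the canonical module of $S_\fn$ by the standard theory of canonical modules over Cohen-Macaulay quotients of Gorenstein rings (the Cohen-Macaulay hypothesis on $\sigma$ together with $K$ Gorenstein makes $S_\fn$ Cohen-Macaulay, and for a Cohen-Macaulay module of codimension $c$ over a Gorenstein local ring the module $\Ext c{}{}{}$ is the canonical module). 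The main obstacle I anticipate is part (1): one must be careful that $\Supp_SC=\Spec S$ rather than merely that $C\ne0$, and that the passage from the global to the local Ext modules is legitimate — this rests on $P$ being noetherian, $S$ finite over $P$, and the $\fn$-independence of $p$ from the Cohen-Macaulay hypothesis; once $C_\fn\cong\Ext p{P_{\fn\cap P}}{S_\fn}{P_{\fn\cap P}}$ for all $\fn$ is in hand, the rest is bookkeeping with the reduction formula.
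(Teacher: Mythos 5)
Your overall architecture --- reduce via \ref{computation:smooth} and the quasi-isomorphism $\Rhom PS{V}\simeq\Shift^{-p}C$ with $V={\ts\bwedge}^d_P\Omega_{P|K}$, obtain $\bigrade\sigma=d-p$ from Lemma~\ref{lem:bigrade1} and Theorem~\ref{thm:bigrade}(2), localize to get \eqref{eq:cm}, and quote the standard description of canonical modules over Gorenstein presentations for (3) --- is the paper's, and your treatment of \eqref{eq:cm}, (2) and (3) is sound; note that for \eqref{eq:cm} and (2) one only needs $\Ass S\cap\Supp_SC\ne\varnothing$, which your support argument does deliver.

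The gap is in part (1). The containment $\Supp_SC\subseteq\Ass_SC$ in your chain is backwards, and the implication you ultimately lean on --- that $\Supp_SC=\Spec S$ forces $\Ass_SC=\Ass S$ --- is false. Full support only gives $\Min S\subseteq\Ass_SC$; it controls neither the embedded associated primes of $C$ nor those of $S$ (and $S$ need not be {Cohen-Macaulay} as a ring, only $\sigma$ is, so $\Ass S$ may strictly contain $\Min S$). For instance $C=S\oplus S/xS$ over $S=k[x]$ has full support yet $\Ass_SC\ne\Ass S$. What is actually needed is a depth comparison: take a resolution $F\xra{\simeq}S$ by finite projective $P$-modules concentrated in degrees $[0,p]$; then $\Hom PFV$ resolves $C$ and the canonical isomorphism $\Hom P{\Hom PFV}V\cong F$ shows that $\pd_{P_{\fn\cap P}}C_\fn=\pd_{P_{\fn\cap P}}S_\fn$ for every $\fn\in\Spec S$. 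The Auslander-Buchsbaum equality converts this into $\depth_{S_\fn}C_\fn=\depth_{S_\fn}S_\fn$, and since membership of $\fn$ in the associated primes of a finite module is equivalent to vanishing of the localized depth, this yields $\Ass_SC=\Ass S$. This biduality step is the essential new ingredient of part (1) and is absent from your proposal.
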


\begin{proof}
Set $V={\ts\bwedge}^{d}_P\Omega_{P\var K}$.  We deal with $C=\Ext pPSV$
first.

Let $F\xra{\simeq}S$ be a resolution with each $F_i$ finite projective
over $P$ and $F_i=0$ for $i\notin[0,p]$.  Theorem \ref{defs:cm2} yields
$\grade_PS=p$, so one has $\Ext nPSV=0$ for $n\ne p$; see~\eqref{eq:supports2}.
Thus, one gets quasi-isomorphisms of complexes of $P$-modules
\begin{equation}
  \label{eq:dualizing}
  \Rhom PSV\simeq\Hom PFV\simeq\Shift^{-p}C
\end{equation}
Each $\Hom P{F_i}V$ is finite projective, and one has isomorphisms of
complexes
\begin{align*}
  \Hom P{\Hom PFV}V &\cong\Hom P{{\Hom PFP}\otimes_PV}V
  \\
  &\cong\Hom P{\Hom PFP}{\Hom PVV}
  \\
  &\cong\Hom P{\Hom PFP}P
  \\
  &\cong F
\end{align*}
These computations localize.  In particular, for each $\fn\in\Spec S$ one gets
\begin{equation}
  \label{eq:pd}
\pd_{P_{\fn\cap P}}C_{\fn}=\pd_{P_{\fn\cap P}}S_{\fn}\,.
\end{equation}

(1)  An ideal $\fn\in\Spec S$ is associated to $C$ if and only if
$\depth_{S_\fn}{C_\fn}=0$.  The finiteness
of $S_\fn$ as a $P_{\fn\cap P}$-module and the Auslander-Buchsbaum
Equality yield
\[
\depth_{S_\fn}C_\fn=\depth_{P_{\fn\cap P}}C_\fn= \depth_{P_{\fn\cap P}}P_{\fn\cap
  P}-\pd_{P_{\fn\cap P}}C_{\fn}\,.
\]
Thus, $\fn\in\Ass_SC$ is equivalent to $\pd_{P_{\fn\cap P}}C_{\fn}=
\depth_{P_{\fn\cap P}}P_{\fn\cap P}$.  Similarly, $\fn\in\Ass S$ amounts 
to $\pd_{P_{\fn\cap P}}S_{\fn}=\depth_{P_{\fn\cap P}}P_{\fn\cap P}$.  
Now \eqref{eq:pd} gives $\Ass_SC=\Ass S.$

(2) From \eqref{eq:dualizing} and \ref{computation:smooth} for each $n\in\BZ$ one
gets
\[
\Ext n{\env S}S{\env S}\cong\Ext {n-d+p}SCS\,.
\]
We have just proved $\Ass S\cap\Supp_S\Ext pPSP\ne\varnothing$, so
Theorem~\ref{thm:bigrade}(2) implies
\begin{equation}
  \label{eq:defect}
  \bigrade\sigma=d-p\,.
\end{equation}

Now we can prove \eqref{eq:cm}.  Pick $\fn$ in $\Spec S$.
The induced homomorphisms $K_{\fn\cap K}\to P_{\fn\cap P}\to S_\fn$
provide a smooth{-}by{-}surjective factorization of relative dimension $d$
of $\sigma_\fn\col K_{\fn\cap K}\to S_\fn$.  We have already proved
that $\sigma_\fn$ is {Cohen-Macaulay}, and $\pd_{P_{\fn\cap P}}S_\fn=p$
holds, so we get $\bigrade\sigma=\bigrade[\fn]\sigma$ from formula
\eqref{eq:defect} applied to $\sigma_\fn$.  This proves the first equality.
Theorem~\ref{thm:bigrade} gives the second one.

(3) When  $K$ is Gorenstein the rings $P$ and $S$ are Gorenstein
and {Cohen-Macaulay}, respectively, as they are flat over $K$ with
fibers of the corresponding type.  For $\fn\in\Spec S$ and $\fm=\fn\cap P$
Theorem~\ref{defs:cm2}(1) gives $\pd_{P_\fm}S_\fn=p$ and 
\eqref{eq:supports1} gives $C_\fn\cong\Ext p{P_{\fm}}{S_\fn}{P_{\fm}}$, so
$C_\fn$ is a canonical module for $S_\fn$ by \cite[(3.3.7)]{BH}, .
 \end{proof}

\section{Gorenstein homomorphisms}
\label{Gorenstein homomorphisms}

Combining earlier results we get a `structure theorem' for Gorenstein 
algebras.

\begin{theorem}
\label{gorenstein-factorization}
If $K$ is a noetherian ring and $\sigma\col K\to S$ is a Gorenstein 
homomorphism essentially of finite type, then for some $q\ge0$ one has
  \[
\{n\in\BZ\mid\Ext n{\env S}S{\env S}\ne0\}=\{n_1,\cdots,n_q\}\,.
  \]
Furthermore, there is an isomorphism of $K$-algebras
  \[
S\cong\prod_{i=1}^qS_i
 \quad\text{with}\quad
S_i=S/\ann\big(\Ext{n_i}{\env S}S{\env S}\big)\,,
  \]
for each $i$ the map $K\to S\to S_i$ is Gorenstein, $n_i=\rtd{\sigma}{\fq}$ for every
$\fq$ in $\Min S_i$, the $S_i$-module $\Ext{n_i}{\envv Si}{S_i}{\envv Si}$ is 
invertible, and $\Ext{n}{\envv Si}{S_i}{\envv Si}=0$ for $n\ne n_i$.
 \end{theorem}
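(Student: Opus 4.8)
The plan is to derive the whole statement by feeding the invertibility criterion of Theorem~\ref{diagonal:gor} into the structural results developed above; the proof is essentially an assembly, the only real work being the bookkeeping that matches the pieces up. A Gorenstein homomorphism is in particular flat and essentially of finite type, so Theorem~\ref{diagonal:gor} applies and the graded $S$-module $E:=\bigoplus_{n\in\BZ}\Ext n{\env S}S{\env S}$ is invertible. I would then invoke Lemma~\ref{graded-invertibles}, which shows at once that $\{n\mid E^n\ne0\}$ is a finite set $\{n_1,\dots,n_q\}$ --- this is the first displayed equality --- and that $S=\bigoplus_{i=1}^qJ_i$ as $S$-modules, where $J_i=\ann_S(\bigoplus_{j\ne i}E^{n_j})$. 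Using $\ann_S E=0$ (valid because $E$ is locally free of rank one) together with this splitting, a short computation identifies $\ann_S(E^{n_i})$ with $\bigoplus_{j\ne i}J_j$; hence $S_i:=S/\ann_S(E^{n_i})$ is a ring direct factor of $S$, the module splitting upgrades to an isomorphism of $K$-algebras $S\cong\prod_{i=1}^qS_i$, and $\Spec S_i=\Supp_S E^{n_i}$. By the proof of Lemma~\ref{graded-invertibles} these clopen subsets partition $\Spec S$.

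Next I would pass to each factor. Remark~\ref{tools2}, applied to the splitting $S=S_i\times\prod_{j\ne i}S_j$, gives isomorphisms of $S_i$-modules $\Ext n{\envv Si}{S_i}{\envv Si}\cong S_i\otimes_S E^n$ for all $n$. Since $\Supp_S(S_i\otimes_S E^n)=\Spec S_i\cap\Supp_S E^n$ and the supports $\Supp_S E^{n_j}$ are pairwise disjoint, this module vanishes for $n\ne n_i$; for $n=n_i$ it equals $E^{n_i}$, which is invertible over $S_i$ because it is free of rank one at every prime of $\Spec S_i=\Supp_S E^{n_i}$. This gives the invertibility of $\Ext{n_i}{\envv Si}{S_i}{\envv Si}$ and the vanishing $\Ext n{\envv Si}{S_i}{\envv Si}=0$ for $n\ne n_i$. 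That each composite $K\to S\to S_i$ is Gorenstein is then immediate: $S\to S_i$ inverts the idempotent that cuts out the factor $S_i$, hence is a localization, so the open immersion $\Spec S_i\hookrightarrow\Spec S$ identifies $(S_i)_\fn$ with $S_\fn$ for each $\fn\in\Spec S_i$, and Gorensteinness of $\sigma$ at $\fn$ is a condition on $S_\fn$ alone.

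Finally, to identify $n_i$ with the residual transcendence degree, I would fix an essentially smooth{-}by{-}surjective factorization $K\to P\to S$ of some relative dimension, so that Theorem~\ref{thm:bigrade}(1) applies to $\sigma$. Since $\Spec S_i$ is clopen in $\Spec S$ one has $\Min S_i\subseteq\Min S$, so for $\fq\in\Min S_i$ Theorem~\ref{thm:bigrade}(1) gives $\bigrade[\fq]\sigma=\rtd{\sigma}{\fq}$. On the other hand Lemma~\ref{tools} gives $\Ext n{\env{(S_\fq)}}{S_\fq}{\env{(S_\fq)}}\cong(E^n)_\fq$, which is nonzero precisely for $n=n_i$ because $\fq\in\Supp_S E^{n_i}$ and $\fq\notin\Supp_S E^{n_j}$ for $j\ne i$; hence $\bigrade[\fq]\sigma=n_i$, and therefore $\rtd{\sigma}{\fq}=n_i$. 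I expect the main obstacle to be not any single deep ingredient but the bookkeeping in the first two steps: confirming that the $S$-module splitting produced by Lemma~\ref{graded-invertibles} is precisely the ring decomposition with factors $S_i=S/\ann(\Ext{n_i}{\env S}S{\env S})$ named in the statement, and that base change to $S_i$ --- via Remark~\ref{tools2}, and its localization at $\fq$ via Lemma~\ref{tools} --- genuinely collapses the graded Hochschild cohomology onto its single surviving degree.
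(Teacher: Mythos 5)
Your proposal is correct and follows the same skeleton as the paper's proof: Theorem~\ref{diagonal:gor} gives invertibility of the graded module $\Ext{}{\env S}S{\env S}$, Lemma~\ref{graded-invertibles} yields the finiteness of $\{n_1,\dots,n_q\}$ and the ring decomposition, and Remark~\ref{tools2} collapses the cohomology of each factor onto the single degree $n_i$. You diverge from the paper in two of the subsidiary claims, both legitimately. For the Gorenstein property of $K\to S\to S_i$ you argue directly that $S\to S_i$ is a localization, so the condition is checked prime by prime on $S_\fn$; the paper instead reapplies Theorem~\ref{diagonal:gor}(v) to the invertible module $\Ext{}{\envv Si}{S_i}{\envv Si}$. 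More substantively, for $n_i=\rtd{\sigma}{\fq}$ the paper routes through the Cohen--Macaulay machinery of Theorem~\ref{big:cm} (equation~\eqref{eq:cm}), whereas you use only Theorem~\ref{thm:bigrade}(1) together with the localization isomorphism of~\ref{tools}, reading off $\bigrade[\fq]{\sigma}=n_i$ from the disjointness of the supports $\Supp_S\Ext{n_j}{\env S}S{\env S}$. Your route is slightly more economical, since it avoids invoking Theorem~\ref{big:cm} (whose ambient hypotheses include a connected spectrum, which the individual $\Spec S_i$ need not satisfy and which the paper handles by deferring to Remark~\ref{tools2}); the paper's route has the advantage of exhibiting the link with the canonical-module description of Section~4 that is exploited in Theorem~\ref{portmanteau:Gor}.
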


\begin{proof}
The graded $S$-module $\Ext{}{\env S}S{\env S}$ is invertible by
Theorem~\ref{diagonal:gor}. In particular, $\Ext n{\env S}S {\env S}$
is not zero for finitely many values of $n$, say $n_1,\dots,n_q$.
Lemma~\ref{graded-invertibles} provides the decomposition of $S$ and
Remark \ref{tools2} an equivariant isomorphism
  \[
\Ext{}{\env S}S{\env S}\cong \bigoplus_{i=1}^q\Ext{}{\envv Si}{S_i}{\envv
Si}
  \]
of graded modules.  It implies that the $S_i$-module $\Ext{n}{\envv
Si}{S_i}{\envv Si}$ is invertible for each $n\in\{n_1,\cdots,n_q\}$
and is zero otherwise.  Theorem~\ref{diagonal:gor} now shows that
every composition $K\to S\to S_i$ is Gorenstein.  In particular, it is
{Cohen-Macaulay}, so Theorem \ref{big:cm} yields $n_i=\rtd{\sigma}{\fq}$
for every $\fq\in\Min S_i$.
  \end{proof}

Next we search for a converse, in the spirit  of the conjecture
in the introduction.

One says that $S$ is \emph{generically Gorenstein} if for each $\fq\in
\Min S$ the ring $S_\fq$ is Gorenstein.  The next result was proved
independently in \cite[(2.1)]{ABS} and \cite[(2.2)]{HH}:

  \begin{chunk}
    \label{tachikawa}
If $S$ is a generically Gorenstein, {Cohen-Macaulay} local ring with
canonical module $C$, and $\Ext n{S}{C}{S}=0$ holds for $1\le n \le \dim
S$, then $S$ is Gorenstein.
  \end{chunk}

When $K$ is a Gorenstein ring, a homomorphism $\sigma\col K\to  S$ with
$S$ noetherian is Gorenstein if and only if the ring $S$ is Gorenstein;
see \cite[(23.4)]{Ma}.  Thus, the result below is a reformulation of
Theorem~\ref{iGorenstein} from the introduction.

\begin{theorem}
\label{portmanteau:Gor}
Let $K$ be a Gorenstein ring, $S$ a {Cohen-Macaulay} ring with connected
spectrum and $K\to S$ a flat homomorphism essentially of finite type.

If $S$ is generically Gorenstein, and for some minimal prime ideal $\fq$
of $S$ one has \[
      \Ext{n}{\env S}S{\env S}=0\quad\text{for}\quad \rtd\sigma\fq<n\le
      \rtd\sigma\fq+\dim S\,,
\] then the ring $S$ is Gorenstein.  \end{theorem}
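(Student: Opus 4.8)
The plan is to reduce everything to the local criterion \ref{tachikawa}, applied to the Cohen-Macaulay local rings $S_\fn$ equipped with the canonical modules furnished by Theorem~\ref{big:cm}(3). Fix the essentially smooth-by-surjective factorization $K\to P\to S$ of relative dimension $d$ from \ref{setup:cm}, set $p=\pd_PS$ and $C=\Ext pPS{{\ts\bwedge}^{d}_P\Omega_{P\var K}}$. Since $\sigma$ is Cohen-Macaulay, Theorem~\ref{big:cm} applies: one has $b=\bigrade\sigma=d-p$, $b=\rtd\sigma\fq$ for every $\fq\in\Min S$ (so the vanishing hypothesis reads $\Ext n{\env S}S{\env S}=0$ for $b<n\le b+\dim S$), and for each $\fn\in\Spec S$ the module $C_\fn$ is a canonical module for $S_\fn$ because $K$ is Gorenstein. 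The reduction formula in Theorem~\ref{big:cm}(2), namely $\Ext n{\env S}S{\env S}\cong\Ext{n-b}SCS$, then translates the hypothesis into $\Ext mSCS=0$ for $0<m\le\dim S$.

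First I would localize this Ext-vanishing. For $\fn\in\Spec S$ the $S$-module $\Ext mSCS$ localizes to $\Ext m{S_\fn}{C_\fn}{S_\fn}$ (both $S$ and $C$ are finite over the noetherian ring $S$), so $\Ext m{S_\fn}{C_\fn}{S_\fn}=0$ for $0<m\le\dim S$, hence in particular for $0<m\le\dim S_\fn$. Next, for each $\fn$ the local ring $S_\fn$ is Cohen-Macaulay (since $S$ is), it is generically Gorenstein because every minimal prime of $S_\fn$ is a minimal prime of $S$ and $S$ is generically Gorenstein, and it has $C_\fn$ as a canonical module. Thus \ref{tachikawa} applies and gives that $S_\fn$ is Gorenstein. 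As this holds at every prime (equivalently, every maximal) ideal, the ring $S$ is Gorenstein.

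The one genuine subtlety is bookkeeping the shift: I must check that the vanishing range $b<n\le b+\dim S$ in the hypothesis is exactly what the isomorphism $\Ext n{\env S}S{\env S}\cong\Ext{n-b}SCS$ needs in order to kill $\Ext mSCS$ for $1\le m\le\dim S$, and that the hypothesis is really independent of the choice of $\fq\in\Min S$ — this is precisely the content of the equalities \eqref{eq:cm} in Theorem~\ref{big:cm}, which require $\Spec S$ connected and $\sigma$ Cohen-Macaulay, both in force here. A second small point is the passage from ``$S_\fn$ Gorenstein for all $\fn$'' to ``$S$ Gorenstein'': this is the definition of the Gorenstein property for the (not necessarily local) noetherian ring $S$, so nothing is needed beyond it. The main obstacle, then, is not any deep argument but assembling the correct hypotheses of \ref{tachikawa} at each localization — especially verifying that $C_\fn$ is a canonical module, which is exactly Theorem~\ref{big:cm}(3) and is where the hypothesis that $K$ is Gorenstein is used.
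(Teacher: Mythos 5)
Your proposal is correct and follows essentially the same route as the paper: Cohen--Macaulayness of $\sigma$, the canonical modules $C_\fn$ from Theorem~\ref{big:cm}(3), the translation of the vanishing hypothesis via Theorem~\ref{big:cm}(2) and \eqref{eq:cm}, and then \ref{tachikawa} at each localization. The only step you assert without justification is that $\sigma$ is Cohen--Macaulay; the paper supplies this from flatness and the Cohen--Macaulayness of $S$ via \cite[(23.3), Cor.]{Ma}, and this is a one-line fix rather than a gap.
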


\begin{proof}
Note that $\sigma$ is {Cohen-Macaulay} because it is flat and its
target is a {Cohen-Macaulay} ring; see \cite[(23.3), Cor.]{Ma}.
Theorem~\ref{big:cm}(3) then yields a finite $S$-module $C$, such
that $C_\fn$ is a canonical module for the {Cohen-Macaulay} local
ring $S_\fn$ for each $\fn\in\Spec S$.  Theorem~\ref{big:cm}(2) and
formula~\eqref{eq:cm} translate our hypothesis into equalities $\Ext
n{S_\fn}{C_\fn}{S_\fn}=0$ for $1\le n \le \dim S$.  It remains to invoke
\ref{tachikawa}.
  \end{proof}

It is instructive to compare the criterion that we just proved with the one
afforded by Theorem \ref{defs:cm2}:  The ring $S$ is Gorenstein if for
some essentially smooth{-}by{-}surjective factorization  $K\to P\to S$ of 
$\sigma$ of relative dimension $d$ the $S$-module  $\Ext nPSP$ is zero 
for $\grade_PS< n\le d$ and is projective for $n=\grade_PS$.

On the face of it, the difference lies only in the condition on the
$S$-module structure of a single, finitely generated  module $\Ext nPSP$.
However, this structure is induced through an inherently infinite
construction.  We elaborate below:

  \begin{remark}
    \label{action}
The action of any single \emph{element} $s\in S$ on every $\Ext iPSP$ 
can be computed from a free resolution of $S$ over $P$, as the result 
of applying the map
  \[
\CH i{\Hom R{s\id^F}P}\col\CH i{\Hom RFP}\lra\CH i{\Hom RFP}\,.
  \]
Each $F_j$ can be chosen finite free, so to compute such a map it
suffices to know the $i+1$ matrices with elements in $P$, describing
the differentials $\dd_j^F$ for $j=1,\dots, i+1$.

However, the $S$-module structure of $\Ext iPSP$ comes through an
isomorphism $\Ext iPSP\cong\CH i{\Hom PSI}$, where $I$ is an injective
resolution of $P$ over itself.  The information needed to construct
$I$ is not finite in two distinct ways: (1) the module $I^i$ is a direct 
sum of injective envelopes of $P/\fm$ for \emph{every} $\fm\in\Spec S$ 
of $P$ with $\Ext iP{P/\fm}P_\fm\ne0$, and for $1\le i<\dim P$ \emph{infinitely 
many} distinct $\fm$ satisfy this property; (2) injective envelopes are 
\emph{not finitely generated}, unless $\fm$ is minimal.
  \end{remark}

To finish, we take another look at Theorems~\ref{diagonal:gor} and 
\ref{portmanteau:Gor} this time against backdrops provided by results 
coming from three different directions: 

\noindent$\star$
Characterizations of regular local rings $(R,\fm,k)$ by the 
finiteness of $\pd_Rk$, or the vanishing of $\Ext nRkk$ for all 
(respectively, for some) $n>\dim R$;
see \cite{Ma}.

\noindent$\star$
Characterizations of Gorenstein local rings $(R,\fm,k)$ by the 
finiteness of $\id_RR$, or the vanishing of $\Ext nRkR$ for all 
(respectively, for some) $n>\depth R$; see \cite{Ma}.

\noindent$\star$
Characterizations of smooth $K$-algebras $S$ essentially of 
finite type by the finiteness of $\pd_{\env S}S$, or the vanishing 
of $\Ext n{\env S}SS$ for all $n\geq m$ (respectively, for $n\in[m, 
m+\dim S]$) when $\Omega_{S|K}$ can be generated by 
$m-1$ elements; see \cite{AI:mrl}.

Comparing the statements of these results one will
observe that the homological properties of a flat algebra of
finite type, viewed as a bimodule over itself, encode information about
all its fibers, and that the code is similar to the one that translates
properties of a local ring into homological data on its residue field.

This is the reasoning behind the conjecture stated in the introduction.

\end{document}